\documentclass[11pt]{amsart}

\usepackage[utf8]{inputenc}
\usepackage[T1]{fontenc}

\usepackage{amsmath,amsthm,esint}
\usepackage{caption}
\usepackage{mathtools}
\usepackage{cite}
\usepackage{enumitem}
\usepackage{color}
\usepackage{url}
\usepackage[usenames,dvipsnames]{xcolor}
\usepackage{csquotes}
\usepackage[UKenglish,USenglish]{babel}
\usepackage{graphicx}
\usepackage{xspace}
\usepackage{hyperref}
\usepackage{cleveref}
\usepackage{leftidx}
\usepackage[margin=1.5in]{geometry}

\usepackage[curve]{xypic}


\usepackage{comment}

\usepackage{mathrsfs}


\newcommand{\gab }{h_{a,b}}
\usepackage{hyperref}
\usepackage{color}
\definecolor{darkred}{rgb}{0.4,0,0}
\definecolor{darkgreen}{rgb}{0,0.5,0}
\definecolor{darkblue}{rgb}{0,0,0.4}
\hypersetup{
	pdftitle={PDFTitle},		
	pdfauthor={PY},	
	pdfsubject={math},		
	pdfkeywords={equidistribution, homogeneous space, Dirichlet improvability},	
	pdfnewwindow=true,		
	colorlinks=true,		
	linkcolor=darkblue,		
	citecolor=darkred,		
	filecolor=darkblue,		
	urlcolor=darkblue,		
	pdfborder={0 0 0},
	breaklinks=true
}


\newcommand{\GL}{\operatorname{GL}}
\newcommand{\SL}{\operatorname{SL}}
\newcommand{\SO}{\operatorname{SO}}

\newcommand{\Stab}{\operatorname{Stab}}

\usepackage{amssymb}

\makeatletter
\renewcommand{\paragraph}{%
	\@startsection {paragraph}{4}
	{\z@} \z@ {-\fontdimen 2\font }\bfseries
}
\makeatother


\makeatletter 
\def\@cite#1#2{{\m@th\upshape\bfseries%
		[{#1\if@tempswa{\m@th\upshape\mdseries, #2}\fi}]}}
\makeatother 



\numberwithin{equation}{section}

\theoremstyle{plain}
\newtheorem{thm}{Theorem}[section]
\newtheorem{cor}[thm]{Corollary}
\newtheorem{prop}[thm]{Proposition}
\newtheorem{lem}[thm]{Lemma}

\theoremstyle{definition}

\theoremstyle{remark}

\captionsetup{figurewithin=section}


\newcommand{\nc}{\newcommand}
\newcommand{\rnc}{\renewcommand}
\newcommand{\lip}{\left\langle}
\newcommand{\rip}{\right\rangle}
\newcommand{\ann}[1]{}
\newcommand{\eb}[1]{\emph{\textbf{#1}}}


\renewenvironment{pmatrix}{\left(\begin{smallmatrix}}{\end{smallmatrix}\right)}

\nc\bfA{\mathbf{A}}
\nc\bfB{\mathbf{B}}
\nc\bfC{\mathbf{C}}
\nc\bfD{\mathbf{D}}
\nc\bfE{\mathbf{E}}
\nc\bfF{\mathbf{F}}
\nc\bfG{\mathbf{G}}
\nc\bfH{\mathbf{H}}
\nc\bfI{\mathbf{I}}
\nc\bfJ{\mathbf{J}} 
\nc\bfK{\mathbf{K}}
\nc\bfL{\mathbf{L}}
\nc\bfM{\mathbf{M}}
\nc\bfN{\mathbf{N}}
\nc\bfO{\mathbf{O}}
\nc\bfP{\mathbf{P}}
\nc\bfQ{\mathbf{Q}}
\nc\bfR{\mathbf{R}}
\nc\bfS{\mathbf{S}}
\nc\bfT{\mathbf{T}}
\nc\bfU{\mathbf{U}}
\nc\bfV{\mathbf{V}}
\nc\bfW{\mathbf{W}}
\nc\bfY{\mathbf{Y}}
\nc\bfX{\mathbf{X}}
\nc\bfZ{\mathbf{Z}}
\nc\bfp{\mathbf{p}}
\nc\p{\mathbf{p}}
\nc\q{\mathbf{q}}
\nc\x{\mathbf{x}}

\nc\bbA{\mathbb{A}}
\nc\bbB{\mathbb{B}}
\nc\bbC{\mathbb{C}}
\nc\bbD{\mathbb{D}}
\nc\bbE{\mathbb{E}}
\nc\bbF{\mathbb{F}}
\nc\bbG{\mathbb{G}}
\nc\bbH{\mathbb{H}}
\nc\bbI{\mathbb{I}}
\nc{\bbJ}{\mathbb{J}} 
\nc\bbK{\mathbb{K}}
\nc\bbL{\mathbb{L}}
\nc\bbM{\mathbb{M}}
\nc\bbN{\mathbb{N}}
\nc\bbO{\mathbb{O}}
\nc\bbP{\mathbb{P}}
\nc\bbQ{\mathbb{Q}}
\nc\bbR{\mathbb{R}}
\nc\R{\mathbb{R}}
\nc\bbS{\mathbb{S}}
\nc\bbT{\mathbb{T}}
\nc\bbU{\mathbb{U}}
\nc\bbV{\mathbb{V}}
\nc\bbW{\mathbb{W}}
\nc\bbY{\mathbb{Y}}
\nc\bbX{\mathbb{X}}
\nc\bbZ{\mathbb{Z}}
\nc\Z{\mathbb{Z}}
\nc\cA{\mathcal{A}}
\nc\cB{\mathcal{B}}
\nc\cC{\mathcal{C}}
\rnc\cD{\mathcal{D}}
\nc\cE{\mathcal{E}}
\nc\cF{\mathcal{F}}
\nc\cG{\mathcal{G}}
\rnc\cH{\mathcal{H}}
\nc\cI{\mathcal{I}}
\nc{\cJ}{\mathcal{J}} 
\nc\cK{\mathcal{K}}
\rnc\cL{\mathcal{L}}
\nc\cM{\mathcal{M}}
\nc\cN{\mathcal{N}}
\nc\cO{\mathcal{O}}
\nc\cP{\mathcal{P}}
\nc\cQ{\mathcal{Q}}
\rnc\cR{\mathcal{R}}
\nc\cS{\mathcal{S}}
\nc\cT{\mathcal{T}}
\nc\cU{\mathcal{U}}
\nc\cV{\mathcal{V}}
\nc\cW{\mathcal{W}}
\nc\cY{\mathcal{Y}}
\nc\cX{\mathcal{X}}
\nc\cZ{\mathcal{Z}}
\nc\ff{\mathfrak{f}}
\nc\fg{\mathfrak{g}}
\nc\fh{\mathfrak{h}}

\DeclarePairedDelimiter\abs{\lvert}{\rvert}%
\DeclarePairedDelimiter\norm{\lVert}{\rVert}%

\makeatletter
\let\oldabs\abs
\def\abs{\@ifstar{\oldabs}{\oldabs*}}
\let\oldnorm\norm
\def\norm{\@ifstar{\oldnorm}{\oldnorm*}}
\makeatother

\nc\Weyl{\cW}
\nc\nba{\lip\beta,\alpha\rip}
\nc\nla{\lip\lambda,\alpha\rip}
\nc\nlia{\lip\lambda_i,\alpha\rip}
\nc\nlib{\lip\lambda_i,\beta\rip}
\nc\nsblia{\lip\sigma_\beta(\lambda_i),\alpha\rip}
\nc\typeA{A}
\nc\typeB{B}
\nc\typeC{C}
\nc\typeD{D}
\nc\typeE{E}
\nc\typeF{F}
\nc\typeG{G}
\nc{\charS}{X^{\ast}(S)}
\nc{\cocharS}{X_{\ast}(S)}
\nc{\Gm}{\mathbb{G}_m}
\nc{\der}{\mathrm{d}}
\nc{\Tan}{{T}}
\nc{\gM}{g_{\cM}}
\nc{\Qbar}{\overline{\bbQ}}
\nc{\bfHrm}{\bfH_{r,m}}
\nc{\bfTrm}{\bfT_{r,m}}
\nc{\bfNrm}{\bfN_{r,m}}
\nc{\Wrm}{\cW_{r,m}}
\nc{\Wtwo}{\cW_2}
\nc{\Wtwoo}{\cW_2^{+}}
\nc{\Wtwooo}{\cW_2'}
\nc{\eps}{\varepsilon}
\nc{\cone}{\cR}

\nc\oGamma{\overline{\Gamma}}

\nc{\dmo}{\DeclareMathOperator}
\rnc{\Re}{\operatorname{Re}}
\rnc{\Im}{\operatorname{Im}}

\nc{\spn}{\operatorname{span}}

\dmo{\rank}{rank}
\dmo{\diag}{diag}
\dmo{\End}{End}
\dmo{\Lie}{Lie}
\dmo{\Ima}{Im}

\dmo{\Jac}{Jac}
\dmo{\Id}{Id}
\dmo{\Ann}{Ann}
\dmo{\Area}{Area}
\dmo{\Isom}{Isom}
\dmo{\Tran}{Tran}
\dmo{\Gal}{Gal}

\title[Equidistribution and Dirichlet-improvable vectors]{Equidistribution in the space of 3-lattices and Dirichlet-improvable vectors on planar lines} 

%
\author[Kleinbock]{Dmitry~Kleinbock}
\address{Brandeis University, Goldsmith 207, Waltham, MA 02454-9110}
\email{kleinboc@brandeis.edu}
\author[de Saxc\'e]{Nicolas~de Saxc\'e}
\address{CNRS, Université Paris-Nord 13, Paris, France}
\email{desaxce@math.univ-paris13.fr}
\author[Shah]{Nimish~A.~Shah}
\address{The Ohio State University, Columbus, OH 43210}
\email{shah@math.osu.edu}
\author[Yang]{Pengyu~Yang}
\address{Department of Mathematics, ETH Zürich, Zürich, Switzerland}
\email{pengyu.yang@math.ethz.ch}

\thanks{The first-named author was supported by NSF grant  DMS-1900560. The third-named author was supported by NSF grant DMS-1700394.}

\begin{document}
	\begin{abstract}
		Let $X=\SL_3(\bbR)/\SL_3(\bbZ)$, and $g_t=\diag(e^{2t}, e^{-t}, e^{-t})$. Let $\nu$ denote the push-forward of the normalized Lebesgue measure on a segment of a straight line in the expanding horosphere of $\{g_t\}_{t>0}$, under the map $h\mapsto h\SL_3(\bbZ)$ from $\SL_3(\bbR)$ to $X$. We give explicit necessary and sufficient Diophantine conditions on the line for equidistribution of each of the following families of measures on $X$:	
		
		    (1) $g_t$-translates of $\nu$ as $t\to\infty$.
			
			(2) averages of $g_t$-translates of $\nu$ over $t\in[0,T]$ as $T\to\infty$.
			
			(3) $g_{t_i}$-translates of $\nu$ for some $t_i\to\infty$.

		We apply this dynamical result to show that {Lebesgue-almost every point on the planar line $y=ax+b$ is not Dirichlet-improvable if and only  if $(a,b)\notin\bbQ^2$}.	\end{abstract}
	
	\subjclass[2010]{Primary 37A17, 11J83; Secondary 22E46, 14L24, 11J13}
	\keywords{Homogeneous dynamics, Dirichlet-improvable vectors, equidistribution, Diophantine approximation}
	\maketitle

	\setcounter{tocdepth}{1}

	\section{Introduction}
	
	\subsection{Equidistribution of expanding translates of curves}
	Let {$G=\SL_{n+1}(\bbR)$, $\Gamma=\SL_{n+1}(\bbZ)$} and $X=G/\Gamma$. Let $\mu_{X}$ denote the unique $G$-invariant probability measure on $X$. Let {$g_t=\diag(e^{nt}, e^{-t}, \dots,e^{-t})$}, so that the expanding horospherical subgroup {of $G$}  associated to $g_1$ is 
	\[U^+=\{g\in G : g_{-t}gg_t\to e, t\to{+}\infty \}=
	\left\{\begin{pmatrix}
			1 & * & \cdots & * \\
			& 1 & & \\
			&  & \ddots & \\
			& &  & 1
			\end{pmatrix}\right\} \cong \bbR^{n}.
			\]
	Let $x_0=e\Gamma\in X$. Using the Margulis thickening method {(see e.g.\ \cite{KM96})}, 
	one can show that the $g_t$-translates of the horosphere $U^+x_0$ get equidistributed in $X$. One may ask what happens if we replace the whole horosphere with a bounded piece of a real-analytic
	{submanifold} therein. We note that $X$ can be identified with the space of unimodular lattices in $\R^{n+1}$, hence numerous applications of dynamics on this space to Diophantine approximation, see e.g.\ \cite{Dan85, KM98, KW08}.
	
	If the analytic submanifold is \emph{non-degenerate}, i.e.\ {is not contained in a proper affine subspace},
	then a result of the third named author in \cite{Sha09Invention} tells us that equidistribution still holds. See also \cite{SY18} for a generalization to differentiable submanifolds. It is thus a natural question to ask for conditions for equidistribution of degenerate submanifolds, such as proper affine subspaces of $U^+$. One expects  these conditions to be expressed in terms of Diophantine properties of the affine subspaces.
	
	The main goal of this article is to {give a complete solution to this problem in the case $n = 2$, that is,} 
	study the case of straight lines {in $\bbR^2$. In this case we will show} that the dynamics is completely controlled by  {Diophantine conditions on} the parameters of the straight line, and will give criteria for different types of equidistribution phenomena.
	
	In what follows we will specialize to $n=2$; that is, let $G=\SL_{3}(\bbR)$, $\Gamma=\SL_{3}(\bbZ)$, and 
		$g_t=\diag(e^{2t}, e^{-t},e^{-t})$, so that the expanding horospherical subgroup {of $G$} associated to $g_1$ is 
		\[U^+=
		\left\{\begin{pmatrix}
			1 & * 
			& * \\
			& 1 &  \\
			&  & 1
			\end{pmatrix}\right\}\cong \bbR^2.
			\]
		As before, we let $x_0=e\Gamma\in X =G/\Gamma$. Note that, under the identification of $X$ with the space of unimodular lattices in $\mathbb{R}^{3}$, $x_0 $ corresponds to the standard lattice $\mathbb{Z}^3\subset \mathbb{R}^3$.

	Let $\Wtwo$ denote the set of vectors $(a,b)\in\bbR^2$ for which there exists $C>0$ such that the system of inequalities%
\footnote{Here and throughout the paper, the notation with a brace and several inequalities is used to indicate that \emph{all} of the inequalities hold simultaneously.}
	\begin{equation}\label{eq:Wtwo}
	\begin{cases}
	\abs{qb+p_1}\leq C\abs{q}^{-2} \\
	\abs{qa+p_2}\leq C\abs{q}^{-2}
	\end{cases}
	\end{equation} 
	has infinitely many solutions $(p_1,p_2;q)\in\bbZ^2\times\bbN$.

	Similarly, let $\Wtwooo$ denote the set of vectors for which \eqref{eq:Wtwo} has a non-zero solution $(p_1,p_2;q)\in\bbZ^2\times\bbN$ for every $C>0$.
	
\subsubsection{Remark} \label{rem:W2W2'}One has an obvious inclusion $\Wtwooo \subset \Wtwo$.
	It can be deduced from \cite[Theorem 1.3]{Roy} that this inclusion is strict, even though both sets have Hausdorff dimension equal to $1$, see \cite{Dod92}.
	
\bigskip	
Throughout the paper, $I=[s_0,s_1]$ denotes an arbitrary compact interval with non-empty interior, i.e. $s_0<s_1$.
	For  $(a,b)\in\bbR^2$, let $\phi_{a,b}:I\to U^+$ be the line segment defined by $$\phi_{a,b}(s)= {\begin{pmatrix}
		1 & s & as+b \\
		& 1 & \\
		& & 1
		\end{pmatrix}}, \ \forall s\in I.$$
	Let $\lambda_{a,b}$ denote the push-forward of the normalized Lebesgue measure on $I$ under the map $s\mapsto\phi_{a,b}(s)x_0$ from $I$ to 
   $X$, and for any $t\in\R$, let $g_t\lambda_{a,b}$ denote the translate of $\lambda_{a,b}$ by $g_t$; that is, for any $f\in C_c(X)$,
	\begin{equation} \label{eq:lambda_ab}
	\int_{X} f\,\mathrm{d}\lambda_{a,b}=\fint_I f(\phi_{a,b}(s)x_0)\,\mathrm{d}s:=\frac{1}{\abs{I}}\int_{I} f(\phi_{a,b}(s)x_0)\,\mathrm{d}s,
	\end{equation}
	\begin{equation}\label{eq:gtlambda_ab}
	\int_{X} f\, \mathrm{d}(g_t\lambda_{a,b})=\fint_I f(g_t\phi_{a,b}(s)x_0)\,\mathrm{d}s,
	\end{equation}
	where $\abs{A}$ is the Lebesgue measure of $A$ for any measurable subset $A$ of $\bbR$.

We say that a family $\{\lambda_i\}_{i\in\cI}$ of probability measures on $X$ has \eb{no escape of mass\/} if for every $\eps>0$ there exists a compact subset $K$ of $X$ such that $\lambda_i(K)>1-\eps$ for all $i\in\cI$.

	We will prove the following criterion for non-escape of mass.
	\begin{thm}\label{thm:main_thm_non_escape_of_mass}
		The translates $\{g_t\lambda_{a,b}\}_{t>0}$ have no escape of mass if and only if 
		\[
		(a,b)\notin \Wtwooo.
		\]
	\end{thm}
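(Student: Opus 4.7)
\medskip

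\noindent\textbf{Proof plan.} The plan is to use Mahler's compactness criterion: the point $g_t \phi_{a,b}(s) x_0$ lies in the $\eta$-thin part $X \setminus K_\eta$ if and only if the lattice $g_t \phi_{a,b}(s) \bbZ^3$ contains a non-zero vector of norm $< \eta$. A direct computation shows
\[
g_t \phi_{a,b}(s) \, (p_1, p_2, q)^T = \bigl(e^{2t}[(p_2+aq) s + (p_1+bq)],\ e^{-t} p_2,\ e^{-t} q\bigr)^T,
\]
and this is the bridge between short lattice vectors and near-solutions of \eqref{eq:Wtwo}.

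For the direction $(a,b) \in \Wtwooo \Rightarrow$ escape of mass, I would pick a sequence $C_k \searrow 0$ and corresponding non-zero solutions $(p_1^{(k)}, p_2^{(k)}, q_k) \in \bbZ^2 \times \bbZ_+$ of \eqref{eq:Wtwo} with constant $C_k$. One verifies that $q_k$ may be taken to $\infty$ (for $(a,b) \in \bbQ^2$ by using multiples of a common-denominator solution; for $(a,b) \notin \bbQ^2$ because on any bounded range of $q$ the quantity $\max(|qb+p_1|, |qa+p_2|)$ has a positive infimum). Setting $t_k$ so that $e^{-t_k} q_k = C_k^{1/3}$, each of the three coordinates of $g_{t_k}\phi_{a,b}(s)(p_1^{(k)}, p_2^{(k)}, q_k)^T$ has absolute value $O(C_k^{1/3})$, uniformly in $s \in I$. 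Hence for any fixed $\eta > 0$ and all large $k$, the entire line $\phi_{a,b}(I)$ is mapped by $g_{t_k}$ into $X \setminus K_\eta$, and so $g_{t_k}\lambda_{a,b}(X \setminus K_\eta) = 1$, a complete escape of mass.

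For the converse, $(a,b) \notin \Wtwooo \Rightarrow$ no escape, I would apply the Kleinbock--Margulis quantitative non-divergence theorem \cite{KM98}. Fix $C_0 > 0$ with $\max(|qb+p_1|, |qa+p_2|) > C_0 q^{-2}$ for every $(p_1, p_2, q) \in \bbZ^2 \times \bbZ_+$. Since the entries of $g_t \phi_{a,b}(s)$ and of $\wedge^2 g_t \phi_{a,b}(s)$ are degree-one polynomials in $s$, the theorem reduces to verifying a uniform lower bound
\[
\sup_{s \in I} \|\wedge^k g_t \phi_{a,b}(s)\, v\| \geq \rho \quad (k = 1, 2)
\]
for every primitive $v \in \wedge^k \bbZ^3$, with $\rho > 0$ independent of $t \geq 0$. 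For $k=1$ and $v = (p_1, p_2, q) \in \bbZ^3$ primitive with $q \neq 0$, the first coordinate of $g_t \phi_{a,b}(s)v$ is linear in $s$ with slope $e^{2t}(p_2+aq)$ and value $e^{2t}(p_1+bq)$ at $s = 0$, so
\[
\sup_{s \in I}\bigl|(g_t\phi_{a,b}(s)v)_1\bigr| \geq c_I e^{2t}\max(|p_2+aq|, |p_1+bq|) > c_I C_0 e^{2t}/q^2,
\]
while the third coordinate equals $e^{-t}|q|$. A weighted geometric-mean inequality yields
\[
\sup_{s \in I} \|g_t \phi_{a,b}(s) v\|^3 \geq \bigl(c_I C_0 e^{2t}/q^2\bigr)(e^{-t}|q|)^2 = c_I C_0,
\]
a $t$-independent lower bound. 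The remaining cases $q = 0$ and $k=2$ give strictly larger bounds by an analogous computation, using the explicit form of $\wedge^2 g_t \phi_{a,b}(s)$ whose expanding entries all carry a factor of $e^t$. Feeding these into Kleinbock--Margulis gives $g_t \lambda_{a,b}(X \setminus K_\eta) \leq C(\eta/\rho)^{\alpha}$ uniformly in $t$, and letting $\eta \to 0$ yields no escape of mass.

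The hard part will be converting the arithmetic hypothesis $(a,b) \notin \Wtwooo$ into the uniform dynamical lower bound $\rho$; the weighted AM-GM step is what couples the expansion $e^{2t}$ in the horospherical direction to the contraction $e^{-t}$ in the transverse direction via the factor $q^{-2}$ in \eqref{eq:Wtwo}. In the opposite direction, the escape construction is mostly a matter of balancing scales and becomes mechanical once the choice $e^{t_k} \sim q_k C_k^{-1/3}$ is made.
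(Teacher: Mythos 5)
Your proof is correct. It covers both implications by concrete, quantitative arguments, and the key estimates check out: the normalization $e^{-t_k}q_k=C_k^{1/3}$ does balance all three coordinates to $O(C_k^{1/3})$ (with $q_k\to\infty$ guaranteed either by the $\bbQ^2$-multiple trick or by the positive infimum on bounded $q$-ranges when $(a,b)\notin\bbQ^2$), and the AM--GM step $\max(A,B)^3\ge AB^2$ with $A=c_IC_0e^{2t}q^{-2}$, $B=e^{-t}q$ correctly produces a $t$-independent lower bound $\rho$ in the standard representation, with the exterior square handled as in the paper's Lemma~\ref{lem:exterior_square}.

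Where you diverge from the paper is in the ``no escape'' direction. The paper argues by contrapositive: it first establishes the Dani--Margulis dichotomy (\Cref{thm:KMnondivergence}), then passes through a diagonal argument (\Cref{prop:consequence_of_divergence}, $(2)\Rightarrow(3)$) to extract a single sequence $t_i$ with $\sup_{s\in I}\|g_{t_i}\phi_{a,b}(s)v_i\|\to 0$, and finally translates this into $(a,b)\in\Wtwooo$ via \Cref{lem:interpretation_of_Wtwooo}. You instead go forward: from $(a,b)\notin\Wtwooo$ you derive the uniform $\rho$ up front and feed it once into the Kleinbock--Margulis quantitative non-divergence theorem, giving a single compact $K_\eta$ with $g_t\lambda_{a,b}(K_\eta)>1-\eps$ for every $t\ge 0$ simultaneously. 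This is cleaner and more self-contained (no contrapositive, no diagonalization, and it bypasses the delicate ``escape of mass along a sequence'' bookkeeping the paper needs). The trade-off is that the paper's lemmas (\Cref{thm:KMnondivergence}, \Cref{prop:consequence_of_divergence}, \Cref{lem:interpretation_of_Wtwooo}) are reused for Theorems~\ref{thm:main_thm_equidistribution}--\ref{thm:main_thm_sequence}, whereas your direct quantitative argument is tailored to the non-escape statement alone. Your escape-of-mass direction is, up to reorganization, the same computation as the paper's \Cref{lem:interpretation_of_Wtwooo} combined with \Cref{prop:consequence_of_divergence} $(3)\Rightarrow(1)$.
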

	
	Furthermore, for $\cI=\bbN$ or $\bbR_{>0}$, we say that a family $\{\lambda_i\}_{i\in\cI}$ of probability measures on $X$ gets \eb{equidistributed} in $X$ if
	\begin{equation*}
	\int f \,\mathrm{d}\lambda_i\stackrel{i\to\infty}{\longrightarrow}\int f\,\mathrm{d}\mu_{X},\;\forall f\in C_c(X);
	\end{equation*}
	that is, $\lambda_i$ converges to $\mu_{X}$ with respect to the weak-* topology as $i\to\infty$.

	\begin{thm}\label{thm:main_thm_equidistribution}
		The translates $\{g_t\lambda_{a,b}\}_{t>0}$ get {equidistributed} in $X$ if and only if 
		\[
		(a,b)\notin \Wtwo.
		\]
	\end{thm}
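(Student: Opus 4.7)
The proof has two directions. For necessity, suppose $(a,b)\in\Wtwo$. If in fact $(a,b)\in\Wtwooo\subset\Wtwo$, then Theorem~\ref{thm:main_thm_non_escape_of_mass} gives escape of mass, precluding equidistribution. Otherwise, let $C>0$ be as in~\eqref{eq:Wtwo} and let $v_q=(p_1,p_2,q)\in\bbZ^3$ range over the infinitely many solutions. Writing $\alpha_q=p_1+bq$ and $\beta_q=p_2+aq$ (so $|\alpha_q|,|\beta_q|\le Cq^{-2}$), a direct computation gives
\[
g_{\log q}\,\phi_{a,b}(s)\,v_q \;=\; \bigl(q^2\alpha_q + s\,q^2\beta_q,\;-a+q^{-1}\beta_q,\;1\bigr)^{\!T},
\]
which is uniformly bounded for $s\in I$ and all such $q$. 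Passing to a subsequence along which $q^2\alpha_q\to A$ and $q^2\beta_q\to B$, this converges uniformly in $s\in I$ to $w(s):=(A+sB,-a,1)$. Any weak-$*$ limit $\mu_\infty$ of $\{g_{\log q}\lambda_{a,b}\}$ is therefore supported on the closed set $Y:=\{L\in X\mid w(s)\in L \text{ for some }s\in I\}$, which is a one-parameter family of codimension-$3$ subvarieties and hence $\mu_X$-null. Thus $\mu_\infty\ne\mu_X$ and $\{g_t\lambda_{a,b}\}$ fails to equidistribute.

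For sufficiency, assume $(a,b)\notin\Wtwo$. Since $\Wtwooo\subset\Wtwo$, Theorem~\ref{thm:main_thm_non_escape_of_mass} yields no escape of mass, so every weak-$*$ subsequential limit $\mu$ of $\{g_t\lambda_{a,b}\}$ is a probability measure on $X$. A standard short-time smoothing argument shows $\mu$ is invariant under the one-parameter unipotent subgroup $U_a=\{\phi_{a,0}(\sigma):\sigma\in\R\}$ tangent to the expanding direction: the substitution $s\mapsto s-\delta e^{-3t}$ in~\eqref{eq:gtlambda_ab} pre-multiplies $g_t\phi_{a,b}(s)$ by $\phi_{a,0}(-\delta)$ (up to a boundary error of size $\delta e^{-3t}$), and uniform continuity of $f\in C_c(X)$ passes the invariance to the limit.

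By Ratner's measure classification, each $U_a$-ergodic component of $\mu$ is an algebraic measure on a closed orbit $H\cdot x$ of some closed connected subgroup $H\supseteq U_a$. To conclude $\mu=\mu_X$ we rule out every proper $H\subsetneq G$ by the Dani--Margulis linearization: a positive $\mu$-component on an $H$-orbit with $H\ne G$ produces, for each $\eta>0$, a positive-measure set $E\subseteq I$, a non-zero integer vector $v\in V_H(\bbZ)$ in an appropriate finite-dimensional $G$-representation $V_H$ (concretely $\bbR^3$ or $\wedge^2\bbR^3$, which exhaust the representations relevant to the proper intermediate subgroups in $\SL_3$), and an unbounded sequence $\{t_i\}$ along which $\|g_{t_i}\phi_{a,b}(s)\cdot v\|<\eta$ for every $s\in E$. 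Unpacking this boundedness in coordinates, exactly as in the necessity argument, yields for $V_H=\bbR^3$ the system~\eqref{eq:Wtwo}, and for $V_H=\wedge^2\bbR^3$ an analogous system also implied by $(a,b)\in\Wtwo$. The hypothesis $(a,b)\notin\Wtwo$ therefore eliminates every proper $H$, forcing $\mu=\mu_X$.

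The principal technical obstacle is the linearization step: one must enumerate the proper closed connected subgroups of $\SL_3(\bbR)$ containing $U_a$ and, for each, translate the boundedness of $g_t\phi_{a,b}(s)\cdot v$ in the corresponding representation into an explicit Diophantine condition on $(a,b)$. The delicate cases are the $\SL_2$-subgroups and parabolics stabilizing a rational 2-plane, where both $a$ and $b$ enter non-trivially via $\wedge^2\bbR^3$; the matching between the eigenvalue structure of $g_t$ on these representations and the exponent $-2$ in~\eqref{eq:Wtwo} is precisely what produces the sharp dichotomy in the statement.
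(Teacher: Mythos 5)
Your necessity direction is essentially correct, and is a slight reformulation of the paper's argument: you identify an explicit $\mu_X$-null closed set $Y$ (a one-parameter family of codimension-$3$ subvarieties) trapping the support of any weak-$*$ limit along $t_i=\log q_i$, whereas the paper splits into two cases (a uniform lower bound on $\|g_{t_i}\phi_{a,b}(s)\gamma_ie_1\|$, giving a non-full closed set of lattices, versus vectors shrinking to zero, giving total escape of mass after a time reshift). Both work; your version is a bit cleaner.

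The sufficiency direction has a genuine gap at the linearization step, and it is precisely the part you flag as ``the principal technical obstacle'' but then do not carry out. After Ratner, the Dani--Margulis linearization (Proposition~\ref{prop:nonfocusing_criterion} in the paper) does \emph{not} directly hand you a bounded integer vector in $\bbR^3$ or $\wedge^2\bbR^3$: it produces, for each candidate $H\in\cH$, bounded vectors $\gamma_ip_H$ in the representation $\bigwedge^{\dim H}\mathfrak g$. Your assertion that $\bbR^3$ and $\wedge^2\bbR^3$ ``exhaust the representations relevant to the proper intermediate subgroups in $\SL_3$'' is the missing content, not a given. In the paper this reduction is Theorem~\ref{prop:consequence_of_linear_focusing}, the paper's main technical result, built from Kempf's instability theory (reduction to a highest-weight vector, Theorem~\ref{prop:reduction_to_eigenvector}), a reduction to fundamental representations (Lemma~\ref{lem:reduction_to_fundamental_rep}), and a separate argument for the case of a Zariski-closed orbit $Gv_0$, which uses the weight diagram of $\mathfrak{sl}_3$ and concludes $(a,b)\in\bbQ^2$ there. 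Moreover your remark that $V_H=\wedge^2\bbR^3$ gives ``an analogous system also implied by $(a,b)\in\Wtwo$'' is not quite right: Lemma~\ref{lem:exterior_square} shows $\wedge^2\bbR^3$ never yields a bounded trajectory at all ($\sup_{s\in I}\|g_t\phi_{a,b}(s)v\|\geq C_Ie^t$ for all nonzero $v\in\wedge^2\bbZ^3$), so that representation imposes \emph{no} Diophantine condition and must simply be ruled out. Without this reduction from $\bigwedge^{\dim H}\mathfrak g$ to the standard representation, the implication ``$(a,b)\notin\Wtwo$ $\Rightarrow$ $\mu=\mu_X$'' is not established.

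Two smaller points: your short-time smoothing argument for $W$-invariance is correct and matches the paper's Proposition~\ref{prop:U-inv}, written with $\phi_{a,0}(-\delta)$ in place of $w(-\delta)$; and in the necessity direction you implicitly use that $q_i\to\infty$ along the $\Wtwo$-solutions, which does need a one-line justification (if $q_i$ had a bounded subsequence then $(a,b)\in\bbQ^2\subset\Wtwooo$, covered by your first case).
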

	
	Since the set $\Wtwo$ has Lebesgue measure $0$ in $\bbR^2$, a typical line gets equidistributed under the flow $g_t$.
	
	Chow and Yang \cite{CY19} proved effective equidistribution for translates of a Diophantine line by diagonal elements near $\diag(e^t, e^{-t}, 1)$. Unfortunately their method does not seem to apply to the flow $g_t=\diag(e^{2t},e^{-t},e^{-t})$ here. We will instead use Ratner's measure rigidity theorem for unipotent flows \cite{Ra91}, and tools from geometric invariant theory.

	\subsection{Averaging over the time parameter}
	Define
	\begin{equation*}
	\Wtwoo=\left\{(a,b)\in\bbR^2 : \limsup_{(p_1,p_2;q)\in\bbZ^2\times\bbN} \frac{-\log \max\{\abs{qb+p_1},\abs{qa+p_2}\}}{\log {q}}> 2 \right\}.
	\end{equation*}
	{In other words}, $\Wtwoo$ consists of vectors $(a,b)\in\bbR^2$ for which there exists $\eps>0$ such that the system of inequalities
	\begin{equation*}\label{eq:Wtwoo1}
	\begin{cases}
	\abs{qb+p_1}\leq {q}^{-(2+\eps)} \\
	\abs{qa+p_2}\leq {q}^{-(2+\eps)}
	\end{cases}
	\end{equation*} 
	has infinitely many solutions $(p_1,p_2;q)\in\bbZ^2\times\bbN$.
	
	\subsubsection{Remark} In view of Remark~\ref{rem:W2W2'}, we have strict inclusions
\[ \Wtwoo\subsetneq\Wtwooo\subsetneq\Wtwo ,\]
	even though all of these sets have Hausdorff dimension equal to $1$ (see \cite{Dod92}). The strictness of the inclusion  $\Wtwoo\subset\Wtwooo$ can be derived from a zero--infinity law for Hausdorff measures of those sets with appropriate dimension functions, see \cite{DickinsonVelani}.
	
	\bigskip
	We are also interested in the limit distributions of the \eb{averages} of $g_t$-translates of $\lambda_{a,b}$, namely the family $\left\{\frac{1}{T}\int_{0}^{T}g_t\lambda_{a,b}\,\mathrm{d}t\right\}_{T>0}$ of probability measures on $X$. 
	Similar questions have been considered in \cite{SW17} and from the measure rigidity point of view in \cite{ES19}.

	\begin{thm}\label{thm:main_thm_average}
		The following are equivalent:
		
		\begin{enumerate}
			\item[\rm (1)] The averages $\{\frac{1}{T}\int_{0}^{T}g_t\lambda_{a,b}\,\mathrm{d}t\}_{T>0}$  get equidistributed in $X$.
			
			\item[\rm (2)] The averages $\{\frac{1}{T}\int_{0}^{T}g_t\lambda_{a,b}\,\mathrm{d}t\}_{T>0}$ have no escape of mass.
			
			\item[\rm (3)]
			$
			(a,b)\notin\Wtwoo.
			$\end{enumerate}
	\end{thm}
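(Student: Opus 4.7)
The implication (1) $\Rightarrow$ (2) is trivial. My plan is to prove (3) $\Rightarrow$ (1) and the contrapositive of (2) $\Rightarrow$ (3); combined with (1) $\Rightarrow$ (2) these three implications yield the full equivalence.

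For (3) $\Rightarrow$ (1), assume $(a,b)\notin\Wtwoo$. I would first upgrade this to no escape of mass for the averages, by applying the Kleinbock--Margulis quantitative non-divergence estimate to the lattices $g_t\phi_{a,b}(s)\bbZ^3$: the mass of $g_t\lambda_{a,b}$ outside the thick part $K_\eta:=\{\Lambda\in X:\text{shortest nonzero vector of }\Lambda\geq\eta\}$ is controlled by the number of $(p_1,p_2,q)\in\bbZ^2\times\bbZ_+$ satisfying \eqref{eq:Wtwoo1}, and after averaging in $t$ the hypothesis $(a,b)\notin\Wtwoo$ converts this into a bound uniform in $T$, establishing (2). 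Now let $\mu$ be any weak-$*$ limit of $\frac{1}{T_k}\int_0^{T_k}g_t\lambda_{a,b}\,\mathrm{d}t$; tightness makes $\mu$ a probability measure, and Ces\`aro cancellation makes $\mu$ invariant under $\{g_t\}_{t\in\bbR}$. The key additional invariance comes from the tangent direction of $\phi_{a,b}$: setting $u(r):=\exp\bigl(r(E_{12}+aE_{13})\bigr)$, the identity $u(r)\phi_{a,b}(s)=\phi_{a,b}(s+r)$ combined with the commutation $u(r)g_t=g_tu(e^{-3t}r)$ yields
\[
u(r)_*g_{t*}\lambda_{a,b}=g_{t*}\,u(e^{-3t}r)_*\lambda_{a,b},
\]
and the right-hand side differs from $g_{t*}\lambda_{a,b}$ only by a boundary shift of size $O(e^{-3t})$; averaging in $t$ and passing to the limit forces $u(r)_*\mu=\mu$ for every $r\in\bbR$. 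By Ratner's measure rigidity, the $u(\bbR)$-ergodic components of $\mu$ are supported on closed $H$-orbits with $u(\bbR)\subseteq H\subseteq G$, and I would run a Dani--Margulis linearization argument in the style of \cite{Sha09Invention} to rule out all proper such $H$: concentration on any proper $H$-orbit would translate to a Diophantine obstruction that, under the time-averaging, reduces precisely to membership in $\Wtwoo$. Hence $\mu=\mu_X$, proving (1).

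For the contrapositive of (2) $\Rightarrow$ (3), assume $(a,b)\in\Wtwoo$, fix $\eps>0$ from the definition, and let $\{(p_1^{(k)},p_2^{(k)},q^{(k)})\}$ be infinitely many solutions of \eqref{eq:Wtwoo1}. A direct computation gives
\[
g_t\phi_{a,b}(s)\begin{pmatrix}p_2\\ p_1\\ q\end{pmatrix}=\begin{pmatrix}e^{2t}\bigl((p_2+bq)+s(p_1+aq)\bigr)\\ e^{-t}p_1\\ e^{-t}q\end{pmatrix}.
\]
Using $|p_1+aq|,|p_2+bq|\leq q^{-(2+\eps)}$, this vector has norm $\leq\eta$ simultaneously for every $s\in I$ whenever $t$ lies in an interval $J_k\subset\bigl[\log(q^{(k)}/\eta),\,(1+\eps/2)\log q^{(k)}+O(1)\bigr]$, whose length is $\asymp(\eps/2)\log q^{(k)}$. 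Choosing $T_k=(1+\eps/2)\log q^{(k)}$ then yields
\[
\Bigl(\tfrac{1}{T_k}\int_0^{T_k}g_t\lambda_{a,b}\,\mathrm{d}t\Bigr)(X\setminus K_\eta)\geq\frac{|J_k|}{T_k}\gtrsim\frac{\eps}{2+\eps},
\]
a positive constant independent of $k$; hence the averages exhibit escape of mass along the sequence $\{T_k\}$, contradicting (2).

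The hard part is the linearization step in the proof of (3) $\Rightarrow$ (1): one must verify that the Ces\`aro average in $t$ exactly bridges the gap between the hypothesis $(a,b)\notin\Wtwo$ required in Theorem~\ref{thm:main_thm_equidistribution} without averaging and the weaker $(a,b)\notin\Wtwoo$ sufficient here. The sharp $q^{-(2+\eps)}$ threshold defining $\Wtwoo$ should emerge naturally from balancing the exponential expansion rate of $g_t$ on tubes around exceptional closed $H$-orbits against the polynomial divergence of $\phi_{a,b}$ in the transverse direction, as smoothed by the $t$-average.
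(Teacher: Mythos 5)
Your overall architecture matches the paper's: (1)$\Rightarrow$(2) is trivial, (2)$\Rightarrow$(3) goes by contraposition via explicit estimates on the solutions of \eqref{eq:Wtwoo1}, and (3)$\Rightarrow$(1) goes through Ratner's theorem and linearization. Your escape-of-mass estimate is correct and essentially reproduces what the paper gets from combining Mahler's criterion with the implication (3)$\Rightarrow$(1) of \Cref{lem:density}, and your boundary-shift computations for the $W$- and $\{g_t\}$-invariance of weak-$*$ limits of Ces\`aro averages are also correct.

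However, for (3)$\Rightarrow$(1) there is a genuine gap which your final paragraph acknowledges but misdiagnoses. The Dani--Margulis linearization produces an obstruction of the form: on a positive-density set of times $t$ there exists $\gamma\in\Gamma$ with $\sup_{s\in I}\norm{g_t\phi_{a,b}(s)\gamma p_H}$ bounded, where $p_H\in\bigwedge^{\dim H}\mathfrak{g}(\bbQ)$ lives in the exterior-power representation attached to the intermediate subgroup $H$ coming out of Ratner's theorem. Converting \emph{that} into the Diophantine statement $(a,b)\in\Wtwoo$ requires translating boundedness in this abstract rational representation into boundedness in the standard representation $\bbZ^3$; this is precisely the central technical content of the paper, occupying Sections~2 through~5 (Kempf's instability theory, reduction to highest-weight vectors and then to fundamental representations, the case split on whether $Gv_0$ is Zariski closed, culminating in \Cref{prop:consequence_of_linear_focusing}). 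Your intuition that the $q^{-(2+\eps)}$ threshold emerges ``from balancing the expansion rate against polynomial divergence'' is not where the difficulty lies: the $\Wtwoo$-versus-$\Wtwo$ distinction is handled by the purely Diophantine \Cref{lem:density} (positive upper density of $\cI_R$, as opposed to unboundedness of $\cI_R$, is what separates the averaged theorem from \Cref{thm:main_thm_equidistribution}), and that lemma is orthogonal to the representation-theoretic reduction you never carry out. The same missing reduction reappears in your proposed Kleinbock--Margulis route to tightness. The paper avoids having to establish tightness first by running (3)$\Rightarrow$(1) as a single contrapositive: if a weak-$*$ limit $\mu$ of the averages is not $\mu_X$ (whether through escape of mass or through singular support), \Cref{prop:Ave-equidistribution} delivers a positive lower density of $\cI_R$, and then \Cref{lem:density} concludes $(a,b)\in\Wtwoo$.
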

	
	\subsubsection{Remark}
		It is shown in \cite{Kle03} that the planar line $\{y=ax+b\}$ is \emph{extremal} if and only if $(a,b)\notin\Wtwoo$.

	\subsection{Equidistribution along a sequence}
	We are also interested in understanding when $\{g_t\lambda_{a,b}\}_{t>0}$ equidistributes along some subsequence $t_i\to\infty$.
	
	\begin{thm}\label{thm:main_thm_sequence}
		Let $(a,b)\in\bbR^2$. Then the following are equivalent:
		\begin{enumerate}
			\item[\rm (1)] $(a,b)\notin\bbQ^2$.
			\item[\rm (2)] 
			The closure of $\{g_t\lambda_{a,b}\}_{t\geq 0}$ contains $\mu_X$ with respect to the weak-* topology.
			\item[\rm (3)]
			For almost every $s\in \bbR$, the trajectory 
			$
			\{ g_t\phi_{a,b}(s)x_0 \}_{t\geq 0}
			$
			is dense in $X$.
		\end{enumerate}
	\end{thm}
	
	\subsubsection{Remark}
		Suppose $(a, b)\in\bbQ^2$, then $g_t\lambda_{a,b}$ will diverge, i.e.\ eventually leave any fixed compact set, see Remark~\ref{rem:Q-diverge}. Hence \Cref{thm:main_thm_sequence} gives us a dichotomy which was somewhat unexpected: the $g_t\lambda_{a,b}$ \emph{either} diverge as $t\to\infty$, \emph{or} get equidistributed along some sequence $t_i\to\infty$.
		
		\subsubsection{Remark} \label{rmk:main_thm_equidistribution_dual}
		We also have dual versions of \Cref{thm:main_thm_non_escape_of_mass}, \Cref{thm:main_thm_equidistribution}, \Cref{thm:main_thm_average} and \Cref{thm:main_thm_sequence} above. Let $\tilde{g_t}=\diag(e^t,e^t,e^{-2t})$, consider the map  $\tilde{\phi}_{a,b}\colon s\mapsto {\begin{pmatrix}
			1 &  & as+b \\
			& 1 & s\\
			& & 1
			\end{pmatrix}}$, and let $\tilde\lambda_{a,b}$ denote the push-forward of the normalized Lebes\-gue measure on $I$ under the map $s\mapsto\tilde\phi_{a,b}(s)x_0$ from $I$ to 
   $X$. 
		Then all the four theorems are still valid for $\tilde{g_t}$ in place of $g_t$ and $\tilde{\lambda}_{a,b}$ in place of $\lambda_{a,b}$. 
		Indeed, it suffices to consider the outer automorphism $g\mapsto w\cdot{}^tg^{-1}\cdot w$ of $G$, where 
		$w={\begin{pmatrix}
			& & 1 \\
			& -1 & \\
			1 & &
			\end{pmatrix}}$; under this automorphism $g_t$ is sent to $\tilde{g_t}$, $\lambda_{a,b}$ is sent to $\tilde{\lambda}_{a,b}$, and $\mu_X$ is preserved. See \cite[Page 511]{Sha09Invention}.

	\subsubsection{Remark} \label{rmk:main_thm_indep_of_I} It is also worthwhile to point out that even though the measures $\lambda_{a,b}$ depend on the choice of   $I = [s_0,s_1]$, the criteria in all the theorems stated above do not; that is, the limiting behavior of these measures is the same for all nontrivial intervals simultaneously. Using the arguments of this article, one can see that for a given sequence $t_i\to\infty$, if $g_{t_i}\lambda_{a,b}\to\mu_X$, then for every finite interval $J$ with nonempty interior, we have $g_{t_i}\lambda_{a,b}^J\to\mu_X$.

	\subsection{Dirichlet-improvable vectors on planar lines}
	The motivation for our study came from Diophantine approximation. {Denote by $\norm{\cdot}$ the supremum norm on $\R^n$ (unless specified otherwise, all the norms on $\bbR^n$ will be taken to be the supremum norm).} {Following} Davenport and Schmidt \cite{DS6970}, for $0<\delta<1$ we say that a vector 
	{$\x\in\R^n$} is \eb{$\delta$-improvable}
	if for every sufficiently large $T$, the system of inequalities
	{\begin{equation*}\label{vect}
		\begin{cases}
		{\|q\x+\p\|}\leq \delta T^{-1} \\
		\abs{q} \leq  T^n
		\end{cases}
		\end{equation*}
		has a solution 
		$(\p,q)$, where $\p\in\Z^n$ and $q\in\Z\setminus\{0\}$}. One says that {$\x$} is \eb{Dirichlet-improvable} if it is $\delta$-improvable for some $0<\delta<1$, {and that it is \eb{singular} if it is $\delta$-improvable for all $0<\delta<1$}.
	
	Similarly, a real linear form {on $\q\in \bbZ^n$ is given by $\q\mapsto \x\cdot\q$, parametrized by $\x\in\bbR^n$}. We say that this linear form is \eb{$\delta$-improvable} if there exists $0<\delta<1$ such that for every sufficiently large $T$, the system of inequalities
	{\begin{equation}\label{lf}
		\begin{cases}
		\abs{\x\cdot\q+p}\leq \delta T^{-n} \\
		\|\q\| \leq  T\end{cases}
		\end{equation}
		has a solution 
		$(p,\q)$, where $p\in\Z$ and $\q\in\Z^n\setminus\{0\}$}. 
	
	The notation {$\mathrm{DI}(n,1)$ and $\mathrm{DI}(1,n)$ (resp., $\mathrm{Sing}(n,1)$ and $\mathrm{Sing}(1,n)$)} is used in the literature to denote the set of Dirichlet-improvable (resp., singular) vectors and linear forms. It is 
	{known that $\mathrm{DI}(n,1)=\mathrm{DI}(1,n)$ and $\mathrm{Sing}(n,1) = \mathrm{Sing}(1,n)$, see \cite{DS70}} and \cite[Chapter V, Theorem XII]{Cassels57} respectively.
	
	The readers who would like to know more background information are referred to \cite{KW08,Sha09Invention} and references therein for Dirichlet-improvable vectors, and \cite{CC16,Dan85} and references therein for singular vectors.
	
	{Now let us again specialize to $n=2$, and take $\x$ of the form $(s,as+b)$. In the simplest possible case $(a,b)\in\bbQ^2$ it is very easy to see that every point on the planar line \[
		L_{a,b}=\{ (x,y)\in\bbR^2 : y = ax+b \}
		\] 
		is singular: indeed, take $a = k/m$ and $b = \ell/m$ and notice that one has $$(x,y)\cdot (-k,m) = \left(s,\tfrac kms+\tfrac \ell m\right)\cdot (-k,m)  = \ell;$$
		thus one can always find $(p,\q)$ such that the left hand side of the first inequality in \eqref{lf} iz zero.}	
	
	Our next main theorem is the following {stronger converse to the above computation}:
	
	\begin{thm}\label{thm:main_thm_Dirichlet}
		Let $(a,b)\in\bbR^2$. If $(a,b)\notin\bbQ^2$, then almost every point on the planar line $L_{a,b}$
		is not Dirichlet-improvable.
	\end{thm}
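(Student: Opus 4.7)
The plan is to combine the sequence equidistribution from Theorem~\ref{thm:main_thm_sequence} with the Dani correspondence. For $\mathbf{x} := (s, as+b)$ and $(p, q_1, q_2)\in\Z^3$, a direct computation gives
\[
g_t\phi_{a,b}(s)\,(p, q_1, q_2)^{T} = \bigl(e^{2t}(p+\mathbf{x}\cdot\mathbf{q}),\,e^{-t}q_1,\,e^{-t}q_2\bigr),
\]
so that, on setting $T = e^t$, this lattice vector lies in the closed box $B_\delta := [-\delta,\delta]\times[-1,1]^2$ exactly when $(p,\mathbf{q})$ solves the linear-form Dirichlet system \eqref{lf} (with $n=2$). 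For $t>0$ the case $\mathbf{q} = 0$ is automatically excluded, since then $|e^{2t}p|\geq e^{2t}>\delta$ whenever $p\neq 0$. Consequently $\mathbf{x}$ is $\delta$-improvable if and only if the lattice $\Lambda_t := g_t\phi_{a,b}(s)\Z^3$ contains a nonzero vector of $B_\delta$ for all sufficiently large $t$; equivalently, $\mathbf{x}$ fails to be Dirichlet-improvable if and only if for every $\delta\in(0,1)$ there are arbitrarily large $t$ at which $\Lambda_t\cap(B_\delta\setminus\{0\}) = \emptyset$.

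Because $(a,b)\notin\bbQ^2$, Theorem~\ref{thm:main_thm_sequence} furnishes a full-measure set $I'\subset I$ such that for every $s\in I'$ the orbit $\cO_s := \{g_t\phi_{a,b}(s)x_0\}_{t\geq 0}$ is dense in $X$. Since any arc $\{g_t\phi_{a,b}(s)x_0 : 0\leq t\leq T_0\}$ is a compact subset of the $8$-dimensional manifold $X$ and hence has empty interior, density of $\cO_s$ forces density of every tail $\{g_t\phi_{a,b}(s)x_0 : t\geq T_0\}$. Therefore such an orbit visits every nonempty open $V\subset X$ at arbitrarily large times.

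It remains to exhibit, for each $\delta\in(0,1)$, a nonempty open $W_\delta\subset X$ whose elements avoid $B_\delta\setminus\{0\}$. Fix $\alpha$ with $1<\alpha<\delta^{-1/2}$ and consider the unimodular lattice $\Lambda^{\alpha}$ with basis $(\alpha^{-2},0,0)$, $(0,\alpha,0)$, $(0,0,\alpha)$: any vector $(m\alpha^{-2}, n\alpha, k\alpha)\in B_\delta$ would satisfy $|m|\leq\alpha^{2}\delta<1$ and $|n|,|k|\leq\alpha^{-1}<1$, forcing $(m,n,k) = 0$. Since avoiding the compact set $B_\delta\setminus\{0\}$ is an open condition on lattices, a sufficiently small neighbourhood $W_\delta$ of $\Lambda^{\alpha}$ inherits the same property. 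Combining with the previous paragraph, for every $s\in I'$ and every integer $N\geq 2$ the dense orbit enters $W_{1-1/N}$ at arbitrarily large times, so by the Dani correspondence $(s,as+b)$ is not $(1-1/N)$-improvable; letting $N\to\infty$ shows it is not Dirichlet-improvable.

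The main obstacle in proving Theorem~\ref{thm:main_thm_Dirichlet} is not in this deduction but upstream, in Theorem~\ref{thm:main_thm_sequence} itself, which supplies the density of individual orbits under the nonrationality hypothesis $(a,b)\notin\bbQ^2$. Given Theorem~\ref{thm:main_thm_sequence}, the remaining pieces are elementary: the Dani correspondence is a straightforward linear-algebra identity, dense orbits in a positive-dimensional manifold are automatically recurrent to every nonempty open set, and the open set $W_\delta$ of ``Dirichlet-bad'' lattices admits an explicit construction.
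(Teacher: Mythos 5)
Your proof is correct and takes essentially the same route as the paper: deduce density of almost every orbit $\{g_t\phi_{a,b}(s)x_0\}_{t\geq 0}$ from Theorem~\ref{thm:main_thm_sequence}(2), then invoke the Dani correspondence to conclude that a dense orbit cannot eventually trap a short vector in the relevant box, so the point is not $\delta$-improvable for any $\delta<1$. The only cosmetic difference is that the paper cites \cite[Proposition 2.1]{KW08} for a normalized form of the correspondence (using the compact sets $K_{\delta^{1/3}}$, which contain a neighborhood of $x_0$ to serve as the ``Dirichlet-bad'' open set), whereas you unwind the correspondence directly with the box $B_\delta=[-\delta,\delta]\times[-1,1]^2$ and construct an explicit open set $W_\delta$ around a rescaled diagonal lattice $\Lambda^\alpha$; both are valid, and your tail-density argument (dense orbit implies dense tail since a compact arc has empty interior) is the same Baire-type observation implicit in the paper's ``not dense for all $s\in D_\delta$'' step.
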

	
	The deduction of \Cref{thm:main_thm_Dirichlet} from \Cref{thm:main_thm_sequence} uses Dani's correspondence, and has become a standard argument. We give the proof below for completeness.
	
	\begin{proof}[Proof of \Cref{thm:main_thm_Dirichlet} assuming \Cref{thm:main_thm_sequence}]
		For $0<\delta<1$, let $K_\delta\subset X$ denote the set of unimodular lattices in $\bbR^3$ whose shortest non-zero vector has norm at least $\delta$. Then $K_\delta$ contains an open neighborhood of $x_0$, and is compact by Mahler's compactness criterion. For $0<\delta<1$, let $D_\delta$ denote the set of $s\in\bbR$ such that $(s,as+b)$ is a $\delta$-improvable linear form.
		By Dani's correspondence \cite{Dan85} and \cite[Proposition 2.1]{KW08},
		\[
		D_\delta=\{ s\in\bbR : g_t\phi_{a,b}(s)x_0\notin K_{\delta^{1/3}} \text{ for all large } t \}.
		\]
		In particular, $\{ g_t\phi_{a,b}(s)x_0 \}_{t\geq 0}$ is not dense in $G/\Gamma$ for all $s\in D_\delta$. By \Cref{thm:main_thm_sequence} we have $\abs{D_\delta}=0$ for all $0<\delta<1$. Finally, we conclude the proof by noting that the set of $s$ such that $(s, as+b)$ is a Dirichlet-improvable point on $L_{a,b}$ equals $\bigcup_{m\geq1}D_{\frac{m-1}{m}}$, and hence has Lebesgue measure 0.
	\end{proof}
	
	\subsection{Strategy of the proof} Given any sequence $t_i\to\infty$, after passing to a subsequence we obtain that $g_{t_i}\lambda_{a,b}$ converges to a measure, say $\mu$, on $X$ with respect to the weak-* topology. It is straightforward to see that $\mu$ must be invariant under a non-trivial unipotent subgroup of $G$ (\Cref{prop:U-inv}). There are two possibilities: if $\mu$ is not a probability measure then we apply the Dani-Margulis non-divergence criterion (\Cref{thm:KMnondivergence}), and if $\mu$ is positive and not $G$-invariant then  we apply Ratner's description of ergodic invariant measures for unipotent flows, combined with the linearization technique (\Cref{prop:consequence_of_linear_focusing}). In both  cases we obtain the following {\em{linear dynamical obstruction to equidistribution:\/}} There exist a finite-dimensional representation $V$ of $G$ over $\bbQ$, a non-zero vector $v_0\in V(\bbQ)$, a constant $R>0$, and a sequence $\{\gamma_i\}\subset \Gamma=\SL_3(\bbZ)$ such that for each $i$,
	\begin{equation} \label{eq:basic-obstruction}
	\sup_{s\in I} \norm{g_{t_i}\phi_{a,b}(s)\gamma_iv_0}\leq R.
	\end{equation}
	
	The major effort involved in this proof is to analyze this linear dynamical obstruction and show that $(a,b)$ must satisfy certain Diophantine approximation condition. 
	
	Using Kempf's numerical criterion in geometric invariant theory, when the $Gv_0$ is not Zariski closed, 
	we reduce the obstruction to the case of $v_0$ being a highest weight vector (\Cref{prop:reduction_to_eigenvector}).
	Then we further reduce to the case of $v_0$ being a highest weight vector of a fundamental representation of $G$,
	namely the standard representation $\bbR^3$, or its exterior power $\bigwedge^2 \bbR^3$ (\Cref{lem:reduction_to_fundamental_rep}).
	It is straightforward to show that the obstruction \eqref{eq:basic-obstruction} does not arise for the exterior representation (\Cref{lem:exterior_square}), 
	so we are left only with the case of $V$ being the standard representation. 
	In the case of the standard representation the dynamical obstruction leads to the Diophantine condition that $(a,b)\in \Wtwo$ (\Cref{lem:interpretation_of_Wtwo}). 

    We are left with the case of $Gv_0$ being Zariski closed. Using explicit descriptions of finite-dimensional irreducible representations of $\SL_2$ and $\SL_3$, we show that in this case after passing to a further subsequence $\{\gamma_i v_0\}$ is constant and $(a,b)\in\bbQ^2$ (\Cref{prop:consequence_of_linear_focusing}). 
    
    We remark that for $G=\SL_n$ for $n>3$, analyzing the Zariski closed orbit case involves much greater complexities, and the above strong conclusion about $(a,b)$ is not possible. 

	\subsection{Comparison with previous work}
	We remark that \Cref{thm:main_thm_average} and \Cref{thm:main_thm_Dirichlet} sharpen the main results of Shi and Weiss \cite{SW17}. More precisely, it was shown in \cite{SW17} that the \emph{averages} of $g_t$-translates of $\lambda_{a,b}$ get {equidistributed} in $X$ if the line $\{y = ax+b\}$ contains a badly approximable vector.
By Remark~\ref{subsec:badly} below, this condition implies in particular that $(a,b)\notin \Wtwooo$, and so $(a,b)\notin\Wtwoo$.
Our results give sharp conditions of non-escape of mass and equidistribution for not only averages, but also pure translates which was not considered in \cite{SW17}.
This is why we are able to prove the much stronger \Cref{thm:main_thm_Dirichlet}.

	\subsection{Future directions}
	Instead of $g_t$, one may consider more general flows. It seems that our method is also applicable to the study of translates by elements in a Weyl chamber, at least for certain cones. Then the non-effective version of Theorem 1.1 of \cite{CY19} will be recovered. One would also be able to say something about improvability of weighted Dirichlet Theorem, and the readers are referred to \cite{Sha10} for a detailed introduction to this subject.
	
	One may also ask what happens to other Lie groups, e.g. $G=\SL_n(\R)$ for $n>3$. When $n=4$, things already become more complicated. Roughly speaking, $\SL_3(\R)$ is small and one does not have many choices of possible intermediate subgroups. However, in $\SL_n(\R)$, where $n>3$, there are more possibilities of intermediate subgroups; see the follow-up paper \cite{NimishPengyu} for more details. 
	
	\subsection{Acknowledgements}
	We would like to thank Emmanuel Breuillard, Alexander Gorodnik and Lior Silberman for helpful discussions. Part of the work was done when the second and the fourth-named authors were visiting the Hausdorff Research Institute for Mathematics (HIM) in Bonn for the trimester program ``Dynamics: Topology and Numbers'' in 2020; they would like to thank HIM for hospitality. We also thank the referees for their very careful detailed comments and corrections that greatly helped us improve the readability of the paper.

	\section{Instability and invariant theory}\label{sect:2}

	To analyze limiting distributions of sequences of translates of measures on homogeneous spaces a technique has been developed, where one applies the Dani-Margulis and Kleinbock-Margulis non-divergence criteria, Ratner's theorem and the linearization method, and reduces the problem to dynamics of subgroup actions on finite-dimensional representations of semisimple groups, see ~\cite{Sha09Invention,SY16}. In \cite{Yan20Invent}, this kind of linear dynamics was analysed in a very general situation using invariant theory results due to Kempf~\cite{Kem78}. We follow the same approach, and this section is devoted to describing the basic tools from geometric invariant theory that we shall need in our argument.

\bigskip
Let $G$ be a reductive real algebraic group defined over $\bbQ$, and $\rho: G \to \GL(V)$ a linear representation of $G$ defined over $\bbQ$.
	We say that a nonzero vector $v\in V$ is \eb{unstable} if the Zariski closure of the orbit $G v$ contains the origin. 
	Hilbert-Mumford's unstability criterion states that a nonzero vector $v$ is unstable if and only if there exists a cocharacter $\lambda\colon\mathbb{G}_m\to G$ such that $\lambda(t)v\stackrel{t\to0}{\longrightarrow}0$.
	Kempf~\cite{Kem78} refined this criterion by studying the set of cocharacters, up to scaling, $\lambda$ such that $\lambda(t)$ bring $v$ to $0$ at maximal speed as $t\to0$. Let us briefly recall his results.

\smallskip
	Write $X_{\ast}(G)$ for the set of $\bbQ$-cocharacters of $G$.
	For any nonzero $v\in V(\bbQ)$ and any nontrivial cocharacter $\lambda$ in $X_{\ast}(G)$, one can write $v=\sum_{i\in\bbZ}v_i$, where $\lambda(t)v_i=t^iv_i$ for all $i$.
	Let $m(v,\lambda)=\min\{i\in\bbZ:v_i\neq 0\}$. Then 
	\[
	v=v_{m(v,\lambda)}+\sum_{i>m(v,\lambda)} v_i.
	\] 
	Thus for any $g\in G(\bbQ)$, $m(gv,g\lambda g^{-1})=m(v,\lambda)$.
	
	For any $\lambda\in X_\ast(G)$, the group
	\[
	P(\lambda)=\left\{p\in G : \lim_{t\to 0} \lambda(t)p\lambda(t)^{-1} \text{ exists in } G \right\}
	\]
	is a parabolic subgroup of $G$ defined over $\bbQ$, and 
	\[
	R_u(P(\lambda))=\left\{u\in G : \lim_{t\to 0} \lambda(t)u\lambda(t)^{-1}=e\right\}
	\]
	is the unipotent radical of $P(\lambda)$ defined over $\bbQ$. Also $P(\lambda)=Z_G(\lambda)R_u(P(\lambda))$, and this product holds over $\bbQ$-points, where $Z_G(\lambda)$ is the centralizer of the image of $\lambda$ in $G$. 
	
	We note that if $u\in R_u(P(\lambda))$, then 
	\begin{equation}
	\label{rem:U+}
	uv=v_{m(v,\lambda)}+\sum_{i>m(v,\lambda)} (uv)_i.
	\end{equation}
	
	Let $S$ be a maximal $\bbQ$-split torus in $G$, we fix a positive definite integral bilinear form $(\cdot,\cdot)$ on the free abelian group $\cocharS$ of $\bbQ$-cocharacters on $S$ which is invariant under the Weyl group $N_G(S)/Z_G(S)$; it induces a norm on $\cocharS$ defined by $\norm{\lambda}=\sqrt{(\lambda,\lambda)}$.
	This norm extends uniquely to a norm on the set $X_\ast(G)$ of $\bbQ$-cocharacters of $G$ which is invariant under the conjugation by $G(\bbQ)$.
	
	\subsection*{Kempf's Theorem~\cite[Theorem~4.2]{Kem78}} {\em
	Let $v\in V(\bbQ)$ be a nonzero unstable vector. Then the following hold:
	
	{\rm a)} Let $B_v=\sup\bigl\{{m(v,\lambda)}/{\norm{\lambda}}: \lambda\in X_\ast(G) \text{ nontrivial}\bigr\}$. Then $B_v>0$.
	
	{\rm b)} Let $\Lambda_v=$ the set of indivisible $\lambda\in X_\ast(G)$ such that $m(v,\lambda)=B_v\cdot \norm{\lambda}$. Then,
	\begin{enumerate}
	\item[\rm (1)] \label{itm:Lambda_v} $\Lambda_v$ is non-empty.
	\item[\rm (2)] \label{itm:Pv}
	There exists a $\bbQ$-parabolic subgroup $P_v$ of $G$ such that $P_v=P(\lambda)$ for all $\lambda\in \Lambda_v$.
	\item[\rm (3)] \label{itm:conj} The set $\Lambda_v$ is a principle homogeneous space under conjugation by $\bbQ$-points of the unipotent radical of $P_v$. In particular, $P_v(\bbQ)$ acts transitively on $\Lambda_v$ under conjugation.
	\item[\rm (4)] For any maximal torus of $P_v$, which is defined over $\bbQ$, contains the image of a unique member of $\Lambda_v$.
	\end{enumerate}}
	
	In the above result we observe that for any $g\in G(\bbQ)$, $gv$ is also unstable, $B_v=B_{gv}$, $\Lambda_{gv}=g \Lambda_v g^{-1}$, and $P_{gv}=gP_{v}g^{-1}$. Therefore, if $g\in P_v(\bbQ)$, then by (3) of b) above, $\Lambda_{gv}=\Lambda_{v}$, and in particular, $m(gv,\lambda)=m(v,\lambda)$ for all $\lambda\in \Lambda_{v}$. 

	\bigskip
	We will apply Kempf's theorem to the group $G=\SL_3(\bbR)$.
	In that case, the maximal torus $S$ is chosen to be the subgroup of $G=\SL_3(\bbR)$ consisting of diagonal matrices. Then $\delta\in \cocharS$ means that there exists a unique $(a,b,c)\in\bbZ^3$ such that $a+b+c=0$ and $\delta(t)=\diag(t^a,t^b,t^c)$ for all $t\neq 0$. The Euclidean inner-product on $\bbZ^3$ restricts to a Weyl group invariant inner-product on $\cocharS$ with respect to this identification. So for $\delta$ as above, $\norm{\delta}=\sqrt{a^2+b^2+c^2}$.  
	Let $\charS$, denote the abelian group of $\bbQ$-characters on $S$. Define a bilinear pairing $\langle\cdot,\cdot\rangle:\charS\times\cocharS\to \bbZ$ of the $\bbZ$-modules such that for any $\chi\in\charS$ and $\delta\in\cocharS$, we have $\chi(\delta(t))=t^{\langle \chi,\delta\rangle}$ for all $t\neq 0$. Let $\delta^\vee\in \charS$ denote the dual to $\delta$ in the following sense: $\langle\delta^\vee,\lambda\rangle=(\delta,\lambda)$ for all $\lambda\in \cocharS$. So for the $\delta$ described as above, $\delta^\vee(\diag(t_1,t_2,t_3))=t_1^at_2^bt_3^c$ for all $\diag(t_1,t_2,t_3)\in S$.

	Choose simple roots 
	\[
	\alpha_1\colon \diag(t_1,t_2,t_3)\mapsto t_1t_2^{-1} \text{ and } \alpha_2\colon \diag(t_1,t_2,t_3)\mapsto t_2t_3^{-1}.
	\]
	The corresponding fundamental weights are 
	\[
	\omega_1\colon \diag(t_1,t_2,t_3)\mapsto t_1 \text{ and } \omega_2\colon \diag(t_1,t_2,t_3)\mapsto t_1t_2.
	\]
	
	For any non-negative integers $n_1$ and $n_2$, there exists a unique irreducible representation of $G$ with highest weight $n_1\omega_1+n_2\omega_2$, where we use the additive notation for $\charS$ (see \cite{FultonHarris}[Theorem~13.1]). 
	
	The standard parabolic subgroups of $G$ are
	\[
	P_0=\left\{\begin{pmatrix}
	* & * & * \\
	& * & * \\
	& & *
	\end{pmatrix}\right\},\quad
	P_1=\left\{\begin{pmatrix}
	* & * & * \\
	& * & * \\
	& * & *
	\end{pmatrix}\right\},\quad 
	P_2=\left\{\begin{pmatrix}
	* & * & * \\
	* & * & * \\
	& & *
	\end{pmatrix}\right\}.
	\]
	We shall also use the following algebraic subgroups of $G$:
	\[ 
	Q_0=\left\{\begin{pmatrix}
	1 & * & * \\
	& 1 & * \\
	& & 1
	\end{pmatrix}\right\},\quad
	Q_1=\left\{\begin{pmatrix}
	1 & * & * \\
	& * & * \\
	& * & *
	\end{pmatrix}\right\},\quad
	Q_2=\left\{\begin{pmatrix}
	* & * & * \\
	* & * & * \\
	& & 1
	\end{pmatrix}\right\}.
	\]
	\[
	S_0=\left\{\begin{pmatrix}
	* &  &  \\
	& * &  \\
	& & *
	\end{pmatrix}\right\},\quad
	S_1=\left\{\begin{pmatrix}
	t^{2} &  &  \\
	& t^{-1} &  \\
	&  & t^{-1}
	\end{pmatrix}\right\},\quad
	S_2=\left\{\begin{pmatrix}
	t &  &  \\
	& t &  \\
	& & t^{-2}
	\end{pmatrix}\right\}.
	\]
	\[ 
	H_0=\{1\},\quad
	H_1=\left\{\begin{pmatrix}
	1 &  &  \\
	& * & * \\
	& * & *
	\end{pmatrix}\right\},\quad
	H_2=\left\{\begin{pmatrix}
	* & * &  \\
	* & * &  \\
	& & 1
	\end{pmatrix}\right\}.
	\]
	\[ 
	U_0=\left\{\begin{pmatrix}
	1 & * & * \\
	& 1 & * \\
	& & 1
	\end{pmatrix}\right\},\quad
	U_1=\left\{\begin{pmatrix}
	1 & * & * \\
	& 1 &  \\
	&  & 1
	\end{pmatrix}\right\},\quad
	U_2=\left\{\begin{pmatrix}
	1 &  & * \\
	& 1 & * \\
	& & 1
	\end{pmatrix}\right\}.
	\]
	One has $P_i=S_iQ_i=S_iH_iU_i$, for $i=0,1,2$.

\subsection{Reduction to a highest weight vector}
The next result (\Cref{prop:reduction_to_eigenvector}) provides a powerful new technique that allows one to reduce the study of linear dynamics of an arbitrary vector in an arbitrary representation to that of a highest weight vector. From this we will further reduce the study to fundamental representations (\Cref{lem:reduction_to_fundamental_rep}), opening the doors to directly relate the linear dynamics to Diophantine properties of vectors (\Cref{lem:interpretation_of_Wtwo}). 
	
	The proof of the following result was motivated by \cite[Proposition~2.4]{Yan20Invent}. 
	
	Throughout this article, we will assume that all the finite dimensional vector spaces are equipped with some norm, denoted by $\norm{\cdot}$.
	
	\begin{thm}\label{prop:reduction_to_eigenvector}
		Let $V$ be a representation of $G$ defined over $\bbQ$. Let $v$ be an unstable vector in $V(\bbQ)$. Then there exists an irreducible representation $W$ of $G$ defined over $\bbQ$, a highest weight vector $w'\in W(\bbQ)$, an element $g_0\in G(\bbQ)$, a real number $\beta>0$, and a real number $C>0$ such that  for any $g\in G$ one has
		\begin{equation*}\label{eq:temp1}
		\lVert gg_0w' \rVert \leq C \lVert gv \rVert^\beta.
		\end{equation*}
	\end{thm}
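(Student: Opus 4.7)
The plan is to apply Kempf's theorem on instability to extract a $\bbQ$-rational optimal destabilizing $1$-parameter subgroup $\lambda$ for $v$, and then to construct $W$, $w'$, $g_0$ matched to $\lambda$. Concretely, since $v \in V(\bbQ)$ is unstable, Kempf's theorem gives a $\bbQ$-rational $\lambda \in X_{\ast}(G)$ with $m := m(v,\lambda) > 0$ such that $m(v,\cdot)/\|\cdot\|$ is maximized at $\lambda$, and $\lambda$ is determined by $v$ up to conjugation by the $\bbQ$-parabolic $P := P(\lambda)$. Since $P$ is $G(\bbQ)$-conjugate to a standard $\bbQ$-parabolic $P_0$, pick $g_0 \in G(\bbQ)$ with $P = g_0 P_0 g_0^{-1}$. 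Choose an irreducible $\bbQ$-representation $W$ of $G$ with highest weight $\chi$ such that the stabilizer of the highest-weight line equals $P_0$; taking $\chi$ to be a sufficiently large positive multiple of a fundamental weight associated to $P_0$ guarantees $W$ and a highest weight vector $w' \in W(\bbQ)$ are both $\bbQ$-rational. The line $\bbR\cdot g_0 w'$ is then stabilized by $P$, and $\lambda$ acts on $g_0 w'$ by a character with $k := \langle g_0 \cdot \chi, \lambda \rangle > 0$. Set $\beta := k/m$.

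To establish $\|g g_0 w'\| \leq C \|g v\|^\beta$, first observe the asymptotic match along $\lambda$: since $g_0 w'$ is a $\lambda$-eigenvector of weight $k$, $\lambda(t) g_0 w' = t^k g_0 w'$, whereas if $v = v_m + v_{>m}$ with $v_m$ the lowest $\lambda$-weight component, then $\|\lambda(t) v\| \asymp t^m$ as $t \to 0^+$. Hence $\|\lambda(t) g_0 w'\|/\|\lambda(t) v\|^\beta$ is bounded, confirming $\beta = k/m$ is the correct exponent. To extend to all $g \in G$, use the Cartan decomposition $G = K A^+ K$ together with $K$-invariance of suitable norms, reducing to $g = a \in A^+$. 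Decomposing $v$ and $g_0 w'$ into $A$-weight vectors, the inequality becomes a weight comparison: for every $H$ in the closed Weyl chamber,
\[
\max\{\nu(H) : (g_0 w')_\nu \neq 0\} \;\leq\; \beta \cdot \max\{\nu(H) : v_\nu \neq 0\},
\]
with the analogous statement with $\min$ in place of $\max$ for $-H$. This is the content of Kempf's optimality: for any non-trivial $1$-psg $\mu$ one has $m(g_0 w', \mu)/m(v, \mu) \leq \beta$, because $\lambda$ is the common optimal destabilizing direction (up to conjugation by $P$).

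The main obstacle I anticipate is this uniform weight-ratio inequality. Whereas the asymptotic matching along $\lambda$ alone is transparent, extending it to a universal bound over the entire Weyl chamber requires either invoking the concavity of the Kempf--Rousseau numerical function on the spherical building, or constructing $g_0 w'$ (up to a scalar) as the projection of $v^{\otimes \beta}$ onto a suitable $G$-isotypic component of $V^{\otimes \beta}$. The non-vanishing of this projection rests, essentially, on the semistability of $v_m$ for the action of the Levi of $P(\lambda)$ on the lowest-weight subspace, which is itself a consequence of Kempf's theorem.
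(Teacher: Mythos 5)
Your setup is right and matches the paper's: apply Kempf to get an optimal $\bbQ$-rational $\lambda$, conjugate by $g_0\in G(\bbQ)$ so $P(\lambda)$ becomes a standard parabolic, take $W$ to be an irreducible $\bbQ$-representation whose highest weight is a positive multiple of the character dual to $\lambda$, and set $\beta = k/m(v,\lambda)$. The problem is the part you flag as "the main obstacle": you haven't actually proved the uniform bound, and the reduction you propose does not work.

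The Cartan reduction is broken as stated. Writing $g = k_1 a k_2$ and using $K$-invariant norms gives $\|g g_0 w'\| = \|a\,(k_2 g_0 w')\|$ and $\|g v\| = \|a\,(k_2 v)\|$. The vectors $k_2 g_0 w'$ and $k_2 v$ depend on $k_2$, and their $A$-weight supports can be essentially anything as $k_2$ varies; in particular they need not be controlled by the weight supports of $g_0 w'$ and $v$. So the displayed inequality on weight supports — even if true for the fixed vectors $g_0 w'$ and $v$ — does not bound $\|a(k_2 g_0 w')\|$ in terms of $\|a(k_2 v)\|^\beta$ uniformly in $k_2$. The decomposition you want is not $KAK$ but $G = K P_j = K S_j H_j U_j$: since $P_j$ acts on $w'$ by the character $d\delta^\vee$ (with $H_j U_j$ fixing $w'$), writing $g = k\,s\,h\,u$ gives the \emph{exact} identity $\|g w'\| = |\delta^\vee(s)|^d\|w'\|$, which is what makes the comparison tractable. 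That is the decomposition the paper uses.

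Even after that fix, the heart of the argument remains: one must rule out a sequence $g_i$ with $\|g_i v'\|^\beta/\|g_i w'\|\to 0$. The paper does this by projecting $v'$ onto its lowest $\delta$-weight component $\pi(v')\in\tilde V$, showing the hypothetical sequence forces $\pi(v')$ to be unstable for the Levi factor $S_\delta H_j$, applying Kempf a \emph{second} time to produce a destabilizing $\delta_l$ orthogonal to $\delta$, and then checking via a first-order computation in $s\mapsto \|\delta + s\delta_l\|^{-2}\langle\chi,\delta+s\delta_l\rangle^2$ that $N\delta+\delta_l$ destabilizes $v'$ strictly faster than $\delta$ for $N\gg 1$ — contradicting optimality. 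Your two suggested fixes (concavity of the Kempf--Rousseau function on the building; or realizing $g_0 w'$ as a projection of $v^{\otimes N}$, whose non-vanishing rests on Levi-semistability of the lowest-weight component) are plausible alternative routes, and the second is closely related in spirit to what the paper does, but neither is carried out. As written, the proof has a genuine gap precisely at the step the theorem is about.
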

	
	\begin{proof}
		Without loss of generality, we may assume that the norms are $K:=\SO(3)$-invariant. Given the unstable $v\in V(\bbQ)$, let $B_v>0$, $\Lambda_v\subset X\ast(G)$, and a $\bbQ$-parabolic subgroup $P_v$ of $G$ be as given by Kempf's theorem. 
	By \cite[Proposition 21.12]{Bor91}, there exist $g_0\in G(\bbQ)$ and $j\in\{0,1,2\}$ such that $P_{v} = g_0P_jg_0^{-1}$.
	Let $v'=g_0^{-1}v$. Then $v'\in V(\bbQ)$ is also unstable, and by Kempf's theorem, $P_{v'}=g_0^{-1}P_vg_0=P_j$, and since $S\subset P_j=P_{v'}$, we have that $\cocharS$ contains a unique member, say $\delta$, of $\Lambda_{v'}$.  Therefore 
		\begin{equation*} 
		    P(\delta)=P_{v'} = P_j.
		\end{equation*}
		Hence, $\Ima\delta$ is contained in $S_j$, and $\delta(t)=\diag(t^a,t^b,t^c)$ for all $t\neq 0$ such that $(a,b,c)\in\bbZ^3$, $a+b+c=0$ and $a\geq b\geq c$. 
		
		Now let $\delta^{\vee}\in \charS$ be dual to $\delta$. 
		Let $S_\delta$ denote the $\bbQ$-subtorus of $S_j$ which is the identity component of the kernel of $\delta^\vee$ in $S_j$. We have that $\Ima\delta\cap S_\delta$ is finite and $(\Ima \delta)S_\delta=S_j$.
		
		We have that $\delta^\vee=(a-b)\omega_1+(b-c)\omega_2$, with $a-b\geq 0$ and $b-c\geq 0$. Therefore there exists an irreducible representation $W$ of $G$ defined over $\bbQ$ with the highest weight $\delta^\vee$. Let $w'\in W(\bbQ)$ be a highest weight vector. Let 
		\[
		\beta=\frac{(\delta,\delta)}{m(v',\delta)}=\frac{1}{B_{v'}}>0.
		\]
		
		Now it suffices to show that there exists $C>0$ such that for any $g\in G$ we have
		\begin{equation*}
		\norm{gw'}\leq C\norm{gv'}^\beta.
		\end{equation*}
		To argue by contradiction, suppose that there exists a sequence $\{g_i\}\subset G$ such that 
		\begin{equation*} 
		\lim_{i\to\infty} \frac{\lVert g_iv' \rVert^\beta }{ \lVert g_iw' \rVert}=0. 
		\end{equation*}
		
		We note that $P(\delta)=P_j=S_jQ_j$, $Q_j=H_jU_j$ fixes $w'$, and $S_j$ acts on $w'$ via the character $\delta^{\vee}$. Since $G=KP_j=KS_jH_jU_j$, we can write $g_i=k_is_ih_iu_i$ where $k_i\in K$, $s_i\in S_j$, $h_i\in H_j$ and $u_i\in U_j$. Since the norms are $K$-invariant, we may assume that $k_i=e$ for all $i$. Now  $s_ih_iu_iw'=\delta^{\vee}(s_i)w'$. Hence $\lVert g_iw' \rVert=\lvert\delta^{\vee}(s_i)\rvert\lVert w' \rVert$. Since $S_j=(\Ima\delta)S_\delta$, we can write $s_i=\delta(\tau_i)\sigma_i$, where $\tau_i\in\bbR^\times$ and $\sigma_i\in S_\delta$. Then 
		\begin{equation}\label{eq:temp2}
		\lVert g_iw' \rVert=\lvert\delta^\vee(\delta(\tau_i))\rvert\lVert w' \rVert=\abs{\tau_i}^{(\delta,\delta)}\norm{w'}.
		\end{equation} 
		
		We consider the weight space decomposition $V = \oplus V_{\chi}$, where $S$ acts on $V_{\chi}$ by multiplication via the character $\chi$ of $S$, where each $V_{\chi}$ is defined over $\bbQ$ as $S$ is a $\bbQ$-split torus. Let 
		\begin{equation*} 
		\tilde{V}=\{ x\in V : \delta(t)x=t^{m(v',\delta)} x \}=\oplus\{V_\chi: \chi\in\charS \text{ and }\langle\chi,\delta\rangle=m(v',\delta)\}.
		\end{equation*}
		Let $\pi:V\to \tilde{V}$ denote the natural projection defined over $\bbQ$. Then $\pi(v')\in \tilde{V}(\bbQ)$. Since $\delta\in \Lambda_{v'}$, we have that $\pi(v')=v'_{m(v',\delta)}\neq 0$. Since $S_jH_j$ is contained in the centralizer of $\delta$, we have that $\pi$ is $S_jH_j$-equivariant.
		
		There exists $C_1>0$ such that $\lVert \pi(x) \rVert \leq C_1\lVert x \rVert$ for all $x\in V$. It follows that
		\begin{equation}\label{eq:temp3}
		\frac{\lVert \pi(g_iv')\rVert^\beta}{\lVert g_iw' \rVert} \leq C_1^\beta \frac{\norm{g_iv'}^\beta}{\norm{g_iw'}}\underset{i\to\infty}{\longrightarrow} 0.
		\end{equation}
		
		For any $u\in U_j$, $\delta(t)u\delta(t)^{-1}\to e$ as $t\to 0$, so by \eqref{rem:U+}, $\pi(uv')=\pi(v')$. Since $g_i=\delta(\tau_i)\sigma_ih_iu_i$, 
		\begin{equation}\label{eq:temp4}
		\lVert \pi(g_iv') \rVert = \lVert \delta(\tau_i)\pi(\sigma_ih_iu_iv') \rVert = \lvert \tau_i \rvert^{m(v',\delta)}\lVert \sigma_ih_i\pi(v') \rVert.
		\end{equation}
	Combining \eqref{eq:temp2}, \eqref{eq:temp3} and \eqref{eq:temp4}, since $m(v',\delta)\beta=(\delta,\delta)$, we get $\lVert \sigma_ih_i\pi(v') \rVert\to 0$. Since $\sigma_i\in S_{\delta}$ and $h_i\in H_j$, we conclude that $\pi(v')$ is $S_\delta H_j$-unstable in $\tilde{V}$. 
		
		Thus $S_\delta H_j$ is a reductive $\bbQ$-group acting on $\tilde V$ over $\bbQ$ and $\pi(v')\in \tilde V(\bbQ)\setminus\{0\}$ is an unstable vector for this action. Therefore by (a) of Kempf's theorem, there exists $\lambda\in X_\ast(S_\delta H_j)$ such that $\lambda(t)\pi(v')\to 0$ as $t\to 0$. 
		
		Since $S_\delta H_j\cap S$ is a maximal $\bbQ$-split torus of $S_\delta H_j$, by the conjugacy of maximal $\bbQ$-split tori,  there exists $l\in (S_\delta H_j)(\bbQ)$ such that $\delta_l:=l \lambda l^{-1} \in X_\ast(S_\delta H_j\cap S)$. So $\delta_l(t)(l\pi(v'))\to 0$ as $t\to 0$. Now $l\pi(v')=\pi(lv')$. So for any $\chi\in\charS$, 
		\begin{equation*} 
		\text{if $(\pi(lv'))_\chi\neq 0$, then } \langle \chi, \delta_l\rangle >0.
		\end{equation*}
		
		Since $S_\delta H_j\cap S=\ker\delta^\vee$, we have $1=\delta^\vee(\delta_l(t))=t^{\langle\delta^\vee,\delta_l\rangle}$ for all $t\neq 0$, and hence
		\begin{equation} \label{eq:perp}
		(\delta, \delta_l)=\langle\delta^\vee,\delta_l\rangle=0. 
		\end{equation}
		
		Since $l\in P_j(\bbQ)=P_{v'}(\bbQ)$, as we noted after the statement of Kempf's theorem,
		\begin{equation*} 
	\delta\in\Lambda_{lv'} \text{ and } m(lv',\delta)=m(v',\delta).
		\end{equation*}
		
		For a positive integer $N$, let $\delta_N=N\delta + \delta_l\in \cocharS$, in the additive notation. For $N$ large enough, we claim that
		\begin{equation} \label{eq:destablize_faster}
		 \frac{m(lv',\delta_N)}{\norm{\delta_N}}>\frac{m(\delta,lv')}{\norm{\delta}}=B_{lv'},
		\end{equation} 
		which will contradict the maximality of $B_{lv'}$.
		
		For any $w\in V$, we write $w=\sum_{\chi\in \charS} w_\chi$, where $w_\chi\in V_\chi$. Note that for any nonzero $w\in V$ and $\lambda\in \cocharS$, we have \[
		m(w,\lambda)=\min\{\langle \chi,\lambda\rangle:\chi\in\cocharS,\, w_\chi\neq 0\}.
		\]
		
		So to prove \eqref{eq:destablize_faster}, we pick any $\chi\in \charS$ such that $(lv')_\chi\neq 0$, and we will show that for all sufficiently large $N$,
		\begin{equation}\label{eq:higher_speed}
		\frac{\langle \chi, \delta_{N} \rangle}{\norm{\delta_{N}} } > \frac{m(\delta,lv')}{\norm{\delta}}.
		\end{equation}
		
		By definition 
		$m(lv',\delta)\leq\langle\chi,\delta \rangle$. First suppose that $\langle\chi,\delta \rangle>m(lv',\delta)$. Then 
		\[
		\lim_{N\to\infty} \frac{\langle \chi, \delta_{N} \rangle}{\norm{\delta_{N}} }=\frac{\langle \chi, \delta \rangle}{\norm{\delta}}>\frac{m(lv',\delta)}{\norm{\delta}},
		\]
		because $\langle\chi,\delta_l \rangle<\infty$. Therefore \eqref{eq:higher_speed} follows for all sufficiently large $N$.
		
		Now suppose that $\langle\chi,\delta \rangle=m(lv',\delta)$. Since $m(lv',\delta)=m(v',\delta)$, we have $(lv')_\chi\in \tilde V$. Since $\pi$ is $S$-equivariant, we have $(\pi(lv'))_\chi=(lv')_\chi\neq 0$. Therefore $\langle \chi, \delta_l \rangle > 0$.
	
		To prove \eqref{eq:higher_speed}, we define an auxiliary function:
		\begin{equation*}
		f(s) 
		= \frac{\langle \chi, \delta + s\cdot \delta_l \rangle ^ 2}{\norm{\delta + s \cdot \delta_l} ^ 2}
		:= \frac{\langle \chi , \delta \rangle ^ 2 + 2s\langle \chi, \delta \rangle\langle \chi, \delta_l \rangle + s^2 \langle \chi, \delta_l\rangle ^2}
		{(\delta, \delta) + 2s(\delta, \delta_l) + s^2(\delta_l, \delta_l)}, \,\forall s\in\bbR.
		\end{equation*}
		Compute its derivative at $0$:
		\begin{equation*}
		f'(0) = \frac{2\langle \chi,\delta\rangle\langle\chi,\delta_l\rangle(\delta, \delta) - 2\langle\chi, \delta\rangle^2(\delta, \delta_l)}
		{(\delta, \delta)^2}.
		\end{equation*}
		We have $\langle \chi, \delta \rangle =m(lv',\delta)> 0$ and $\langle \chi, \delta_l \rangle > 0$. Also  $(\delta, \delta_l)=0$ by \eqref{eq:perp}. Therefore $f'(0) > 0$. Hence for $N$ large we have
		\begin{equation}\label{eq:temp14}
		f(1/N) > f(0).
		\end{equation}
		Now \eqref{eq:higher_speed} follows because each side of \eqref{eq:temp14} is the square of each corresponding side of \eqref{eq:higher_speed}. Therefore \eqref{eq:destablize_faster} holds, contradicting the maximality of $B_{lv'}$.
	\end{proof}
	
	\subsection{Reduction to fundamental representations}
	Let $W_1=\bbR^3$ and $W_2=\bigwedge^2\bbR^3$. Let $w_1=e_1\in W_1$ and $w_2=e_1\wedge e_2\in W_2$. Let $\omega_1$ and $\omega_2$ be the highest weights of $W_1$ and $W_2$. Then $\omega_1$ and $\omega_2$ are the fundamental weights of $G$, and any dominant integral weight is a non-negative integral linear combination of $\omega_1$ and $\omega_2$.
	
	\begin{lem}\label{lem:reduction_to_fundamental_rep}
		Let $W$ be an irreducible representation of $G$ with highest weight $\omega=n_1\omega_1+n_2\omega_2$, where $n_1,n_2$ are non-negative integers, and let $w\in W$ be a highest weight vector. Then for any real-analytic map $\psi\colon I\to G$, where $I\subset\R$ is a nontrivial compact interval, there exists a constant $c>0$ such that for any $h_1,h_2\in G$,
		\[
		\sup_{s\in I} \norm{h_1\psi(s)h_2w} \geq c\cdot\left(\min_{1\leq i\leq 2}\sup_{s\in I} \norm{h_1\psi(s)h_2w_i}\right)^{n_1+n_2}.
		\]
	\end{lem}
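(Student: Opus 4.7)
The plan is to combine an exact multiplicative identity for highest weight vectors with a uniform $(C,\alpha)$-good estimate in the sense of Kleinbock--Margulis.

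First, I equip $W,W_1,W_2$ with $K$-invariant norms; this only affects the final constant since any two norms are equivalent. Via the Iwasawa decomposition $G=KAN$, every $g\in G$ factors as $g=k(g)a(g)n(g)$. Since $w$ is a highest weight vector for $B=AN$ with weight $\omega$, the unipotent radical $N$ fixes $w$ pointwise while $a\in A$ acts by the positive scalar $\omega(a)$; combined with $K$-invariance of the norm this gives $\|gw\|=\omega(a(g))\|w\|$, and similarly $\|gw_i\|=\omega_i(a(g))\|w_i\|$. Because $\omega=n_1\omega_1+n_2\omega_2$ corresponds to the factorization $\omega(a)=\omega_1(a)^{n_1}\omega_2(a)^{n_2}$, one obtains the pointwise identity
\[
\|gw\|=C_0\,\|gw_1\|^{n_1}\|gw_2\|^{n_2},\qquad g\in G,
\]
with $C_0=\|w\|/(\|w_1\|^{n_1}\|w_2\|^{n_2})$.

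Next, set $g(s)=h_1\psi(s)h_2$. The coordinates of $g(s)w_i$ are linear combinations of the fixed real-analytic functions $s\mapsto\psi(s)_{jk}$ with coefficients depending only on $h_1,h_2$; hence $s\mapsto\|g(s)w_i\|^2$ lies in a finite-dimensional subspace $\mathcal V_i$ of real-analytic functions on $[0,1]$ that depends only on $\psi$ and $W_i$. A standard compactness argument on the projective space of $\mathcal V_i$ (using the fact that any nonzero real-analytic function on a compact interval is $(C,\alpha)$-good) produces constants $C,\alpha>0$ depending only on $\psi$ such that every nonzero $f\in\mathcal V_i$ obeys
\[
\bigl|\{s\in[0,1]:|f(s)|<\eps\sup_{[0,1]}|f|\}\bigr|\le C\eps^{\alpha},\qquad \eps>0.
\]

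Finally, write $M_i=\sup_s\|g(s)w_i\|>0$ and $m=\min(M_1,M_2)$, and fix $\eps>0$ (depending only on $\psi$) so that $2C\eps^{2\alpha}<1$. Applying the $(C,\alpha)$-good estimate to $f_i(s)=\|g(s)w_i\|^2$ (whose supremum is $M_i^2$) shows that each set $E_i=\{s:\|g(s)w_i\|<\eps M_i\}$ has $|E_i|\le C\eps^{2\alpha}<1/2$, so there exists $s_0\in[0,1]\setminus(E_1\cup E_2)$ at which $\|g(s_0)w_i\|\ge\eps m$ for both $i=1,2$. Plugging into the identity from the first step gives
\[
\sup_{s\in[0,1]}\|h_1\psi(s)h_2 w\|\ge\|g(s_0)w\|=C_0\|g(s_0)w_1\|^{n_1}\|g(s_0)w_2\|^{n_2}\ge C_0\eps^{n_1+n_2}\,m^{n_1+n_2},
\]
finishing the proof with $c=C_0\eps^{n_1+n_2}$. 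The hard part will be the uniform $(C,\alpha)$-good estimate in the second step: one must exhibit constants $C,\alpha$ independent of $(h_1,h_2)$. Real-analyticity of $\psi$ is essential here, since on a mere $C^\infty$ family the constants could degenerate.
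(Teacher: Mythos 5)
Your proposal is correct and begins exactly as the paper does: you derive the pointwise multiplicative identity $\|gw\|=C_0\|gw_1\|^{n_1}\|gw_2\|^{n_2}$ from the Iwasawa decomposition, $K$-invariance of the norms, and the fact that a highest weight vector is fixed by the unipotent part. Where you diverge is in the second step, and this is a genuinely different route. The paper treats $F(g)=\|h_1gh_2w\|^2$ and $F_i(g)=\|h_1gh_2w_i\|^2$ as regular functions of bounded degree on the irreducible variety $Z=\overline{\psi([0,1])}^{\mathrm{Zar}}$ (irreducibility holds because $\psi$ is analytic on a connected interval), equips the finite-dimensional space of such polynomials with the sup-norm over $\psi([0,1])$, and deduces by a one-line compactness argument on the unit sphere that $\|E_1E_2\|\ge c\|E_1\|\|E_2\|$ for all such polynomials; iterating this gives $\|F\|\ge c\|F_1\|^{n_1}\|F_2\|^{n_2}$ and hence the lemma. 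You instead bound the \emph{measures} of the bad sublevel sets $E_i$ via a uniform $(C,\alpha)$-good estimate for the finite-dimensional analytic families $\mathcal V_i$, locate a single $s_0$ outside $E_1\cup E_2$, and plug it into the multiplicative identity. Both arguments ultimately rest on compactness plus analyticity, but the paper's norm-of-product inequality is a self-contained two-sentence claim, whereas the uniform $(C,\alpha)$-goodness of a finite-dimensional space of real-analytic functions—while true, and available in the literature (it follows by first bounding the order of vanishing uniformly over the compact unit sphere of $\mathcal V_i$, then making the good constants uniform)—is a heavier and less self-contained input than the phrase ``standard compactness argument'' suggests. You correctly flag this as the delicate point; with that lemma cited or proved, your argument is complete and establishes the statement, trading the paper's direct algebraic estimate for a quantitative-nondivergence-flavored one.
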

	
	\begin{proof}
		Let the notation be as in the beginning of this section. We have $G=KS_0U_0$. Hence for $g\in G$, we can write $g=ktu$ for $k\in K, t\in S_0=S$ and $u\in U_0$. We note that $w_1$ and $w_2$ are both fixed by $U_0$. Taking any $K$-invariant norms on $W_1$ and $W_2$, we have $\norm{gw_1}=\abs{\omega_1(t)}\norm{w_1}$ and $\norm{gw_2}=\abs{\omega_2(t)}\norm{w_2}$. We take a $K$-invariant norm on $W$ such that $\norm{w}=\norm{w_1}^{n_1}\norm{w_2}^{n_2}$. Then for any $g\in G$,
\(
\norm{gw} = \norm{gw_1}^{n_1}\norm{gw_2}^{n_2}.
\)
		Now let
\[
F(g)=\norm{h_1gh_2w}^2
\quad\mbox{and}\quad
F_i(g)=\norm{h_1gh_2w_i}^2,\quad i=1,2.
\]
Then $F, F_1, F_2$ are regular functions on $G$, and
\[
F(g)=F_1(g)^{n_1}F_2(g)^{n_2}.
\]
Let $Z$ be the Zariski closure of $\psi(I)$ in $G$. Since $\psi$ is analytic, $Z$ is an irreducible algebraic set.
		We use the norm
\[
\norm{F}=\sup_{s\in I}\abs{F(\psi(s))}
\]
on the space of regular functions on $Z$.
We claim that for any positive integers $d_1$ and $d_2$, there exists a constant $c=c(d_1,d_2)>0$ such that for any polynomials $E_1$ and $E_2$ of degrees $d_1$ and $d_2$ respectively on $Z$, we have $\norm{E_1E_2}\geq c\norm{E_1}\norm{E_2}$. Indeed, by homogeneity we only need to check this for $\norm{E_1}=\norm{E_2}=1$, and then the possible values of $\norm{E_1E_2}$ form a compact subset of $\bbR_{>0}$. Therefore,
		\[
		\norm{F}\geq c\norm{F_1}^{n_1}\norm{F_2}^{n_2}\geq c\cdot\left(\min_{1\leq i\leq 2} \norm{F_i}\right)^{n_1+n_2}.
		\]
	\end{proof}

\subsection{From fundamental representations to the standard representation}
Let the notation be as before: We fix $(a,b)\in\bbR^2$, $I=[s_0,s_1]\subset\bbR$ for some $s_0<s_1$, and for any $s\in I$ and $t\in\bbR$, we have 
	$$\phi_{a,b}(s)= {\begin{pmatrix}
		1 & s & as+b \\
		& 1 & \\
		& & 1
		\end{pmatrix}} \text{ and } g_t=\begin{pmatrix}
		e^{2t} \\
		& e^{-t}  \\
		& & e^{-t}
		\end{pmatrix}.$$

The following observation allows us to reduce our possibilities from all fundamental representations to only the standard representation. 

		\begin{lem}\label{lem:exterior_square}
		There exists a constant $C_I$ depending only on $I$ such that for any non-zero $v\in W_2(\Z)=\bigwedge^2\bbZ^3$ and $t\geq 0$,
		\begin{equation} \label{eq:ext2}
	\sup_{s\in I} \norm{g_{t}\phi_{a,b}(s)v}\geq C_Ie^t.
    \end{equation}
	\end{lem}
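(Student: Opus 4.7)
The plan is to reduce the statement to an elementary linear-algebraic computation by writing everything in terms of the standard basis of $W_2 = \bigwedge^2 \R^3$, namely $e_1 \wedge e_2,\ e_1 \wedge e_3,\ e_2 \wedge e_3$. The key feature to exploit is that under the cocharacter $g_t$ the weights on these basis vectors are $t, t, -2t$, so two directions expand at rate $e^t$ and only one contracts. This means that to fail \eqref{eq:ext2} one would essentially need the two expanding coordinates of $\phi_{a,b}(s)v$ to vanish simultaneously for every $s \in I$, which is incompatible with $v$ being a nonzero integer vector.

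First I would compute the action of $\phi_{a,b}(s)$ on the basis: $e_1\wedge e_2$ and $e_1\wedge e_3$ are fixed (since they contain $e_1$, and $\phi_{a,b}(s)$ fixes $e_1$ while mapping $e_2, e_3$ into $e_1 + \text{span}(e_2,e_3)$), while a direct expansion gives
\[
\phi_{a,b}(s)(e_2\wedge e_3)=-(as+b)\,e_1\wedge e_2 + s\,e_1\wedge e_3 + e_2\wedge e_3.
\]
Writing $v=\alpha\,e_1\wedge e_2 + \beta\,e_1\wedge e_3 + \gamma\,e_2\wedge e_3$ with $(\alpha,\beta,\gamma)\in\Z^3\setminus\{0\}$, I would then apply $g_t$ to obtain
\[
g_t\phi_{a,b}(s)v = e^t\bigl(\alpha-\gamma(as+b)\bigr)e_1\wedge e_2 + e^t(\beta+\gamma s)\,e_1\wedge e_3 + e^{-2t}\gamma\,e_2\wedge e_3.
\]

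The argument then splits into two cases according to whether the contracting coordinate $\gamma$ vanishes. If $\gamma=0$, then $(\alpha,\beta)\in\Z^2\setminus\{0\}$, so one of $|\alpha|,|\beta|$ is at least $1$, and the first two (expanding) coordinates already give $\sup_{s\in I}\norm{g_t\phi_{a,b}(s)v}\gtrsim e^t$ uniformly in $s$. If $\gamma\neq 0$, then $|\gamma|\geq 1$ and the function $s\mapsto \beta+\gamma s$ is affine with slope $\gamma$; its values at the endpoints of $I=[s_1,s_2]$ differ by $|\gamma||I|\geq |I|$, so by the triangle inequality at least one endpoint satisfies $|\beta+\gamma s|\geq |I|/2$, giving $\norm{g_t\phi_{a,b}(s)v}\gtrsim e^t|I|/2$ there.

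Combining the two cases, the constant $C_I$ can be taken as a fixed multiple of $\min(1,|I|/2)$, depending only on the chosen norm on $W_2$ (all norms being equivalent). There is essentially no obstacle: the two diagonal directions of weight $+t$ of $g_t$ on $\bigwedge^2 \R^3$ dominate, and the only way to make both of them small is to kill the $e_2\wedge e_3$ component, which forces the other two integer components to carry the lower bound. The crucial fact that distinguishes $W_2$ from the standard representation $W_1$ in this argument is precisely this asymmetry of the weights, namely that only one weight of $g_t$ on $W_2$ is negative.
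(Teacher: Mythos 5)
Your proof is correct and follows essentially the same approach as the paper's: compute $g_t\phi_{a,b}(s)v$ explicitly in the standard basis $e_i\wedge e_j$ of $\bigwedge^2\bbR^3$, note that the two $e_1\wedge e_j$ components carry the factor $e^t$, and show that at least one of those two components must be $\gtrsim 1$ at some $s\in I$ because $v\in\bbZ^3\setminus\{0\}$. Where you differ slightly is in the bookkeeping: you split cleanly on whether the $e_2\wedge e_3$ coefficient $\gamma$ vanishes (using integrality of $\alpha,\beta$ when it does, and the slope-$\gamma$ affine map $s\mapsto\beta+\gamma s$ when it does not), whereas the paper packs the $\gamma\neq 0$ and $\gamma=0,\ \beta\neq 0$ cases into a single $\max/\min$ inequality involving both $|q|$ and $|r|$ and then handles $(q,r)=(0,0)$ separately. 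Your version is a bit more transparent, but the substance is identical.
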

	
	\begin{proof}
	    Let $e_1,e_2,e_3$ denote the standard basis of $\bbR^3$ (and $\bbZ^3$).
		We write $e_{ij}=e_i\wedge e_j$ for the standard basis of $\bigwedge^2\bbR^3$.
		For any $s\in I$ and $t\geq 0$, one can readily compute the matrix of $g_t\phi_{a,b}(s)$ in the standard basis $(e_{23},e_{13},e_{12})$:
\[{\textstyle\bigwedge}^2g_t\phi_{a,b}(s) = \begin{pmatrix}
				e^{-2t}		& 	0	 &	 0 \\
				   se^t		 &	 e^t 	 &	0 \\
				-e^{t}(as+b) 	&	0     	&	e^{t} 
			\end{pmatrix}.
\]
		So, for any $v=\begin{pmatrix}	p \\
						q \\
						r
			\end{pmatrix}$ in $\bigwedge^2\bbR^3$, $s\in I$ and $t\geq 0$, 
\[
g_t\phi_{a,b}(s)v = \begin{pmatrix}
					e^{-2t}p		\\
					e^t(sp + q)	\\
				e^{t}[-(as+b)p + r]
			\end{pmatrix}.
\]
	Now observe that
	\[
	\max\{\abs{s_0p+q},\abs{s_1p+q}\}\geq \min\left\{\frac{\abs{p}(s_1-s_0)}{2},\frac{\abs{q}(s_1-s_0)}{\abs{s_0}+\abs{s_1}}\right\},
	\]
	so that if $(p,q,r)\in\bbZ^3\setminus\{0\}$, then
\[
		\max_{s\in \{s_0,s_1\}} \{\abs{sp+q},\abs{-(as+b)p+r} \}\geq C_I,
\]
where
\[
		C_I= \min\left\{\frac{s_1-s_0}{2},\frac{s_1-s_0}{\abs{s_0}+\abs{s_1}},  1\right\}>0,
\]
		because if $(p,q)=(0,0)$ then $\abs{-(as+b)p+r}=\abs{r}\geq 1$. 
		So \eqref{eq:ext2} follows.
	\end{proof}

    By combining the above results we obtain the following: 
	
	\begin{prop} \label{thm:notclosed}  Let $V$ be a finite-dimensional representation of $G$ defined over $\bbQ$ and let $v_0\in V(\bbQ)\setminus \{0\}$. Suppose that $Gv_0$ is not Zariski closed. Then given $C>0$ there exists $R>0$ such that the following holds: There exists $t_0>0$ such that for any $t>t_0$  and any $\gamma\in\Gamma$, if
	\begin{equation}\label{eq:bnd-C}
	\sup_{s\in I} \norm{g_t\phi_{a,b}(s)\gamma v_0}\leq C,
	\end{equation}
	then there exists $v\in \bbZ^3\setminus\{0\}$ such that
	\begin{equation} \label{eq:bnd-R}
	\sup_{s\in I} \norm{g_t\phi_{a,b}(s)v}\leq R.
	\end{equation}
	\end{prop}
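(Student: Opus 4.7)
The plan is to combine Theorem~\ref{prop:reduction_to_eigenvector}, Lemma~\ref{lem:reduction_to_fundamental_rep} and Lemma~\ref{lem:exterior_square} in sequence, with a preliminary reduction from the hypothesis ``$Gv_0$ is not Zariski closed'' to the hypothesis that we are dealing with an unstable vector, so that Theorem~\ref{prop:reduction_to_eigenvector} (which requires unstability) becomes applicable.

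First I would use the Hilbert--Mumford criterion, together with Kempf's rationality result, to extract a $\bbQ$-rational cocharacter $\lambda\colon\Gm\to G$ such that $v_0^\ast:=\lim_{\tau\to 0}\lambda(\tau)v_0$ exists in $V(\bbQ)\setminus Gv_0$. Writing $v_0=v_0^\ast+u$ for the $\lambda$-weight decomposition, the component $u\in V(\bbQ)\setminus\{0\}$ has only strictly positive $\lambda$-weights and is therefore unstable. Applying Theorem~\ref{prop:reduction_to_eigenvector} to $u$ then yields an irreducible $\bbQ$-representation $W$, a highest weight vector $w'\in W(\bbQ)$, an element $g_0\in G(\bbQ)$, and constants $C_0,\beta>0$ such that
\[
\|gg_0 w'\|\leq C_0\|gu\|^{\beta}\qquad\text{for all }g\in G.
\]

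To convert the hypothesis $\sup_s\|g_t\phi_{a,b}(s)\gamma v_0\|\leq C$ into a bound of the same form on $\sup_s\|g_t\phi_{a,b}(s)\gamma g_0 w'\|$, note that $\|gu\|\leq\|gv_0\|+\|gv_0^\ast\|$. The term $\|gv_0^\ast\|$ must be controlled uniformly; since $\dim Gv_0^\ast<\dim Gv_0$, one can iterate the above construction on $v_0^\ast$, a descent on orbit dimension that terminates after at most $\dim V$ steps. The residual closed-orbit term is absorbed into the constants by a compactness argument exploiting the fact that the terminal vector is fixed by (at least) a non-trivial rational torus, namely $\mathrm{Im}(\lambda)$. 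I expect this iterative bookkeeping, and in particular handling the residual closed-orbit vector, to be the main technical obstacle; the bound degrades at each iteration step, but only by multiplicative constants depending on $V$ and the $\lambda_i$'s produced, not on $t$ or $\gamma$.

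Once $\sup_{s\in I}\|g_t\phi_{a,b}(s)\gamma g_0 w'\|\leq C_1$ holds uniformly, Lemma~\ref{lem:reduction_to_fundamental_rep} applied with $\psi=\phi_{a,b}$, $h_1=g_t$, $h_2=\gamma g_0$ yields a constant $C_2$, depending only on $C_1$ and the highest weight $n_1\omega_1+n_2\omega_2$ of $w'$, such that
\[
\min_{i=1,2}\sup_{s\in I}\|g_t\phi_{a,b}(s)\gamma g_0 w_i\|\leq C_2,
\]
where $w_1\in\bbR^3$ and $w_2\in\bigwedge^2\bbR^3$ are the standard highest weight vectors of the fundamental representations. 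Lemma~\ref{lem:exterior_square} rules out the $i=2$ possibility: after clearing denominators so that $N_2\gamma g_0 w_2\in \bigwedge^2\bbZ^3\setminus\{0\}$ for an integer $N_2$ depending only on $g_0$, for $t>t_0:=\log(N_2C_2/C_I)$ we would obtain $C_Ie^t\leq N_2 C_2$, a contradiction. Hence the $i=1$ case holds, and clearing denominators in $\gamma g_0 w_1\in\bbQ^3$ produces the required $v\in\bbZ^3\setminus\{0\}$ with $\sup_s\|g_t\phi_{a,b}(s)v\|\leq R$ for an appropriate $R$ depending only on $C$.
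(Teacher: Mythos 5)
Your overall plan --- reduce to an unstable vector, invoke Theorem~\ref{prop:reduction_to_eigenvector}, pass to fundamental representations via Lemma~\ref{lem:reduction_to_fundamental_rep}, and eliminate the exterior-square case via Lemma~\ref{lem:exterior_square} --- is exactly right, and your handling of steps 2--4 (including clearing denominators and choosing $t_0$) matches the paper. The gap is in the initial reduction from ``$Gv_0$ not Zariski closed'' to ``unstable.''

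The decomposition $v_0=v_0^\ast+u$ into $\lambda$-weight components is not compatible with the action of arbitrary elements of $G$, so the hypothesis $\sup_s\|g_t\phi_{a,b}(s)\gamma v_0\|\leq C$ gives \emph{no} control over $\sup_s\|g_t\phi_{a,b}(s)\gamma v_0^\ast\|$: as $\gamma$ runs over the non-compact group $\Gamma$ and $t\to\infty$, the two pieces $g_t\phi_{a,b}(s)\gamma v_0^\ast$ and $g_t\phi_{a,b}(s)\gamma u$ can each become arbitrarily large while their sum stays bounded. The proposed ``iterative bookkeeping'' cannot repair this: if $Gv_0^\ast$ is Zariski closed the descent terminates immediately with nothing gained, and being fixed by a rational torus does not give any compactness --- $\{\gamma v_0^\ast:\gamma\in\Gamma\}$ is discrete but unbounded, and then hit by $g_t\phi_{a,b}(s)$. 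So the conversion of the bound on $\|g\gamma v_0\|$ into a bound on $\|g\gamma u\|$ fails.

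The paper avoids this entirely by using \cite[Lemma~1.1]{Kem78}: the boundary $\partial(Gv_0)$ is the zero set of a $G$-equivariant polynomial map $f\colon V\to V'$ defined over $\bbQ$. Equivariance gives $f(g\gamma v_0)=g\gamma f(v_0)$, and because $f$ is a polynomial it maps the bounded set $\{\|v\|\leq C\}$ into a bounded set; therefore $\|g_t\phi_{a,b}(s)\gamma v_0\|\leq C$ immediately yields $\|g_t\phi_{a,b}(s)\gamma f(v_0)\|\leq C'$ for some $C'$ depending only on $C$ and $f$. Since $v_0\notin\partial(Gv_0)$ we have $f(v_0)\neq 0$, while $0\in f(\overline{Gv_0})\subset\overline{Gf(v_0)}$, so $f(v_0)$ is an unstable rational vector and Theorem~\ref{prop:reduction_to_eigenvector} applies directly. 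Replacing your $(v_0^\ast,u)$-decomposition with this polynomial-map reduction closes the gap; the remainder of your argument then goes through unchanged.
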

	
	\begin{proof} 
Let $S$ denote the boundary of $Gv_0$. By \cite[Lemma 1.1]{Kem78}, there exists a representation $V'$ of $G$ and a $G$-equivariant polynomial map $f\colon V\to V'$, both defined over $\bbQ$, such that $S=f^{-1}(0)$. It follows that $f(v_0)\in V'(\bbQ)$ is unstable in $V'$, and there exists a constant $C'>0$ and a norm on $V'$ such that \eqref{eq:bnd-C} holds for $(f(v_0), V', C')$ in place of $(v_0, V, C)$. Hence by replacing $v_0$ with $f(v_0)$ we may assume that $v_0$ is unstable in $V$.
		
		Now we can apply \Cref{prop:reduction_to_eigenvector}, and conclude that there exists an irreducible representation $W$ of $G$ defined over $\bbQ$, a highest weight vector $w\in W(\bbQ)$, an element $g_0\in G(\bbQ)$, and a constant $D>0$ such that for any $t\geq 0$ and $\gamma\in\Gamma$ if \eqref{eq:bnd-C} holds, then 
		\begin{equation*}
		\sup_{s\in I}\norm{g_{t}\phi_{a,b}(s)\gamma g_0 w}\leq D.
		\end{equation*}
		Combined with \Cref{lem:reduction_to_fundamental_rep}, this implies that there exists $D'>0$, such that for any $t\geq 0$ and $\gamma\in \Gamma$, if \eqref{eq:bnd-C} holds, then 
		\begin{equation} \label{eq:bnd-Dprime}
		\min_{1\leq j\leq 2}\sup_{s\in I} \norm{g_{t}\phi_{a,b}(s)\gamma g_0 w_j}\leq D'.
		\end{equation}
		Since $g_0\in G(\bbQ)$, there exists $N\in\bbN$ such that $N\cdot\Gamma g_0w_1\subset W_1(\bbZ)=\bbZ^3$ and $N\cdot\Gamma g_0w_2\subset W_2(\bbZ)=\bigwedge^2\bbZ^3$.
		
		By \Cref{lem:exterior_square}, for any $t\geq 0$ and $\gamma\in\Gamma$, since $v_2:=N\gamma g_0w_2\in \bigwedge^2\bbZ^3\setminus\{0\}$, we have  $\sup_{s\in I} \norm{g_t\phi_{a,b}(s)v_2}\geq C_Ie^t$. Set $R=ND'$ and $t_0:=\log{RC_I^{-1}}$. Then for any $t>t_0$ and $\gamma\in\Gamma$, we have
		\[
		\sup_{s\in I} \norm{g_{t}\phi_{a,b}(s)(\gamma g_0w_2)}> R/N=D';
		\]
	    and hence if \eqref{eq:bnd-Dprime} holds, then $\sup_{s\in I} \norm{g_{t}\phi_{a,b}(s)(\gamma g_0w_1)}\leq D'$. 
	    
	    Therefore, for any $t\geq t_0$ and $\gamma\in\Gamma$, if \eqref{eq:bnd-C} holds, then  \eqref{eq:bnd-Dprime} holds, so the non-zero vector $v=N\gamma g_0w_1\in\bbZ^3$ satisfies \eqref{eq:bnd-R}, as desired.
		\end{proof}
	
	\section{Dynamics in the standard representation and~Diophantine~conditions}
	In this section we relate asymptotic dynamics of the $g_t$-action on the curves $\{\phi_{a,b}(s)v:s\in I\}$ for nonzero $v\in\bbZ^3$ in the standard representation, with some Diophantine approximation properties of the vector $(a,b)$. 
Our first lemma characterizes the condition $(a,b)\in\Wtwo$ in terms of vectors of bounded size in the lattices $g_t\phi_{a,b}(s)\Z^3$, $s\in I$.

	\begin{lem}\label{lem:interpretation_of_Wtwo}
		The following 
		are equivalent:
		\begin{enumerate}[label=(\arabic*)]
		    \item[\rm (1)] 
		    $(a,b)\in\Wtwo$.
		\item[\rm (2)] 
		There exist $t_i\to\infty$, $\{v_i\}\subset \bbZ^3\setminus\{0\}$ and $R>0$ such that for all $i$,
		\begin{equation} \label{eq:temp6}
		\sup_{s\in I}\norm{g_{t_i}\phi_{a,b}(s)v_i}\leq R.
		\end{equation}

		\end{enumerate} 
	\end{lem}
	
	\begin{proof}
		We write $v_i=\begin{pmatrix}
		p_{1,i}\\p_{2,i}\\q_i
		\end{pmatrix}\in \Z^3\setminus\{0\}$.
It follows that
	\begin{equation} \label{eq:temp6a}
		g_{t_i}\phi_{a,b}(s)v_i=\begin{pmatrix}
		e^{2t_i}\bigl((bq_i+p_{1,i})+(aq_i+p_{2,i})s\bigr)\\
		e^{-t_i}p_{2,i}\\
		e^{-t_i}q_i
		\end{pmatrix}.
		\end{equation}
		
		(2)$\Rightarrow$(1): Let $s_0<s_1$ such that $I=[s_0,s_1]$ and \eqref{eq:temp6} holds for some $R>0$. Let 
		\(
		R_1=\norm{\begin{pmatrix}1& s_0 \\ 1& s_1\end{pmatrix}^{-1}}\cdot R
	    \),
		where $\norm{\cdot}$ denotes the operator norm with respect to the sup-norm. Then for all $i$, the following system of inequalities hold:
		\begin{equation} \label{eq:temp6b}
		\begin{cases}
		\abs{q_ib+p_{1,i}}\leq R_1e^{-2t_i}, \\
		\abs{q_ia+p_{2,i}}\leq R_1e^{-2t_i}, \\
		\abs{q_i}\leq Re^{t_i}.
		\end{cases}
		\end{equation}
		
		\subsubsection*{Case 1} Suppose a subsequence of $\{q_i\}$ is bounded.
		
		After passing to a subsequence, we may assume that $q_i=q$ is a constant. Since $qa$ and $qb$ are fixed, $\bbZ$ is discrete and $R_1e^{-2t_i}\to 0$, the first two equations from \eqref{eq:temp6b} force that $qb+p_{1,i}=0$ and $qa+p_{2,i}=0$ for all large $i$. Since $(p_{1,i},p_{2,i},q_i)\neq 0$, we conclude that $q\neq 0$ and  $(a,b)\in \bbQ^2$.

		\subsubsection*{Case 2} Suppose $\abs{q_i}\to\infty$ as $i\to\infty$.
		
		Put $R_2=R_1R^2$. Then \eqref{eq:temp6b} shows that for all $i$,
		\begin{equation} \label{eq:temp6c}
		\begin{cases}
		\abs{q_ib+p_{1,i}}\leq R_2\abs{q_i}^{-2} \\
		\abs{q_ia+p_{2,i}}\leq R_2\abs{q_i}^{-2}.
		\end{cases}
		\end{equation}
		Therefore 
		$(a,b)\in\Wtwo$. Combining both cases we proved that (2)$\Rightarrow$(1).

		(1)$\Rightarrow$(2): Suppose $(a,b)\in \Wtwo$. By \eqref{eq:Wtwo} we pick a sequence $(p_{1,i},p_{2,i},q_i)\in \bbZ^3$ such that $0\neq q_i\to\infty$ and \eqref{eq:temp6c} holds for some $R_2\geq 0$. Then for $t_i=\log \abs{q_i}$, we get \eqref{eq:temp6b} for $R_1=R_2$ and $R=1$ and $\abs{p_{2,i}}\leq R_1e^{-2t_i}+\abs{a}e^{t_i}$. So in view of \eqref{eq:temp6a}, we get that \eqref{eq:temp6} holds for $R=(\abs{s_0}+\abs{s_1}+1)R_1+\abs{a}+1$, where $I=[s_0,s_1]$. This completes the proof of (1)$\Rightarrow$(2).
	\end{proof}

The second lemma shows that $(a,b)\in\Wtwooo$ if and only if the curve $\phi_{a,b}$ is entirely sent to the cusp under the action of $g_t$ in the space of lattices, along a subsequence of times $t_i$ going to infinity.
	
	\begin{lem}\label{lem:interpretation_of_Wtwooo}
		We have $(a,b)\in\Wtwooo$ if and only if there exist $t_i\to\infty$ and $\{v_i\}\subset \bbZ^3\setminus\{0\}$ such that 
		\begin{equation*}
		\sup_{s\in I}\norm{g_{t_i}\phi_{a,b}(s)v_i}\to 0.
		\end{equation*} 
	\end{lem}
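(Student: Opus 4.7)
The plan is to imitate the proof of \Cref{lem:interpretation_of_Wtwo}, with the only change being that the relevant quantities must tend to zero rather than merely stay bounded. I start from the explicit formula \eqref{eq:temp6a}:
\[
g_{t_i}\phi_{a,b}(s)v_i = \begin{pmatrix} e^{2t_i}\bigl((bq_i+p_{1,i}) + (aq_i+p_{2,i})s\bigr) \\ e^{-t_i}p_{2,i} \\ e^{-t_i}q_i \end{pmatrix},
\]
for $v_i=(p_{1,i},p_{2,i},q_i)^T\in\bbZ^3$. Evaluating the sup at the endpoints $s=s_1,s_2$ and inverting the $2\times 2$ Vandermonde matrix (exactly as in the derivation of \eqref{eq:temp6b}) lets me pass freely between the dynamical statement and the Diophantine inequalities \eqref{eq:Wtwo}.

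For the forward direction, let $R_i := \sup_{s\in I}\norm{g_{t_i}\phi_{a,b}(s)v_i}$ and suppose $R_i\to 0$. Then $|q_ib+p_{1,i}|,|q_ia+p_{2,i}|\leq R_i'e^{-2t_i}$ and $|q_i|\leq R_ie^{t_i}$ for some $R_i'\to 0$. If $\{q_i\}$ has a bounded subsequence, then after extraction $q_i=q$ is constant; the case $q=0$ is impossible because it would force $p_{1,i},p_{2,i}\to 0$ and hence vanish eventually, contradicting $v_i\neq 0$. Thus $q\neq 0$ with $aq,bq\in\bbZ$, so $(a,b)\in\bbQ^2\subset\Wtwooo$, the inclusion being witnessed by the clearing-of-denominators solution. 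Otherwise $|q_i|\to\infty$; after flipping signs assume $q_i>0$, and for any prescribed $C>0$ pick $i$ large enough that $R_i'R_i^2<C$, exhibiting a nonzero solution of \eqref{eq:Wtwo} with constant $C$ and confirming $(a,b)\in\Wtwooo$.

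For the reverse direction, assume $(a,b)\in\Wtwooo$. If $(a,b)=(k/m,\ell/m)\in\bbQ^2$ with $m>0$, then $v:=(-\ell,-k,m)^T\in\bbZ^3\setminus\{0\}$ satisfies $\phi_{a,b}(s)v=(0,-k,m)^T$, so $g_t\phi_{a,b}(s)v\to 0$ along any $t_i\to\infty$ uniformly in $s$. Otherwise, pick $C_n\to 0$ and for each $n$ a nonzero $(p_{1,n},p_{2,n},q_n)\in\bbZ^2\times\bbZ_+$ satisfying \eqref{eq:Wtwo} with constant $C_n$. Irrationality of $(a,b)$ forces $q_n\to\infty$ (a bounded subsequence would, as above, yield $qa,qb\in\bbZ$). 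Setting $t_n:=\log q_n+\tfrac14|\log C_n|$, which tends to $\infty$, a direct check shows each coordinate of $g_{t_n}\phi_{a,b}(s)v_n$ is $O(C_n^{1/4})$ uniformly for $s\in I$, which is what was to be proved.

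The only delicate point is the choice of the scale $t_n$ in the reverse direction: one must simultaneously make the first coordinate small (which bounds $t_n-\log q_n$ from above by $\tfrac12|\log C_n|+O(1)$) and the last two coordinates small (which requires $t_n-\log q_n\to\infty$); the hypothesis $C_n\to 0$ is precisely what opens up a nonempty window between these two constraints. Everything else is a routine adaptation of \Cref{lem:interpretation_of_Wtwo}.
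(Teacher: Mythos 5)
Your proof is correct and follows exactly the route the paper intends: the paper declares this lemma's proof "identical to that of Lemma \ref{lem:interpretation_of_Wtwo}," and you have carried out that adaptation faithfully, including the correct handling of the $q=0$ degeneracy, the reduction of bounded $q_i$ to the rational case, and the appropriately scaled choice $t_n=\log q_n+\tfrac14|\log C_n|$ that makes all three coordinates shrink to zero (whereas $t_n=\log q_n$ from Lemma \ref{lem:interpretation_of_Wtwo} would only give boundedness of the last coordinate).
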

	\begin{proof}
		The proof is identical to that of \Cref{lem:interpretation_of_Wtwo}, and we leave it to the reader. 
	\end{proof}

\subsubsection{Remark} \label{subsec:badly} 
In view of the identification between $X$ and the space of unimodular lattices in $\bbR^3$, given a compact set $K\subset X$, there exists $\delta>0$ such that for any $g\in G$, if $g\bbZ^3=gx_0\in K$, then $\norm{gv}\geq \delta$ for every $v\in\bbZ^3\setminus\{0\}$. 

Suppose $(a,b)\in\Wtwooo$. By \Cref{lem:interpretation_of_Wtwooo}, there exists sequences $t_i\to\infty$ and $v_i\in\bbZ^3\setminus\{0\}$ and $i_0\in\bbN$ such that $\sup_{s\in I} \norm{g_{t_i}\phi_{a,b}(s)v_i}<\delta$ for all $i\geq i_0$. 
Therefore $g_{t_i}\phi_{a,b}(s)x_0\notin K$ for all $s\in I$, and hence $g_{t_i}\lambda_{a,b}(K)=0$ for all $i\geq i_0$. So we say that the measures $g_{t_i}\lambda_{a,b}$ escape to infinity as $i\to\infty$. 

In particular, $\{g_t\phi_{a,b}(s)x_0:t\geq 0\}$ is unbounded in $X$ for every $s\in\bbR$. Hence by Dani correspondence \cite{Dan85}, $(s,as+b)$ is not badly approximable for any $s\in\bbR$.

\bigskip
The stronger condition that the measures $g_t\lambda_{a,b}$ go to the cusp for \emph{all large\/} $t$ can only be satisfied if $(a,b)\in\bbQ^2$; this is the content of the following lemma.

	\begin{lem} \label{lem:inter_of_Q}
	The following statements are equivalent:
	\begin{enumerate}[label=(\arabic*)]
	    \item\label{itm:q} $(a,b)\in \bbQ^2$.
	    \item\label{itm:contract-v}There exists $v\in\bbZ^3\setminus\{0\}$ such that $\sup_{s\in I} \norm{g_t\phi_{a,b}(s)v}\to 0$ as $t\to\infty$. 
	    \item\label{itm:unif-bound}
	    There exists $R>0$ such that for each large $t>0$, there exists $v\in\bbZ^3\setminus\{0\}$ such that
	\begin{equation} \label{eq:uniform}
		\sup_{s\in I}\norm{g_{t}\phi_{a,b}(s)v}\leq R.
		\end{equation}
    \end{enumerate}
    \end{lem}

\begin{proof}
\ref{itm:q}$\Rightarrow$\ref{itm:contract-v}: Suppose that $a=p_1/q$ and $b=p_2/q$ for some $p_1,p_2\in\bbZ$ and $q\in\bbN$. Let $v=\begin{pmatrix}-p_2\\ -p_1 \\ q\end{pmatrix}\in\bbZ^3\setminus\{0\}$. Then for any $s\in\bbR$ and $t\geq 0$,
		\begin{equation*} 
		g_{t}\phi_{a,b}(s)v=g_t\begin{pmatrix}1 & s & as+b\\ & 1 \\&&1\end{pmatrix}\begin{pmatrix}-p_2\\ -p_1 \\ q\end{pmatrix}
		=g_{t}\begin{pmatrix} -p_2-sp_1+(as+b)q \\ -p_2\\ q \end{pmatrix}
		=e^{-t}\begin{pmatrix} 0 \\ -p_2\\ q \end{pmatrix}. 
		\end{equation*}
		Therefore \ref{itm:contract-v} holds. 
		
		\ref{itm:contract-v}$\Rightarrow$\ref{itm:unif-bound}: This is is obvious. 
		
\ref{itm:unif-bound}$\Rightarrow$\ref{itm:q}: We observe using \eqref{eq:temp6b} that \ref{itm:unif-bound} implies the following: for any $c>0$, and all sufficiently large enough $T>0$, setting $t=\log T-\log R_1$, there exists  $(p_{1},p_{2},q)\in\bbZ^3\setminus\{0\}$ such that, all the following inequalities hold:
		\begin{equation*} 
		\begin{cases} \label{eq:eqtemp6b2}
		\abs{q}\leq R_1e^{t}=T,\\
		\abs{qb+p_{1}}\leq R_1e^{-2t}\leq cT^{-1}, \\
		\abs{qa+p_{2}}\leq R_1e^{-2t}\leq cT^{-1}. 
		\end{cases}
		\end{equation*}
This implies that $a$ and $b$ are both singular real numbers.
But singular real numbers are rational, see \cite{Khintchine1926} or \cite[Remark before Theorem~XIV]{Cassels57}. 
\end{proof}

\subsubsection{Remark} \label{rem:Q-diverge}  Suppose $(a,b)\in\bbQ^2$. 
Given a compact set $K\subset X$, let $\delta>0$ be as in Remark~\ref{subsec:badly}. By the  proof of \ref{itm:q}$\Rightarrow$\ref{itm:contract-v} in \Cref{lem:inter_of_Q}, there exists $v\in\bbZ^3\setminus\{0\}$ and $t_0>0$ such that $\norm{g_{t}\phi_{a,b}(s)v}<\delta$ for all $t\geq t_0$ and all $s\in\bbR$. Therefore $g_t\phi_{a,b}(s)x_0\not\in K$ for all $t\geq t_0$ and for all $s\in\bbR$. Hence $g_t\lambda_{a,b}(K)=0$ for all $t\geq t_0$. Therefore $g_t\lambda_{a,b}$ escapes to infinity as $t\to\infty$.

\bigskip

Finally, we have a version of Lemma~\ref{lem:interpretation_of_Wtwo} for the behavior on average of the measures $g_t\lambda_{a,b}$; this will relate to the set $\Wtwoo$.
For $R>0$, define
	\begin{equation} \label{eq:defOfIR}
	\cI_{R}= \left\{ t\in[0,+\infty) :\ \exists\, v\in\bbZ^3\setminus\{0\}\ \mbox{such that}\ \sup_{s\in I}\norm{g_t\phi_{a,b}(s)v} < R\right\}.
	\end{equation}
		
	\begin{lem}\label{lem:density} The following are equivalent:
	\begin{enumerate}
		\item[\rm (1)] For every $R>0$, 
		\begin{equation} \label{eq:positive_upper_density}
		\limsup_{T\to\infty}\frac{\abs{\cI_R\cap[0,T]}}{T}>0.
		\end{equation}
		\item[\rm (2)]There exists $R>0$ such that \eqref{eq:positive_upper_density} holds. 
		\item[\rm (3)] $(a,b)\in\Wtwoo$.
	\end{enumerate}
	\end{lem}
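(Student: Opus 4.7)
The plan is to establish (1) $\Rightarrow$ (2) $\Rightarrow$ (3) $\Rightarrow$ (1). The first implication is immediate from the definitions.

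For (3) $\Rightarrow$ (1), I would argue by direct construction, paralleling the converse half of \Cref{lem:interpretation_of_Wtwo}. Given $(a,b) \in \Wtwoo$, pick $\varepsilon > 0$ and an infinite sequence $v_n = (p_{1,n}, p_{2,n}, q_n)^T \in \bbZ^3$ with $q_n \to \infty$ and $M_{v_n} := \max(|q_n b + p_{1,n}|, |q_n a + p_{2,n}|) \leq q_n^{-(2+\varepsilon)}$. Using \eqref{eq:temp6a} and the equivalence $\sup_{s \in I}|{(bq+p_1)+(aq+p_2)s}| \asymp_I \max(|bq+p_1|, |aq+p_2|)$, the set $J_{v_n} := \{t \geq 0 : \sup_{s \in I}\|g_t \phi_{a,b}(s) v_n\| < R\}$ is an interval of length at least $\tfrac{\varepsilon}{2}\log q_n - C_R$ contained in $[\log q_n - \log R,\ (1+\tfrac{\varepsilon}{2})\log q_n + C_R]$. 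Taking $T_n$ to be the right endpoint, this gives $|\cI_R \cap [0,T_n]|/T_n \geq \varepsilon/(2+\varepsilon) - o(1) > 0$ for every $R > 0$, which proves (1).

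For the substantive direction (2) $\Rightarrow$ (3), I would argue by contrapositive. Assume $(a,b) \notin \Wtwoo$, so for every $\eta > 0$ there exists $q_0 = q_0(\eta)$ such that $M_v > q^{-(2+\eta)}$ for every $v = (p_1,p_2,q)^T$ with $q \geq q_0$. By the length calculation above, each such interval $J_v$ satisfies $|J_v| \leq \tfrac{\eta}{2}\log q + C_R$. Fix $R > 0$ and decompose $\cI_R \cap [0,T] \subseteq \bigcup_v J_v$ over the integer vectors with $|q| \leq Re^T$ and $M_v \leq R_1 R^2/q^2$. These are exactly the two-dimensional Dirichlet-type approximants of $(a,b)$, and a Minkowski-type subtraction argument (applied to the convex symmetric body $\{v : |q| \leq N,\ |qa+p_2|, |qb+p_1| \leq R_1 R^2/N\}$, or equivalently by comparing two approximants $v, v'$ via $v'-v$) shows their $q$-values grow at least geometrically, so only $O_R(T)$ of them have $q \leq Re^T$. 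Combining this sparsity with the length bound and exploiting the fact that each $J_v$ is centred near $\log q$ with width $o(\log q)$, the summation can be arranged to yield
\[
|\cI_R \cap [0,T]| \leq \eta\, T + O_R(\log T).
\]
Since $\eta > 0$ was arbitrary, $\limsup_{T\to\infty}|\cI_R \cap [0,T]|/T = 0$, contradicting (2).

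The main obstacle is making rigorous the last displayed inequality in the (2) $\Rightarrow$ (3) argument: the crude bound obtained by summing $|J_v|$ independently over approximants overcounts heavily, and one needs to exploit both the sparsity of Dirichlet-type approximants at each logarithmic scale and the specific localization of each $J_v$ in a window of length $\tfrac{\eta}{2}\log q$ around $\log q$. The inverse-rate $\eta$ from the negation of $\Wtwoo$ must be shown to control the density linearly rather than just through the cardinality of contributing approximants, which is exactly where the two-dimensional analogue of the best-approximant geometric growth enters.
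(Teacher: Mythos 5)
Your proposal is correct and follows essentially the same strategy as the paper's proof: the implications are proved in the same cyclic order, the (3)$\Rightarrow$(1) computation is identical in substance (explicit windows $[\,(1+c_1\eps)\log q_n,\ (1+c_2\eps)\log q_n\,]\subset\cI_R$ of length comparable to $\eps\log q_n$), and for the substantive direction the two key ingredients are the same: the gap-growth estimate for consecutive approximants (the paper's Claim~1, your ``Minkowski-type subtraction argument'') together with the observation that the length of $E_q$ controls the quality of the $q$-th approximation. The only structural difference is that the paper argues (2)$\Rightarrow$(3) directly (positive density forces some $|E_{q_n}|\gtrsim\eps\log q_n$, hence a good approximation) while you run the contrapositive; this is an inessential reformulation.

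One imprecision worth flagging, since it sits exactly at the point you single out as the ``main obstacle'': saying that the $q$-values ``grow at least geometrically'' is too weak to make the summation close. If $q_{i+1}\geq\alpha q_i$ for a fixed $\alpha>1$, then the number of contributing $q_i$ up to scale $T$ is $O(T)$ (not $O(\log T)$), and $\sum_i\bigl(\tfrac{\eta}{2}\log q_i+C_R\bigr)$ is of order $\eta T^2$, which does not give your displayed bound; worse, for large $i$ the interval $J_{q_i}$, of length $\asymp\eta\log q_i$, is much longer than the gap $\log\alpha$ between consecutive $\log q_i$, so the $J_{q_i}$ can tile an interval of linear length in $T$. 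What the subtraction argument actually yields (and what the paper's Claim~1 records) is the much stronger quadratic gap $q'>q^2/C$, equivalently $\log q_{i+1}>2\log q_i-\log C$. This forces $\log q_i\lesssim 2^{-(n-i)}\log q_n$, so $n=O(\log T)$, $\sum_i\log q_i=O(\log q_n)$, and the total is $\eta T+O_R(\log T)$ as you want. So the plan is sound, but the growth rate you need (and that the Minkowski subtraction gives) should be recorded as quadratic in $q$, not merely geometric.
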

	
We defer the proof of this result to \Cref{sec:averages}. 

\section{Reduction of linear dynamics to the standard representation}
	
	The following is one of the main technical results in this article.
It shows that \Cref{thm:notclosed} still holds even if the orbit $G\cdot v_0$ is closed, as long as it is not reduced to $\{v_0\}$.

	\begin{thm}
	\label{prop:consequence_of_linear_focusing}
	Let $V$ be a finite-dimensional representation of $G$ over $\bbQ$ and $v_0\in V(\bbQ)\setminus\{0\}$ such that $v_0$ is not $G$-fixed. Then given $C>0$ there exists $R>0$ and $t_0>0$ such that the following holds: For every $t>t_0$, if there exists $\gamma\in\Gamma$ such that
	\begin{equation*}\label{eq:temp7-1}
		\sup_{s\in I}\norm{g_{t}\phi_{a,b}(s)\gamma v_0}\leq C,
		\end{equation*}
	then there exists $v\in \bbZ^3\setminus\{0\}$ such that 
	\begin{equation*}\label{eq:boundedInStdRep-1}
		\sup_{s\in I}\norm{g_{t}\phi_{a,b}(s)v}\leq R.
		\end{equation*}
	\end{thm}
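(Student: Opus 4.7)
The argument is by contradiction. Suppose the conclusion fails for some $C>0$; negating the statement with $R_n=n$ and $t_{0,n}=n$ yields sequences $t_n\to\infty$ and $\gamma_n\in\Gamma$ such that $\sup_{s\in I}\norm{g_{t_n}\phi_{a,b}(s)\gamma_n v_0}\leq C$ while, for every fixed $R$, no $v\in\bbZ^3\setminus\{0\}$ satisfies $\sup_{s\in I}\norm{g_{t_n}\phi_{a,b}(s)v}\leq R$ for cofinitely many $n$. A simple projection argument, decomposing $V=\bigoplus V_i$ into $\bbQ$-irreducibles and picking a component of $v_0$ that is not $G$-fixed, reduces to the case that $V$ is $\bbQ$-irreducible and non-trivial.

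The proof then splits according to whether $Gv_0$ is Zariski closed. If not, \Cref{thm:notclosed} applies along the sequence and produces the forbidden $v\in\bbZ^3\setminus\{0\}$ with a bound depending only on $C$, contradicting the negation. The substantive case is $Gv_0$ Zariski closed. Here the plan is to show that the hypotheses force $(a,b)\in\bbQ^2$; once this is established, \Cref{lem:inter_of_Q} produces a fixed $v\in\bbZ^3\setminus\{0\}$ uniformly bounded along the $g_t$-flow, again contradicting the negation.

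To pin down $(a,b)\in\bbQ^2$ in the closed-orbit case, I would expand $\phi_{a,b}(s)\gamma_n v_0$ in the $S$-weight decomposition of $V$. Since $g_t$ dilates each weight space $V_\chi$ by $e^{\langle\chi,\mathrm{diag}(2,-1,-1)\rangle t}$, the uniform bound on the left-hand side forces the components in every strictly expanding weight space to decay like $e^{-k t_n}$ for appropriate $k>0$. On the other hand, Matsushima's theorem applied to the closed orbit $Gv_0$ gives that $G_{v_0}$ is reductive, from which $\Gamma v_0\subset V(\bbQ)$ is seen to lie in $\tfrac{1}{N}V(\bbZ)$ for some fixed $N$, and hence forms a discrete subset of $V$. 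The decaying components, being rational with uniformly bounded denominators, must therefore vanish \emph{identically} for $n$ large. These vanishing conditions are polynomial equations in the entries of $\gamma_n v_0$ with coefficients depending polynomially on $(a,b)$; using the explicit weight combinatorics of irreducible $\SL_3$-representations (and of the $\SL_2$-subgroup $H_1$ appearing in the Levi of $P_1$) one shows that after passing to a subsequence $\gamma_n v_0$ is eventually constant while the residual constraint pins $(a,b)$ to $\bbQ^2$.

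The main obstacle is precisely this last representation-theoretic bookkeeping: one must verify that the vanishing of the high-weight coefficients really forces both $a$ and $b$ to be rational, rather than merely constraining $(a,b)$ to an irrational algebraic locus. It is here that the low-rank structure of $G=\SL_3$ enters essentially, matching the authors' own remark that generalization to $\SL_n$ with $n>3$ introduces genuine new complications from the richer lattice of intermediate subgroups.
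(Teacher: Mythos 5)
Your high-level skeleton is right: split on whether $Gv_0$ is Zariski closed, dispatch the non-closed case via \Cref{thm:notclosed}, and in the closed case show $(a,b)\in\bbQ^2$ and then invoke \Cref{lem:inter_of_Q} (or an explicit rational lattice vector) to finish by contradiction. But the closed-orbit case as you sketch it has two genuine gaps.

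First, you never address the possibility that $\{\gamma_nv_0\}$ is \emph{unbounded}. Your discreteness argument (``$\Gamma v_0$ is discrete, so decaying components of $\gamma_nv_0$ vanish, so $\gamma_nv_0$ is eventually constant'') only works once you already know the sequence lies in a bounded region of $V$. The paper spends a nontrivial amount of effort on exactly this: it factors $g_t=c_tb_t$, projects onto the $H$-fixed part $V_1$ and its complement $V_2$, and shows that if the orbit is unbounded one reaches a contradiction -- either $b_{\eps t_i}g_{a,b}\gamma_iv_0\to 0$ (violating Zariski-closedness, when $n_1>n_2$), or \Cref{lem:n1<n2} forces boundedness (when $n_1\le n_2$). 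You need this dichotomy, or something equivalent, before the discreteness step can start.

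Second, the decisive step ``the residual constraint pins $(a,b)$ to $\bbQ^2$'' is left as weight-combinatorics bookkeeping, but that is not where the real content lies. In the paper, once $\gamma_iv_0=v_0$ one shows $b_{t_i}g_{a,b}v_0\to v_\infty$ where the stabilizer of $v_\infty$ is exactly the maximal reductive subgroup $N_G(H)$; the crux is a rationality claim that $v_\infty=h_0v_0$ for some $h_0\in G(\bbQ)$, proved by showing $Z(F)$ (for $F=G_{v_0}$) is a $\bbQ$-split torus -- the $\bbQ$-anisotropic alternative is killed by a characteristic-polynomial argument. Only then does the projective convergence $b_{t_i}g_{a,b}h_0^{-1}\langle e_3\rangle\to\langle e_3\rangle$ give $g_{a,b}h_0^{-1}\langle e_3\rangle=\langle e_3\rangle$ and hence $(a,b)\in\bbQ^2$. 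Your proposal offers no mechanism to go from ``vanishing of expanding weight-space components'' to rationality of $(a,b)$; in particular, vanishing constraints alone do not rule out an irrational algebraic locus (you flag this worry yourself), and the missing ingredient is precisely the rational structure of the limit point supplied by the $h_0$ claim. So the proposal, as written, has a gap at both the unbounded case and the final rationality step.
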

	
For the proof of the above theorem we introduce some notation and make some observations.
	
	\subsection{Linear dynamics of $g_t$ and \texorpdfstring{$\phi_{a,b}(s)$}{ϕ_(a,b)(s)} actions}
	\label{subsec:sl2}
	
	Let $V$ be an irreducible real representation of $G=\SL_3(\bbR)$
	over $\bbQ$. 
	Since $G$ is $\bbQ$-split, 
	$V\otimes\bbC$ is $G$-irreducible over $\bbC$.
	
	We express  
		\[
		g_t=c_tb_t\text{, where } b_t=\diag(e^{t/2}, e^{t/2}, e^{-t}) \text { and } c_t=\diag(e^{3t/2},e^{-3t/2},1).
		\]
		Let 
		\begin{equation} \label{eq:u12gab}
		u_{23}(s)={\begin{pmatrix}
		1 & & \\
		& 1 & s \\
		& & 1
		\end{pmatrix}}, \;
		u_{12}(s)={\begin{pmatrix}
		1 & s & \\
		& 1 & \\
		& & 1
		\end{pmatrix}}, \;
		\gab ={ \begin{pmatrix}
		1 & & b\\
		& 1 & a\\
		& & 1
		\end{pmatrix}}.
		\end{equation}
		Then we have $\phi_{a,b}(s)=u_{23}(-a)u_{12}(s)\gab $.
		Since $g_t$ commutes with $u_{23}(-a)$ and $b_t$ commutes with $u_{12}(s)$,
		\begin{equation} \label{eq:imp-gab}
		g_{t}\phi_{a,b}(s)=u_{23}(-a)g_{t}u_{12}(s)\gab \text{ and } g_{t}u_{12}(s)\gab =c_{t}u_{12}(s)b_{t}\gab .
		\end{equation}
		
		Let $H=H_2=
		\begin{pmatrix} \SL_2(\R)\\&1\end{pmatrix}<G$, and consider $V$ as the restricted representation of $H$. We have a decomposition
		$
		V=V_{1} \oplus V_{2},
		$
		where $V_1=\{v\in V\ |\ \forall h\in H,\ hv=v\}$ is the subspace fixed by $H$, and $V_2$ is the complement of $V_1$ stable under the action of $H$. Let $\pi_1$ and $\pi_2$ be the $H$-equivariant projections from $V$ to $V_1$ and $V_2$ respectively.  Since $b_t$ centralizes $H$, $\pi_1$ and $\pi_2$ are also $b_t$-equivariant. 
		
		The following observation is based on \cite[Lemma 2.3]{Sha09Duke1}.
		
		\begin{lem}[Linear dynamics of an $\SL_2$ action] 
		\label{lem:sl2}
		For any $m\geq 0$, let $W$ denote the $(m+1)$-dimensional irreducible representation of $\SL_2(\R)$. Let 
		\[ 
		a_t={\begin{pmatrix}e^t\\&e^{-t}\end{pmatrix}} \text{ and }
		u(s)={\begin{pmatrix} 1 & s \\ & 1\end{pmatrix}}.
		\]
		Then there exists $C_I>0$ such that for any $w\in W$ and $t\in\R$,
		\[
		\sup_{s\in I} \norm{a_tu(s)w}\geq e^{mt}(C_Im)^{-m}\norm{w}.
		\]
		\end{lem}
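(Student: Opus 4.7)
The plan is three-fold: compute the action of $u(s)$ and $a_t$ on a weight basis of $W$, establish a Markov-type polynomial inequality on $I$, and compare norms.

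\emph{Weight-vector dynamics.} Realize $W$ as $\mathrm{Sym}^m(\R^2)$ with the Bombieri inner product, in which the basis $v_k=e_1^{m-k}e_2^k$ ($0\leq k\leq m$) is orthogonal with $\norm{v_k}^2=1/\binom{m}{k}$, and diagonalizes $\{a_t\}$ via $a_t v_k=e^{(m-2k)t}v_k$. Using $u(s)e_1=e_1$ and $u(s)e_2=se_1+e_2$,
\begin{equation*}
u(s)v_k = e_1^{m-k}(se_1+e_2)^k = \sum_{j=0}^k\binom{k}{j}s^{j}v_{k-j}.
\end{equation*}
Hence for $w=\sum_{k=0}^m c_k v_k$ the $v_0$-coordinate of $u(s)w$ is the polynomial $P_w(s):=\sum_k c_k s^k$, and by orthogonality of the $a_t$-weight spaces,
\begin{equation*}
\norm{a_t u(s) w} \geq e^{mt}\abs{P_w(s)}\norm{v_0} = e^{mt}\abs{P_w(s)}.
\end{equation*}

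\emph{Polynomial inequality and norm comparison.} I would next show that there is $C_I>0$, depending only on $I$, such that for every polynomial $P(s)=\sum_{k=0}^m c_ks^k$,
\begin{equation*}
\sup_{s\in I}\abs{P(s)} \geq (C_I m)^{-m}\max_{0\leq k\leq m}\abs{c_k}.
\end{equation*}
This follows from Lagrange interpolation at $m+1$ equally spaced nodes $\tau_i\in I$: each Lagrange basis polynomial $L_i(s)=\prod_{j\neq i}(s-\tau_j)/(\tau_i-\tau_j)$ has denominator at least $(\abs{I}/m)^m\,i!(m-i)!$ (since $\abs{\tau_i-\tau_j}\geq |i-j|\abs{I}/m$), while the monomial coefficients of its numerator, being elementary symmetric functions of the $\tau_j$, are bounded by $\binom{m}{l}M_I^l$ with $M_I=\max_{s\in I}\abs{s}$. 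Recovering $c_k$ from $P=\sum_iP(\tau_i)L_i$ yields the claim. Meanwhile, the Bombieri norm satisfies $\norm{w}^2=\sum c_k^2/\binom{m}{k}\leq 2\max_k c_k^2$ (since $\sum_k 1/\binom{m}{k}\leq 2$), so $\max_k\abs{c_k}\geq\norm{w}/\sqrt 2$. Combining these three ingredients and absorbing $\sqrt 2$ into $C_I$ gives the lemma.

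The main obstacle is the polynomial inequality: keeping the base constant independent of $m$. Equally spaced nodes, combined with the identity $\sum_{i=0}^m 1/(i!(m-i)!)=2^m/m!$ and Stirling's approximation, deliver the stated $(C_I m)^{-m}$ dependence. Any other $K$-invariant inner product on $W$ differs from Bombieri's by factors that can be controlled uniformly (at worst polynomially in $m$), which are absorbed into the final constant.
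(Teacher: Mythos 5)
Your argument follows essentially the same route as the paper's proof: express the $a_t$-highest-weight component of $u(s)w$ as the polynomial $P_w(s)=\sum_k c_ks^k$, bound $\sup_I|P_w|$ from below via its values at $m+1$ equally spaced nodes, and translate this into the stated estimate. The paper simply works with the sup norm on weight coordinates (which bypasses the Bombieri-norm comparison) and cites Gautschi's bound on the inverse Vandermonde matrix where you re-derive the equispaced Lagrange estimate by hand, so the difference is cosmetic. One small numerical slip: $\sum_{k=0}^m\binom{m}{k}^{-1}$ is not $\le 2$ in general (it equals $8/3$ for $m=3,4$), but it is bounded by $3$ uniformly in $m$, which serves just as well once absorbed into $C_I$.
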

		
		\begin{proof}
		Let $e_0,\ldots,e_m$ denote a basis of $W$ such that $a_te_k=e^{(m-2k)t}e_k$ for all $k$. For any $w\in W$, we express $w=\sum_{k=0}^m w_ke_k$, where $w_k\in\R$.
	    Then
		\[
		(u(s)w)_0=\sum_{k=0}^m w_ks^k.
		\]
		Let $\norm{w}=\max_{0\leq k\leq m}\abs{w_k}$.
		Recall that $I=[s_0,s_1]$ and let $\tau_j=s_0+(j/m)(s_1-s_0)$ for $0\leq j\leq m$. Then
		\begin{equation} \label{eq:basic-SL2}
		\sup_{s\in I} \abs{(u(s)w)_0}\geq \max_{0\leq j\leq m}\abs{\sum_{k=0}^m w_k\tau_j^k}\geq  (C_Im)^{-m}\norm{w},
		\end{equation} 
		where $C_I=(1+\max\{\abs{s_0},\abs{s_1}\})/(s_1-s_0)$, 
		from an estimate for the norm of the inverse of the $(m+1)\times (m+1)$-Vandermonde matrix $(\tau_j^k)$ \cite[Theorem~1]{Gautschi62}.
		\end{proof}
		
		\begin{cor} \label{cor:ctu12}
		There exist constants $C_2>0$ and $\beta\geq 3/2$ such that for all $v\in V_2$ and all $t\geq0$,
		\begin{equation} \label{eq:H-ct}
		\sup_{s\in I}\norm{c_tu_{12}(s)v}\geq C_2e^{\beta t}\norm{v}.
		\end{equation}
		\end{cor}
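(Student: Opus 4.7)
The plan is to restrict $V$ as a representation of $H=H_2$, decompose $V_2$ into its $H$-irreducible summands, and then apply Lemma 4.3 to each summand after identifying the relevant one-parameter subgroups.

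Concretely, observe that both $c_t=\diag(e^{3t/2},e^{-3t/2},1)$ and $u_{12}(s)$ lie in $H$, and that under the isomorphism $H\cong \SL_2(\bbR)$ given by the upper-left $2\times 2$ block, they correspond to $a_{3t/2}$ and $u(s)$ in the notation of Lemma 4.3. Decompose $V_2$ as an $H$-module into a direct sum $V_2=\bigoplus_{j=1}^N W^{(j)}$ of irreducible subrepresentations; each $W^{(j)}$ is isomorphic to the unique $(m_j+1)$-dimensional irreducible representation of $\SL_2(\bbR)$, and since by construction no nonzero vector of $V_2$ is $H$-fixed, every $m_j\geq 1$. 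Let $\pi^{(j)}\colon V_2\to W^{(j)}$ denote the corresponding $H$-equivariant projections, and set $M=\max_j \|\pi^{(j)}\|$ (finite, with respect to any fixed norms).

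Given $v\in V_2$, write $v=\sum_{j=1}^N v^{(j)}$ with $v^{(j)}=\pi^{(j)}(v)\in W^{(j)}$. By finiteness of the sum there exists an index $j_0$ with $\|v^{(j_0)}\|\geq \|v\|/(NM')$ for a constant $M'$ depending only on the norm-equivalence between $\|\cdot\|$ on $V_2$ and $\max_j \|\cdot\|_{W^{(j)}}$. Since $\pi^{(j_0)}$ is $H$-equivariant and $c_tu_{12}(s)\in H$, we have
\[
\sup_{s\in I}\norm{c_tu_{12}(s)v}\;\geq\; \tfrac{1}{\|\pi^{(j_0)}\|}\sup_{s\in I}\norm{c_tu_{12}(s)v^{(j_0)}}.
\]
Applying Lemma 4.3 to the $(m_{j_0}+1)$-dimensional $\SL_2$-representation $W^{(j_0)}$ with $t$ replaced by $3t/2$, we obtain
\[
\sup_{s\in I}\norm{c_tu_{12}(s)v^{(j_0)}}\;\geq\; e^{(3/2)m_{j_0}t}\,(C_I m_{j_0})^{-m_{j_0}}\,\norm{v^{(j_0)}}.
\]

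Combining these inequalities, and using that $m_{j_0}\geq 1$ and $t\geq 0$ so that $e^{(3/2)m_{j_0}t}\geq e^{(3/2)t}$, we obtain
\[
\sup_{s\in I}\norm{c_tu_{12}(s)v}\;\geq\; C_2\, e^{(3/2)t}\,\norm{v},
\]
where
\[
C_2\;=\;\frac{1}{NM'\cdot M}\cdot \min_{1\leq j\leq N}(C_I m_j)^{-m_j}\;>\;0.
\]
This proves the corollary with $\beta=3/2$. The only delicate point—not really an obstacle, but the step where one must be careful—is to make sure the decomposition $V_2=\bigoplus W^{(j)}$ uses $H$-equivariant projections so that the $c_tu_{12}(s)$-action commutes with them; the key structural input is that $V_2$ contains no $H$-fixed line, forcing every $m_j\geq 1$ and hence the exponent $3/2$.
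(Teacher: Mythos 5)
Your proof is correct and takes essentially the same route as the paper: decompose $V_2$ into irreducible $H$-components, identify $c_t$ with $a_{3t/2}$ and $u_{12}(s)$ with $u(s)$ under $H\cong\SL_2(\bbR)$, apply Lemma~\ref{lem:sl2} to each component, and use that every component has $m\geq 1$ since $V_2$ contains no $H$-fixed vectors. The paper states this in one sentence; you have just filled in the projection/norm-equivalence bookkeeping, which is routine and handled correctly.
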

		
		\begin{proof}
		Consider the action of $H\cong\SL_2(\R)$ on any irreducible component $W$ of $V_2$.
		By definition of $V_2$, the representation $W$ is non-trivial, i.e. $\dim W=m+1$, with $m\geq 1$.
		Under the identification of $H$ with $\SL_2(\R)$, we have $c_t=a_{3t/2}$ and $u_{12}(s)=u(s)$. Therefore \Cref{lem:sl2} shows that 
		\[
		\sup_{s\in I}\norm{c_tu_{12}(s)w}\geq C_2e^{\frac{3m}{2} t}\norm{w},
		\]
		where $C_2=(C_Im)^{-m}>0$. Since this holds for every $H$-irreducible component $W\subset V_2$, we indeed obtain the desired inequality for all $v$ in $V_2$, with
		\[
		\beta=\min\left\{\frac{3m}{2}\ ;\ m=\dim W-1,\ W\subset V_2\ \mbox{irreducible}\right\}.
		\]
	    \end{proof}
	
	Let $U_{12}=\{u_{12}(s)\}_{s\in\bbR}$, and $V^{U_{12}}$ be the subspace of $U_{12}$-fixed vectors in $V$. Let $\pi_{U_{12}}:V\to V^{U_{12}}$ denote the $\{c_t\}_{t\in\bbR}$-equivariant projection. We apply  \eqref{eq:basic-SL2} of \Cref{lem:sl2} to each $H$-irreducible component of $V$ to obtain the following:
	
		\begin{cor} \label{cor:U12}
	     There exists $C_4=C_4(I,V)>0$ such that for any $v\in V$, 
		\begin{equation*} \label{eq:U_12}
		\sup_{s\in I} \norm{\pi_{U_{12}}(u_{12}(s)v)}\geq C_4\norm{v}.
		\end{equation*}
		\end{cor}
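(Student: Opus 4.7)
The plan is to decompose $V$ into $H$-irreducible pieces (where $H=H_2$ contains $U_{12}$) and apply inequality \eqref{eq:basic-SL2} from the proof of \Cref{lem:sl2} to each piece separately. More concretely: since $H\cong\SL_2(\bbR)$ is reductive, I would write $V=\bigoplus_{i=1}^{k}V^{(i)}$ as a direct sum of $H$-irreducible subrepresentations with $\dim V^{(i)}=m_i+1$, and fix a norm on $V$ that is orthogonal across the components (possible since we only need orthogonality with respect to some chosen inner product, not $H$-invariance). The $U_{12}$-fixed subspace of each $V^{(i)}$ is the one-dimensional highest-weight line $\bbR e_0^{(i)}$, so $\pi_{U_{12}}|_{V^{(i)}}$ is simply the projection onto that line.

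For each component, identifying $V^{(i)}$ with $W$ in \Cref{lem:sl2} and applying \eqref{eq:basic-SL2} (together with equivalence of the chosen norm on $V^{(i)}$ with the max-norm in the basis $e_0^{(i)},\dots,e_{m_i}^{(i)}$) yields
\[
\sup_{s\in I}\norm{\pi_{U_{12}}(u_{12}(s)v^{(i)})}\geq c_i\norm{v^{(i)}}
\]
for some $c_i=c_i(m_i,I)>0$. By the orthogonality of the decomposition,
\[
\norm{\pi_{U_{12}}(u_{12}(s)v)}^2=\sum_{i=1}^k \norm{\pi_{U_{12}}(u_{12}(s)v^{(i)})}^2,
\]
so taking the supremum over $s\in I$ and bounding the sum below by its largest summand evaluated at its own optimal $s$ gives
\[
\sup_{s\in I}\norm{\pi_{U_{12}}(u_{12}(s)v)}^2 \geq \max_{1\leq i\leq k}\sup_{s\in I}\norm{\pi_{U_{12}}(u_{12}(s)v^{(i)})}^2 \geq (\min_i c_i)^2\max_i\norm{v^{(i)}}^2.
\]
Combined with $\norm{v}^2=\sum_i\norm{v^{(i)}}^2\leq k\max_i\norm{v^{(i)}}^2$, this gives the corollary with $C_4=\min_i c_i/\sqrt{k}$.

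The one subtle point is that the optimal $s\in I$ in \eqref{eq:basic-SL2} genuinely depends on the component $v^{(i)}$, so one cannot simply add the componentwise estimates at a common value of $s$. The workaround is the two-step inequality above: pass to the maximum over $i$ (each term evaluated at its own optimal $s$), then pay a factor of $\sqrt{k}$ to go from the maximum of the $\norm{v^{(i)}}$ back to $\norm{v}$ itself. An alternative, perhaps slicker route would be to observe that $v\mapsto \sup_{s\in I}\norm{\pi_{U_{12}}(u_{12}(s)v)}$ is a seminorm that is definite (because any nonzero finite-dimensional $U_{12}$-invariant subspace has a nonzero fixed vector, and $\mathrm{span}\{u_{12}(s)v\}$ is $U_{12}$-invariant), hence a norm on the finite-dimensional space $V$, so equivalent to $\norm{\cdot}$; but the componentwise route stays closer to the quantitative estimate in \Cref{lem:sl2} that the paper already developed.
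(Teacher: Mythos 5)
Your proof matches the paper's intended argument: the paper states \Cref{cor:U12} with only the one-line remark that one applies \eqref{eq:basic-SL2} of \Cref{lem:sl2} to each $H$-irreducible component of $V$, which is exactly what you do. Your explicit combination step --- noting that the optimal $s\in I$ varies with the component, so one takes a maximum over components at their own optimal $s$ and then pays a factor of $\sqrt{k}$ to recover $\|v\|$ --- together with the alternative observation that $v\mapsto\sup_{s\in I}\|\pi_{U_{12}}(u_{12}(s)v)\|$ is a norm on $V$ (via the polynomial-in-$s$ / unipotent-fixed-vector argument), are both correct and simply fill in what the paper leaves implicit.
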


		\subsection{Consequences of description of irreducible representations of $\SL(3,\R)$} Let $\omega=n_1\omega_1+n_2\omega_2$ be the highest weight of $V$, where $n_1,n_2$ are non-negative integers.
		
		\begin{lem} \label{lem:dimV1} $\dim(V_1)=1$ and its weight is $-(n_1-n_2)\omega_2$. In particular, $b_t$ acts on $V_1$ as the scalar multiplication by $e^{-(n_1-n_2)t}$.
		\end{lem}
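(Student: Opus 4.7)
The plan is to compute $V_1 = V^H$ by restricting $V$ from $G = \SL_3(\bbR)$ to the subgroup $H \cong \SL_2(\bbR)$ embedded as the upper-left block, using the classical $\GL_3 \downarrow \GL_2$ branching rule.

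First I would observe that, as an irreducible $\GL_3(\bbR)$-representation, $V$ has highest weight $(n_1+n_2,\, n_2,\, 0)$. By the classical Gelfand--Tsetlin (interlacing) branching rule, the restriction of $V$ to the upper-left $\GL_2$ decomposes as
\[
V = \bigoplus_{(m_1,m_2)} V(m_1,m_2),
\]
with the sum ranging over integer pairs satisfying $n_1+n_2 \geq m_1 \geq n_2 \geq m_2 \geq 0$, each appearing with multiplicity one, where $V(m_1,m_2)$ denotes the $\GL_2$-irreducible of highest weight $(m_1,m_2)$. Restricted to $H = \SL_2$, each $V(m_1,m_2)$ becomes the $(m_1-m_2+1)$-dimensional irreducible, which is $H$-trivial iff $m_1 = m_2$. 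The interlacing constraints $m_1 \geq n_2 \geq m_2$ then force $m_1 = m_2 = n_2$, so there is a unique $H$-trivial summand and $\dim V_1 = 1$.

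To pin down the weight, I would note that this unique $H$-trivial summand is the $\GL_2$-character $\det^{n_2}$, hence has $\GL_2$-weight $(n_2,n_2)$. The third coordinate of the corresponding $\GL_3$-weight is fixed by the constant-sum condition (all weights of an irreducible $\GL_3$-module sum to the sum of the highest weight coordinates, here $n_1 + 2n_2$), giving the $\GL_3$-weight $(n_2, n_2, n_1)$. Modulo the trivial central character $(1,1,1)$ this equals $(n_2-n_1,\, n_2-n_1,\, 0)$, which in the basis $\omega_1,\omega_2$ reads $-(n_1-n_2)\omega_2$. The final sentence of the lemma then follows from $\omega_2(b_t) = e^{t/2} \cdot e^{t/2} = e^t$, so $b_t$ acts on $V_1$ by the scalar $e^{-(n_1-n_2)t}$.

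I do not anticipate serious obstacles: the only care needed is with the translation between $\GL_3$-weights and $\SL_3$-weights expressed in the fundamental-weight basis. As a sanity check, the formula specializes correctly on the fundamental representations: in $V(\omega_1) = \bbR^3$ the $H$-fixed line is spanned by $e_3$, which has weight $-\omega_2$, and in $V(\omega_2) = \bigwedge^2 \bbR^3$ it is spanned by $e_1 \wedge e_2$ of weight $\omega_2$, agreeing with $(n_1, n_2) = (1, 0)$ and $(0, 1)$ respectively.
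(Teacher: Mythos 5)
Your proof is correct, but it takes a genuinely different route from the paper. The paper argues directly from the geometry of $\mathfrak{sl}_3$ weight diagrams (Fulton--Harris): weights of $V$ lie on nested hexagons $\cH_0,\dots,\cH_{m-1}$ (with $m=\min(n_1,n_2)$) and concentric triangles, weights fixed by the Weyl reflection for $H$ are of the form $k\omega_2$, and the paper identifies by inspection the unique such weight whose $H$-string is a single point, namely the vertex of the innermost triangle $\cT_0$. Your argument instead invokes the Gelfand--Tsetlin interlacing branching rule for $\GL_3\downarrow\GL_2$ applied to the highest weight $(n_1+n_2,n_2,0)$, observes that the $H$-trivial constituent requires $m_1=m_2$, and that the interlacing inequalities $m_1\geq n_2\geq m_2$ pin this down to $m_1=m_2=n_2$; the constant-trace constraint then recovers the third weight coordinate. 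The two approaches buy different things: the paper's hexagon picture is self-contained and matches the geometric intuition already set up for the companion Lemma 4.5 (which it proves with the same diagram), whereas your branching-rule argument is shorter, avoids the case analysis over hexagon sides, and generalizes mechanically to $\SL_n \downarrow \SL_{n-1}$. Both translations from $\GL_3$-weights to fundamental-weight coordinates and the final evaluation of $\omega_2(b_t)=e^t$ check out, and your sanity checks on $\bbR^3$ and $\bigwedge^2\bbR^3$ are correct.
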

		\begin{proof} 
		We consider the weight diagram and multiplicities of the weights of an irreducible $\SL_3$-representation as in Figure~\ref{fig:weightDiag1}; see \cite[\S13.2]{FultonHarris}. 
		
		\begin{figure}[htb]
		\centering
		\includegraphics[width=0.5\textwidth]{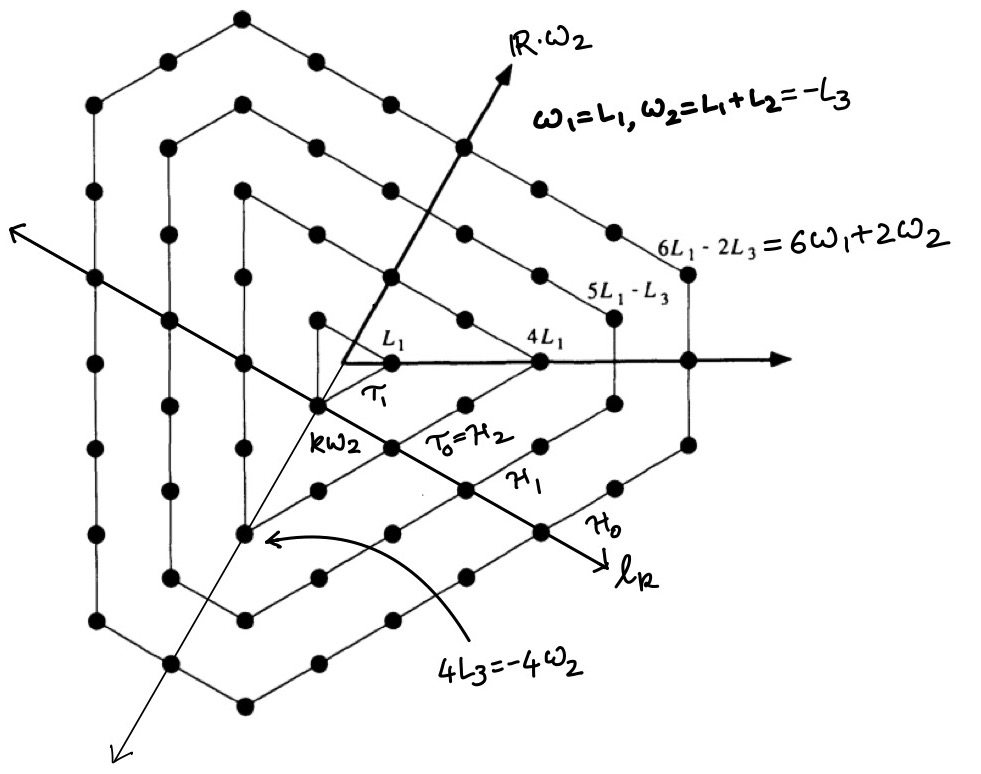}
		\caption{$\SL_3$-representation with the highest weight $6\omega_1+2\omega_2$. (Based on \cite[Figure (13.6)]{FultonHarris}.)}
		\label{fig:weightDiag1}
\end{figure}
		
				The weights of $V$ lie on hexagons $\cH_0,\ldots,\cH_{m-1}$, where $m=\min(n_1,n_2)$, and triangles $\cT_0,\ldots, \cT_{\lfloor \abs{n_1-n_2}/{3}\rfloor}$. We set $\cH_{m}=\cT_0$ and also call it a hexagon, which is degenerate. The multiplicity of a weight on any $\cH_i$ is $i+1$ and on any triangle is $m+1$.
		
		Consider any weight of $V_1$. Then it is fixed by the Weyl reflection corresponding to $H$, so it must be $k\omega_2$ for some $k\in\Z$. Let $\ell_k$ be the line perpendicular to $\omega_2$ and passing through $k\omega_2$. Then $\ell_k\cap \cH_i\neq \emptyset$ if and only if $0\leq i\leq j_k-1$, where $j_k$ is the multiplicity of $k\omega_2$ in $V$, and for each such $i$, $\ell_k\cap \cH_i$ contains the highest and the lowest weights of an irreducible representation of $H$ containing $k\omega_2$.
		
		In the above description, there is exactly one case when we have a trivial $H$-representation; that is,  when $k\omega_2$ is a vertex of the triangle $\cT_0=\cH_{m}$ and $\ell_k\cap\cH_{m}=\{k\omega_2\}$. In particular, $\dim V_1=1$. The dominant vertex of $\cT_0$ is $(n_1-n_2)\omega_1$ or $(n_2-n_1)\omega_2$. A Weyl reflection sends $\omega_1$ to $-\omega_2$. So in both  cases, $-(n_1-n_2)\omega_2$ is a vertex of $\cT_0$ and $k=-(n_1-n_2)$. This proves the claim.
		\end{proof}

		\begin{lem} \label{lem:n1<n2}
		Suppose $n_1\leq n_2$. Then all the weights occurring in $V^{U^{12}}$ are non-negative for the Lie algebra element $\diag(2,-1,-1)$ corresponding to $g_t$. 
		
		In particular, by \Cref{cor:U12}, for any $v\in V$ and $t\geq 0$,	
		\begin{equation} \label{eq:n1n2-sl2}
		\sup_{s\in I} \norm{g_tu_{12}(s)v}\geq C_4\norm{v}.
		\end{equation}
		\end{lem}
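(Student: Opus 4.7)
The plan is to first identify the $S$-weights occurring in $V^{U_{12}}$, then verify under the hypothesis $n_1\le n_2$ that each such weight pairs non-negatively with $\diag(2,-1,-1)$, and finally deduce \eqref{eq:n1n2-sl2} from \Cref{cor:U12} together with the resulting non-contraction of $g_t$ on $V^{U_{12}}$.

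For the first step, $U_{12}$ is the root group of $\alpha_1=\epsilon_1-\epsilon_2$ (where $\epsilon_i\colon\diag(t_1,t_2,t_3)\mapsto t_i$), so a weight vector $v_\chi$ of $V$ lies in $V^{U_{12}}$ if and only if $E_{12}v_\chi=0$, equivalently iff $\chi+\alpha_1$ is not a weight of $V$. Writing weights as triples $(a_1,a_2,a_3)$ in the basis $\epsilon_1,\epsilon_2,\epsilon_3$, the standard description of weights of irreducible $\mathfrak{sl}_3$-representations gives that the weights of $V=V_{n_1\omega_1+n_2\omega_2}$ are exactly the integer points of its Weyl polytope,
\[
\bigl\{(a_1,a_2,a_3)\in\Z_{\ge 0}^3 : a_1+a_2+a_3=n_1+2n_2,\ \max_i a_i\le n_1+n_2\bigr\}.
\]
The condition that $(a_1+1,a_2-1,a_3)$ fails to lie in this set then reduces to $a_1=n_1+n_2$ or $a_2=0$.

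The pairing of $(a_1,a_2,a_3)$ with $\diag(2,-1,-1)$ equals $2a_1-a_2-a_3=3a_1-(n_1+2n_2)$. In the first case ($a_1=n_1+n_2$) it is $2n_1+n_2\ge 0$. In the second case ($a_2=0$), since $a_3\le n_1+n_2$ we have $a_1=n_1+2n_2-a_3\ge n_2$, so the pairing is at least $3n_2-(n_1+2n_2)=n_2-n_1\ge 0$ by the hypothesis, proving the non-negativity claim.

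For \eqref{eq:n1n2-sl2}, observe that $V^{U_{12}}=\bigoplus_\chi\ker(E_{12}|_{V_\chi})$ is a direct sum of subspaces of $S$-weight spaces, so in a $K$-invariant inner product the orthogonal projection $\pi_{U_{12}}$ commutes with $g_t\in S$ and has operator norm $1$; and by what we just proved, $g_t$ acts on $V^{U_{12}}$ with all eigenvalues $\ge 1$ for $t\ge 0$. Hence
\[
\sup_{s\in I}\norm{g_t u_{12}(s)v}\ge\sup_{s\in I}\norm{g_t\pi_{U_{12}}(u_{12}(s)v)}\ge\sup_{s\in I}\norm{\pi_{U_{12}}(u_{12}(s)v)}\ge C_4\norm{v}
\]
by \Cref{cor:U12}. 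The only nontrivial ingredient is the standard description of the weights of an irreducible $\mathfrak{sl}_3$-representation; the rest is elementary bookkeeping, with the hypothesis $n_1\le n_2$ entering precisely to handle the boundary case $a_2=0$.
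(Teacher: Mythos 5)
Your identification of the weights in $V^{U_{12}}$ has a genuine gap: the asserted equivalence ``$E_{12}v_\chi = 0$ iff $\chi+\alpha_1$ is not a weight of $V$'' is only correct in one direction. If $\chi+\alpha_1$ is not a weight then indeed $E_{12}v_\chi=0$, but when weight multiplicities exceed one the map $E_{12}\colon V_\chi\to V_{\chi+\alpha_1}$ can have a nontrivial kernel even though $V_{\chi+\alpha_1}\neq 0$. The correct criterion for $\chi$ to occur in $V^{U_{12}}$ is $a_1\geq a_2$ together with $\dim V_\chi>\dim V_{\chi+\alpha_1}$; your condition ``$a_1=n_1+n_2$ or $a_2=0$'' captures only the contribution of the outermost hexagonal shell $\cH_0$ of the weight diagram, while $V^{U_{12}}$ also receives weights from the inner shells $\cH_1,\dots,\cH_{\min(n_1,n_2)}$, which the paper's proof handles explicitly. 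A concrete counterexample with $n_1=n_2=1$ (so the hypothesis $n_1\leq n_2$ holds): take $V$ the adjoint representation; the zero weight, $(1,1,1)$ in your normalization, has multiplicity $2$ while $(1,1,1)+\alpha_1=(2,0,1)$ has multiplicity $1$, and $\diag(1,1,-2)$ spans $\ker\!\left(E_{12}|_{V_{(1,1,1)}}\right)$ and hence lies in $V^{U_{12}}$, yet $(1,1,1)$ satisfies neither $a_1=2$ nor $a_2=0$.

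The omitted weights do in fact pair non-negatively with $\diag(2,-1,-1)$ (for $(1,1,1)$ the pairing is $0$), so the lemma's statement is not in doubt, only your verification of it. To repair the computation you need to show that \emph{every} weight $(a_1,a_2,a_3)$ occurring in $V^{U_{12}}$ satisfies $a_1\geq n_2$, after which the pairing $3a_1-(n_1+2n_2)\geq n_2-n_1\geq 0$ follows; this is what the paper's argument achieves by locating the relevant highest-weight vectors on the two sides $a_1=n_1+n_2-i$ and $a_2=i$ of each shell $\cH_i$ for $0\leq i\leq n_1$. The final step of your proposal, deducing \eqref{eq:n1n2-sl2} from the weight non-negativity via the projection $\pi_{U_{12}}$ and \Cref{cor:U12}, is correct once the non-negativity is established on all of $V^{U_{12}}$ rather than on the subset you enumerated.
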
 
		
		\begin{proof}
		We consider the weight diagram of an irreducible $\SL_3$-representation as in Figure~\ref{fig:Weights2}; see \cite[Proposition~12.18]{FultonHarris}.
		
		\begin{figure}[htb]
		\centering
		\includegraphics[width=0.5\textwidth]{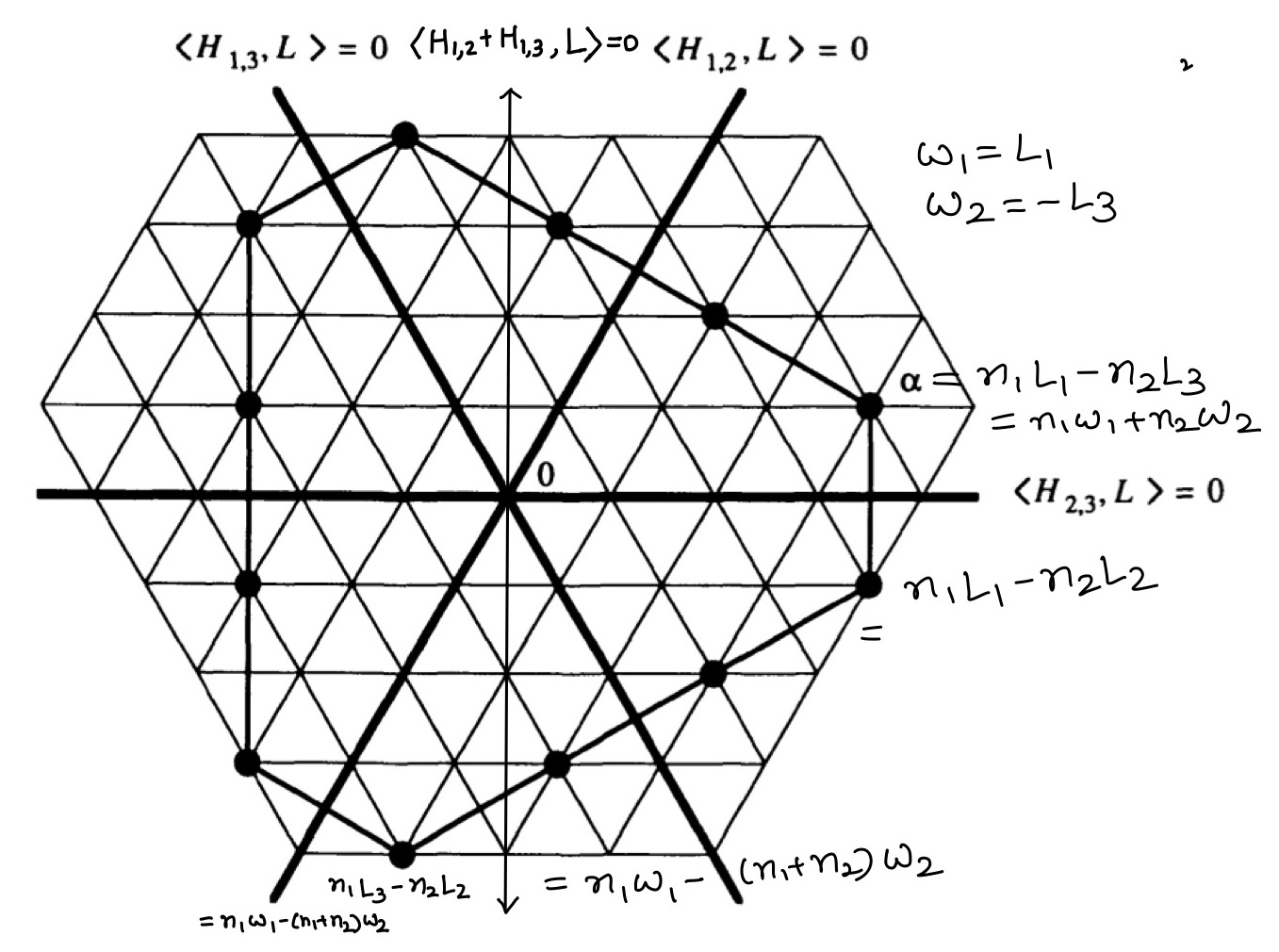}
		\caption{Arrangement of weights of an $\SL_3$-representation. (Based on \cite[Figure (12.14)]{FultonHarris}.)}
		\label{fig:Weights2}
        \end{figure}
		
		We note that the weights on the vertical line passing through the origin in the weight diagram (see~\Cref{fig:Weights2}) vanish on $\diag(2,-1,-1)$ and those on the right half take positive values. 
		
		The weights occurring in $V^{U_{12}}$ are the highest weights of irreducible representations of $H$ in $V$, and they lie on two of the sides of the hexagons $\cH_i$, where \linebreak $0\leq i\leq \min(n_1,n_2)=n_1$; if one draws the set of weights as in \Cref{fig:Weights2}, then for each $i$, one of the sides is a vertical segment from the dominant weight $(n_1-i)L_1-(n_2-i)L_3$ to the weight $(n_1-i)L_1-(n_2-i)L_2$, and the other side is the segment joining the last weight to $(n_1-i)L_3-(n_2-i)L_2$. For $\diag(2,-1,-1)$, all the weights on the vertical segment have constant non-negative value 
		\[
		2(n_1-i)+(n_2-i)\geq n_2-n_1,
		\]
		and the weight $(n_1-i)L_3-(n_2-i)L_2$ has the value 
		\[
		(n_1-i)(-1)-(n_2-i)(-1)=n_2-n_1\geq 0.
		\]
		So all the weights on both  segments have a non-negative value for $\diag(2,-1,-1)$. 
(Note that this is not the case in \Cref{fig:Weights2}, where $n_1=3>1=n_2$.)
		\end{proof}

	\subsection{Proof of \Cref{prop:consequence_of_linear_focusing}} 
		
		If $Gv_0$ is not Zariski closed, then the result follows from \Cref{thm:notclosed}. So we assume that $Gv_0$ is Zariski closed. 
		
		If the theorem fails to hold, then there exist sequences $t_i\to\infty$ and $\gamma_i\in\Gamma$ such that 
		\begin{equation}\label{eq:temp7}
		\sup_{s\in I}\norm{g_{t_i}\phi_{a,b}(s)\gamma_iv_0}\leq C,
		\end{equation}
		and for every sequence $v_i\in\bbZ^3\setminus\{0\}$, 
		\begin{equation} \label{eq:unbndd}
		    \sup_{s\in I}\norm{g_{t_i}\phi_{a,b}(s)v_i}\to\infty.
		\end{equation}
		
		Write $\norm{\cdot}_V$ to denote the operator norm on the linear space $V$.
		By \eqref{eq:temp7} and \eqref{eq:imp-gab}, for $C_1=\frac{C}{\norm{u_{23}(-a)}_{V}}>0$, for all $i$,
		\begin{align}\label{eq:temp8}
		C_1
		\geq\sup_{s\in I} \norm{g_{t_i}u_{12}(s)(\gab \gamma_iv_0)}
		=\sup_{s\in I}\norm{c_{t_i}u_{12}(s)(b_{t_i}\gab \gamma_iv_0)}.  
		\end{align}
		
		We recall that $V=V_1\oplus V_2$, where $H$ acts trivially on $V_1$, and $V_2$ is a sum of non-trivial irreducible representations of $H$, and for $j=1,2$, $\pi_j:V\to V_j$ is the corresponding projection.
		By Corollary \ref{cor:ctu12}, this implies that there exist $C_3>0$ and $\beta\geq 3/2$ such that
		\begin{equation}\label{eq:temp9}
		\forall i,\quad
		\norm{\pi_1(b_{t_i}\gab \gamma_iv_0)}\leq C_3
		\quad\mbox{and}\quad
		 \norm{\pi_2(b_{t_i}\gab \gamma_iv_0)}\leq C_3e^{-\beta t_i}.
		\end{equation}

There are two cases. We will show that each will lead to a contradiction.  
		
		\subsubsection*{Case 1:} {\em $\{\gamma_iv_0\}_{i\in\bbN}$ is unbounded in $V$.}
		
\bigskip
Let $n_1$ and $n_2$ be non-negative integers such that the highest weight of the irreducible $G$-representation $V$ is $n_1\omega_1+n_2\omega_2$.

		First suppose that $n_1>n_2$. The highest eigenvalue of $b_t$ on $V$ is $e^{(n_1/2+n_2)t}$. We pick $\eps>0$ such that $\eps(n_1/2+n_2)<\beta$. By \Cref{lem:dimV1}, $b_t$ acts on $V_1$ by the scalar $e^{-(n_1-n_2)t}$. Therefore, by \eqref{eq:temp9}, for all $i$,
		\begin{align*}
		\norm{b_{\eps t_i}\pi_1(b_{t_i}\gab \gamma_iv_0)}&\leq C_3e^{-\eps(n_1-n_2)t_i} \text{ and }\\
		\norm{b_{\eps t_i}\pi_2(b_{t_i}\gab \gamma_iv_0)}&\leq C_3e^{(-\beta+\eps(n_1/2+n_2))t_i}. 
		\end{align*}
		So $b_{(1+\eps) t_i}\gab \gamma_iv_0\to0$ as $i\to\infty$. This contradicts the fact that $Gv_0$ is Zariski closed.
		
		Hence we must have $n_1\leq n_2$. Then by \eqref{eq:n1n2-sl2} and \eqref{eq:temp8} we get $\{\gab \gamma_iv_0\}_{i\in\bbN}$ is bounded, and it follows that $\{\gamma_iv_0\}_{i\in\bbN}$ is bounded. This contradicts the assumption of Case~1. 
		
		\subsubsection*{Case 2:} {\it  $\{\gamma_iv_0\}_{i\in\bbN}$ is bounded in $V$.}
		
		\bigskip
		In this case using \eqref{eq:temp7} we will deduce that $(a,b)\in\bbQ^2$, which contradicts \eqref{eq:unbndd} by \Cref{lem:inter_of_Q} .  
		
		Since $\Gamma v_0$ a discrete subset of $V$ and $\{\gamma_iv_0\}_{i\in\bbN}$ is bounded, there exists $j\in\bbN$ such that $\gamma_iv_0=\gamma_jv_0$ for infinitely many $i\in\bbN$. Therefore, replacing $v_0$ with $\gamma_jv_0$ and $\gamma_i$ with $\gamma_i\gamma_j^{-1}\in\Gamma$ for each $i$, after passing to a subsequence we may assume that   $\gamma_i v_0=v_0$ for all $i$ and \eqref{eq:temp7} holds. Let
		\[
		F = \Stab_G(v_0).
		\]
		Since $v_0\in V(\bbQ)$ and $v_0$ is not $G$-fixed, $F$ is a proper algebraic subgroup of $G$ defined over $\bbQ$. Since $G v_0$ is Zariski closed, $G/F\cong Gv_0$ is an affine variety. So by Matsushima's criterion~\cite[\S7.10]{borel_iga}, $F$ is a reductive subgroup of $G$. Thus $F$ is a proper reductive algebraic subgroup of $G$ defined over $\bbQ$.
		
		Now \eqref{eq:temp9} implies that $\{b_{t_i}\gab v_0 \}_{i\in\bbN}$ is bounded. Therefore after passing to a subsequence we may assume that
		\[
		b_{t_i}\gab v_0\to v_\infty
		\quad\mbox{for some}\ v_\infty\in V\setminus\{0\}.
		\]
Since $Gv_0$ is Zariski closed, we may write $v_\infty = gv_0$ for some $g\in G$.
Let
\[
L=\Stab_G(v_\infty).
\]
Then $L=gFg^{-1}$ and $L\neq G$.
It is clear from \eqref{eq:temp9} that $v_\infty\in V_1$ is fixed by $H$, and we also know that $v_\infty$ is fixed by $\{b_t\}_{t\in\bbR}$ by definition of $v_\infty$. Hence $L$ is a proper reductive subgroup of $G$ which contains the group generated by $H$ and $\{b_t\}_{t\in\bbR}$; the normalizer $N_G(H)$ of $H$. But $N_G(H)$ is a maximal reductive subgroup of $G$. Hence
\[
L=N_G(H).
\]
		
		\subsubsection*{Claim.} {\it We can pick $h_0\in G(\bbQ)$ such that $v_\infty=h_0v_0$.}
		
		\bigskip
		To prove the claim, consider the center of $F$, denoted $Z(F)$. Then
		\[
		gZ(F)g^{-1}=Z(L)=Z(N_G(H))=S_2=
		\left\{\begin{pmatrix} t & 0 & 0\\
						0 & t & 0\\
						0 & 0 & t^{-2}
					\end{pmatrix}\right\}.
		\]
		Therefore, since $F$ is an algebraic subgroup of $G$ defined over $\bbQ$,  $Z(F)$ is a one-dimensional $\bbR$-split torus in $G$ defined over $\bbQ$. In particular, $Z(F)(\bbQ)$ is Zariski dense in $Z(F)$, and hence a single element, say $\gamma\in Z(F)(\bbQ)$, generates a Zariski dense subgroup of $Z(F)$. Since $g\gamma g^{-1}\in S_2$, the roots of the characteristic polynomial of $\gamma$ are $t$, $t$, and $t^{-2}$ for some $t\in\bbR\setminus\{0\}$. Since $\gamma\in\SL(3,\bbQ)$, these roots permute under the Galois action. We conclude that $t$ is fixed by this action, so $t\in\bbQ$. Hence there exists $h_0\in G(\bbQ)$ such that 
		\[
		h_0\gamma h_0^{-1}=\diag(t,t,t^{-2})\in S_2.
		\]
		Therefore $h_0 Z(F)h_0^{-1}\subset S_2$. Hence $h_0Z(F)h_0=S_2$. The centralizers of $S_2$ in $G$ is $N_G(H)=L$. Therefore the centralizer of $Z(F)$ in $G$ is conjugate to $F$ and contains $F$, so it equals $F$.  Therefore $h_0Fh_0^{-1}=L$. Since $gFg^{-1}=L$, we have $h_0g^{-1}\in N_G(L)=L$, as $L=N_G(H)$ is a maximal subgroup. Hence $h_0v_0=h_0g^{-1}v_\infty=v_\infty$. This proves the claim.
		
		\bigskip
		Thus $b_{t_i}\gab v_0=b_{t_i}\gab (h_0^{-1}v_\infty)\to v_\infty$. Since $\Stab_{G}(v_\infty)=N_G(H)$ and the orbit $Gv_\infty$ is locally compact, the map $g[N_G(H)]\mapsto gv_\infty$ from $G/N_G(H)\to V$ is a homeomorphism onto its image. Therefore 
		\[
		b_{t_i}\gab h_0^{-1}[N_G(H)]\to [N_G(H)]
		\]
		in $G/N_G(H)$ as $i\to\infty$.
		Consider the standard projective action of $G$ on $\bbP(\bbR^3)$. Then $N_G(H)$ fixes $\langle e_3\rangle$. So
		\[
		b_{t_i}\gab h_0^{-1}\langle e_3\rangle \to \langle e_3 \rangle
		\]
		as $i\to\infty$. Since $b_{t}=\diag(e^{t/2},e^{t/2},e^{-t})$, we conclude that $\gab h_0^{-1}\langle e_3\rangle=\langle e_3\rangle$. So $h_0^{-1}e_3=\lambda \gab ^{-1}e_3$ for some $\lambda\neq 0$. Since $h_0^{-1}\in G(\bbQ)$, by \eqref{eq:u12gab} we get $\lambda (-b,-a,1)\in \bbQ^3$. So $\lambda\in\bbQ$, and hence $(a,b)\in \bbQ^2$. 
	
	   As noted earlier, $(a,b)\in \bbQ^2$ contradicts \eqref{eq:unbndd} in view of by \Cref{lem:inter_of_Q}. \qed
	
	\section{The Dani-Margulis criterion for non-escape of mass} \label{sect:tightness}

	Let $\lambda_{a,b}$ be the probability measure on $X$ given by \eqref{eq:lambda_ab}.  
	The goal of this section is to give a necessary and sufficient condition for non-escape of mass for $\{g_{t_i}\lambda_{a,b}\}$ as $t_i\to\infty$.
	
Define the unipotent one-parameter subgroup
\begin{equation*}
\label{eq:Unip-W}
W=\{w(r):={ \begin{pmatrix} 
	    1 & r & ar \\ & 1 \\ && 1
	    \end{pmatrix}}: r\in\bbR\}\subset G.
\end{equation*}

For any $t\in\bbR$ and $s\in I=[s_0,s_1]$, we have
\begin{equation} \label{eq:unipotent}
	    g_t\phi_{a,b}(s)=w(r)h_t\text{, where } r=e^{3t}(s-s_0),\,  
	    h_t=g_t\phi_{a,b}(s_0).
	\end{equation}
Hence the trajectory $\{g_t\phi_{a,b}(s)x_0:s\in I\}$ equals $\{w(r)h_tx_0:r\in [0,e^{3t}\abs{I}]\}$, which is a segment of a unipotent orbit.

By a criterion due to Dani and Margulis for analyzing non-escape of mass for unipotent trajectories on the space of unimodular lattices, we obtain the following: 
	
	\begin{prop} \label{thm:KMnondivergence}
		For any $\eps > 0$ and $R > 0$, there exist a compact set $K \subset X$ and 
		$t_I\geq 0$ such that for any $t\geq t_I$, one of the following two possibilities holds:
		\begin{enumerate}
			\item[\rm (1)] 
			$\lvert \{ s\in I \colon g_t\phi_{a,b}(s)x_0 \in K \} \rvert \geq (1-\eps)\lvert I \rvert$.
			\item[\rm (2)] There exists $w \in \bbZ^3\setminus\{0\}$  such that
			\begin{equation*}
			\sup_{s\in I}\lVert g_t\phi_{a,b}(s) w\rVert < R.
			\end{equation*}	
		\end{enumerate}
	\end{prop}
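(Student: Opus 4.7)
The plan is to exploit the key observation from \eqref{eq:unipotent} that the curve $\{g_t\phi_{a,b}(s)x_0 : s \in I\}$ is, up to a linear reparametrization, a segment of a genuine unipotent $W$-orbit of length $e^{3t}\abs{I}$ based at $h_tx_0 = g_t\phi_{a,b}(s_0)x_0$. Once this identification is in hand, we reduce the proposition to a direct application of the Dani--Margulis/Kleinbock--Margulis non-divergence criterion for unipotent trajectories on $X = \SL_3(\bbR)/\SL_3(\bbZ)$.

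More precisely, I would proceed as follows. First, fix $\eps>0$ and $R>0$. The Kleinbock--Margulis non-divergence theorem (applied to the one-parameter unipotent group $W$) provides a compact set $K\subset X$ such that for every $\tau>0$ and every point $y\in X$, either
\begin{equation*}
\bigl\lvert \{r\in[0,\tau]: w(r)y \in K\}\bigr\rvert \geq (1-\eps)\tau,
\end{equation*}
or else there exists $k\in\{1,2\}$ and a nonzero $v\in \bigwedge^k\bbZ^3$ such that $\sup_{r\in[0,\tau]}\norm{w(r)y\cdot v} < R$. Apply this dichotomy to $y = h_tx_0$ and $\tau = e^{3t}\abs{I}$, and use the change of variables $r = e^{3t}(s-s_0)$ (whose Jacobian is constant) to translate both alternatives back to statements about the parameter $s\in I$. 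In the first case we recover alternative (1) of the proposition; in the second case we obtain a nonzero $v\in\bigwedge^k\bbZ^3$ with $\sup_{s\in I}\norm{g_t\phi_{a,b}(s)v}<R$.

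It remains to eliminate the possibility $k=2$ for sufficiently large $t$. This is exactly where \Cref{lem:exterior_square} does the work: for any nonzero $v\in\bigwedge^2\bbZ^3$ and $t\geq 0$ one has $\sup_{s\in I}\norm{g_t\phi_{a,b}(s)v}\geq C_Ie^t$. Hence as soon as $t\geq t_I := \log(R/C_I)$ (possibly enlarging $t_I$ to absorb the lower-bound constraint coming from the non-divergence theorem), the exterior-square alternative is vacuous and the vector $v$ produced in alternative (2) must lie in $\bbZ^3\setminus\{0\}$, which is exactly alternative (2) of the proposition.

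The substantive content here is all packaged in the Dani--Margulis criterion and in \Cref{lem:exterior_square}, so the main obstacle is more bookkeeping than insight: one must be careful to apply non-divergence in the form that produces a vector surviving in an exterior power of the standard representation (rather than in an abstract flag form), and to match the constants so that the threshold $t_I$ simultaneously ensures both the existence of the dichotomy on the interval $[0,e^{3t}\abs{I}]$ and the failure of the $k=2$ case. Once these are lined up, the conclusion is immediate from the change of variables $r=e^{3t}(s-s_0)$.
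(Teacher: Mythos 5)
Your proposal is correct and follows essentially the same route as the paper: rewrite the curve $\{g_t\phi_{a,b}(s)x_0\}$ as the unipotent segment $\{w(r)h_tx_0 : r\in[0,e^{3t}\abs{I}]\}$ via \eqref{eq:unipotent}, apply the Dani--Margulis non-divergence criterion to get the three-way dichotomy (return to a compact set, or a small vector in $\bbZ^3$, or a small vector in $\bigwedge^2\bbZ^3$), and then invoke \Cref{lem:exterior_square} to rule out the $\bigwedge^2$ alternative once $t\geq t_I=\log(C_I^{-1}R)$. The only cosmetic difference is that you spell out the change of variables $r=e^{3t}(s-s_0)$ and mention a possible enlargement of $t_I$ to handle a lower-bound constraint on $\tau$, which is in fact unnecessary since the criterion holds for all $\tau>0$.
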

	
	\begin{proof}
	     By the result of Dani and Margulis~\cite[1.1.~Theorem]{Dani+Margulis:Invent89}, given any $\eps > 0$ and $R > 0$, we can pick a compact set $K \subset X$ such that given any finite interval $I\subset \bbR$ and any $t\geq 0$ one of the following three statements holds: the above condition (1), or the above condition (2) or  the following additional condition (3): there exists $w\in \bigwedge^2\bbZ^3\setminus\{0\}$ such that 
	     \[
	     \sup_{s\in I} \norm{g_t\phi_{a,b}(s)w}\leq R.
	     \]
	     Now if (3) holds for some $t$, then $C_Ie^t\leq R$ by Lemma~\ref{lem:exterior_square}. So the additional condition (3) will not occur for $t\geq t_I:=\log(C_I^{-1}R)$.  
	\end{proof}
	
	\begin{prop} \label{prop:consequence_of_divergence}
	Let $\{t_i\}_{i\in\bbN}$ be a sequence of real numbers such that $t_i\to\infty$. The following are equivalent: 
		\begin{enumerate}
		\item[\rm (1)] For every compact set $K\subset X$, $g_{t_i}\phi_{a,b}(I)\cap K=\emptyset$ for all large $i$.
		\item[\rm (2)] There exists $\eps>0$ such that for every compact set $K\subset X$, 
		\begin{equation} \label{eq:temp15a}
		g_{t_i}\lambda_{a,b}(K)\leq 1-\eps \text{, for all large $i$.} 
		\end{equation}
		\item[\rm (3)] There exist vectors $\{v_i\}\subset \bbZ^3\setminus\{0\}$ such that
		\begin{equation}\label{eq:temp15}
		\sup_{s\in I}\norm{g_{t_{i}}\phi_{a,b}(s)v_{i}}\to 0\text{ as } i\to\infty.
		\end{equation}
		\end{enumerate}
	\end{prop}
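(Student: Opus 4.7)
The plan is to prove the cyclic chain of implications $(1)\Rightarrow(2)\Rightarrow(3)\Rightarrow(1)$.

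The implication $(1)\Rightarrow(2)$ is immediate: if $g_{t_i}\phi_{a,b}(I)\cap K=\emptyset$ for all large $i$, then $g_{t_i}\lambda_{a,b}(K)=0$ for such $i$, so $(2)$ holds with, for instance, $\eps=1/2$ independently of $K$.

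For $(2)\Rightarrow(3)$, I would fix the $\eps>0$ from hypothesis $(2)$ and, for each integer $j\geq 1$, apply \Cref{thm:KMnondivergence} with parameters $\eps/2$ and $R_j=1/j$. This produces a compact set $K_j\subset X$ and a threshold $t_{I,j}\geq 0$ such that for each $t\geq t_{I,j}$, either $g_t\lambda_{a,b}(K_j)\geq 1-\eps/2$, or there exists $w\in\bbZ^3\setminus\{0\}$ with $\sup_{s\in I}\norm{g_t\phi_{a,b}(s)w}<R_j$. Applying hypothesis $(2)$ to the compact set $K_j$ yields an index $i_j$ (which I may take increasing in $j$ and large enough that $t_i\geq t_{I,j}$ whenever $i\geq i_j$) such that $g_{t_i}\lambda_{a,b}(K_j)\leq 1-\eps<1-\eps/2$ for all $i\geq i_j$. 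The first alternative of \Cref{thm:KMnondivergence} thus fails, so the second must hold, furnishing some $v_i\in\bbZ^3\setminus\{0\}$ with $\sup_{s\in I}\norm{g_{t_i}\phi_{a,b}(s)v_i}<R_{j(i)}$, where $j(i)$ is chosen as the largest $j$ with $i_j\leq i$. Since $j(i)\to\infty$ by construction, the right-hand side tends to $0$, giving $(3)$.

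For $(3)\Rightarrow(1)$, I would invoke Mahler's compactness criterion: every compact $K\subset X$ is contained in a set $K_\delta$ consisting of those unimodular lattices in $\bbR^3$ whose shortest nonzero vector has norm at least $\delta$, for some $\delta>0$ depending on $K$. Given vectors $\{v_i\}$ satisfying $(3)$, one has $\sup_{s\in I}\norm{g_{t_i}\phi_{a,b}(s)v_i}<\delta$ for all large $i$. Since $g_{t_i}\phi_{a,b}(s)v_i$ is a nonzero vector of the lattice $g_{t_i}\phi_{a,b}(s)\bbZ^3$, that lattice cannot lie in $K_\delta$, and hence not in $K$, for any $s\in I$.

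The only substantive step is the diagonal extraction in $(2)\Rightarrow(3)$, but this is routine once the Dani–Margulis non-divergence statement \Cref{thm:KMnondivergence} is available; no genuine obstacle is expected since \Cref{lem:exterior_square} already did the work of ruling out the exterior-square obstruction in its proof.
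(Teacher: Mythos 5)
Your proposal is correct and follows essentially the same route as the paper: a trivial $(1)\Rightarrow(2)$, a diagonal argument through \Cref{thm:KMnondivergence} for $(2)\Rightarrow(3)$, and Mahler's criterion for $(3)\Rightarrow(1)$. The only difference is cosmetic: by feeding $\eps/2$ into \Cref{thm:KMnondivergence} you get strict separation $1-\eps<1-\eps/2$, which avoids the borderline-equality case that the paper's proof glosses over when it uses the same $\eps$ in both places.
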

	
	\begin{proof}   (1) $\Rightarrow$ (2) is obvious.
	
		  (2) $\Rightarrow$ (3): Fix $\eps>0$ so that \eqref{eq:temp15a} holds for every compact $K\subset 
		X$. For each $j\in\bbN$ and $R=1/j$, 
		obtain a compact set $K_j\subset X$ as in  \Cref{thm:KMnondivergence}. Then for all $i$,
		\[
		g_{t_i}\lambda_{a,b}(K_j)
		=\frac{1}{\abs{I}}\abs{\{s\in I:g_{t_i}\phi_{a,b}(s)x_0\in K_j\}}.
		\]
		So, by \eqref{eq:temp15a} there exists $i_j\in\bbN$ such that the possibility~(1) of  \Cref{thm:KMnondivergence} does not hold for all $i\geq i_j$; and hence its second assertion~(2) must hold. So for all $i\geq i_j$, there exists $w_{j,i}\in\bbZ^3\setminus\{0\}$ such that  
		\begin{equation} \label{eq:temp15b}
			\sup_{s\in I}\lVert g_{t_i}\phi_{a,b}(s)w_{j,i}\rVert \leq  1/j.
			\end{equation}
		For each $i\geq i_1$, let $j$ be maximal such that $i\geq i_j$, and put $v_i=w_{j,i}$. Then \eqref{eq:temp15} follows from \eqref{eq:temp15b}.  
		
	    (3) $\Rightarrow$ (1) is a straightforward consequence of Mahler's compactness criterion.
	\end{proof}
	
	\begin{proof}[Proof of \Cref{thm:main_thm_non_escape_of_mass}]
		To say that the sequence of $g_t$-translates of $\lambda_{a,b}$ has no escape of mass means that there exists a sequence $t_i\to\infty$ such that condition~(2), and hence equivalently condition~(3), of \Cref{prop:consequence_of_divergence} fails to hold. It remains to apply \Cref{lem:interpretation_of_Wtwooo}.
	\end{proof}

	\section{Ratner's theorem and a linear dynamical criterion \texorpdfstring{\\}{} for avoidance of singular sets}\label{sect:linearization}
The collection of probability measures on the one-point compactification, say $\bar X=G/\Gamma\cup\{\infty\}$, of $X = G/\Gamma$ is compact with respect to the weak-* topology on $\bar X$. So given any sequence $t_i\to\infty$, after passing to a subsequence, we obtain that $g_{t_i}\lambda_{a,b}$ converges to a probability measure $\bar\mu$ on $\bar X$. Let $\mu$ denote the restriction of $\bar\mu$ to $X$. Then $g_{t_i}\lambda_{a,b}$ converges to 
$\mu$ 
with respect to the weak-* topology; that is, for all $f\in C_c(X)$, we have
\[
\lim_{i\to\infty} \int_{X} f\,\mathrm{d}(g_{t_i}\lambda_{a,b})=\int_{X} f\, \mathrm{d}\mu.
\]
For the proposition below, recall that
\[
W = \left\{w(r)={ \begin{pmatrix} 
	    1 & r & ar \\ & 1 \\ && 1
	    \end{pmatrix}}:\ r\in\R\right\}.
\]

	\begin{prop}
	    \label{prop:U-inv}
	    Suppose that $\mu$ is a weak-* limit of $g_{t_i}\lambda_{a,b}$ for a sequence $t_i\to\infty$. Then $\mu$ is invariant under the action of $W$. 
	\end{prop}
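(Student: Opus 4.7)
The key input is the identity \eqref{eq:unipotent}, namely
\[
g_t\phi_{a,b}(s) \;=\; w(r)\,h_t, \qquad r = e^{3t}(s-s_0),\quad h_t = g_t\phi_{a,b}(s_0),
\]
which exhibits the curve $\{g_t\phi_{a,b}(s) : s\in I\}$ as a piece of the $W$-orbit of $h_t$, of length $e^{3t}\abs{I}$ in the parameter $r$. The plan is to use this to reparametrize $g_{t_i}\lambda_{a,b}$ as normalized Lebesgue along a very long $W$-orbit segment, and then to exploit the fact that shifting by a fixed $w(r_0)$ only changes a relatively small proportion of that segment.

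Concretely, changing variables from $s\in I$ to $r = e^{3t}(s-s_0) \in [0, e^{3t}\abs{I}]$ in \eqref{eq:gtlambda_ab} gives, for every $f\in C_c(X)$,
\[
\int_X f\,\mathrm{d}(g_t\lambda_{a,b}) \;=\; \frac{1}{e^{3t}\abs{I}}\int_{0}^{e^{3t}\abs{I}} f\bigl(w(r)h_tx_0\bigr)\,\mathrm{d}r.
\]
Fix $r_0\in\bbR$; since $\{w(r)\}_{r\in\bbR}$ is a one-parameter subgroup, a further change of variable $r\mapsto r-r_0$ yields
\[
\int_X f\,\mathrm{d}\bigl(w(r_0)_{\ast}(g_t\lambda_{a,b})\bigr) \;=\; \frac{1}{e^{3t}\abs{I}}\int_{r_0}^{e^{3t}\abs{I}+r_0} f\bigl(w(r)h_tx_0\bigr)\,\mathrm{d}r.
\]
Subtracting, the bulk of the two integrals cancels and only two intervals of length $\abs{r_0}$ remain, so that
\[
\abs[\Big]{\int_X f\,\mathrm{d}\bigl(w(r_0)_\ast(g_t\lambda_{a,b})\bigr) - \int_X f\,\mathrm{d}(g_t\lambda_{a,b})} \;\leq\; \frac{2\abs{r_0}\,\|f\|_\infty}{e^{3t}\abs{I}}.
\]
This bound tends to $0$ along $t=t_i\to\infty$, so the two sequences of measures have the same weak-$\ast$ limit. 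Hence $\int f\,\mathrm{d}(w(r_0)_\ast\mu) = \int f\,\mathrm{d}\mu$ for every $f\in C_c(X)$ and every $r_0\in\bbR$, which is exactly $W$-invariance of $\mu$.

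No serious obstacle is expected: the whole argument rests on the elementary identity \eqref{eq:unipotent}, which turns the $g_t$-translated line segment into a $W$-orbit segment whose length grows like $e^{3t}$; the classical ``long orbit'' trick then shows that any finite shift is negligible in the limit. One minor point to keep straight is that the same computation works verbatim whether $\mu$ is a probability measure or has lost mass, since the estimate is in terms of $\|f\|_\infty$ for $f\in C_c(X)$ and does not require normalization of the limit.
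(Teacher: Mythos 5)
Your proof is correct and takes essentially the same approach as the paper: both use the identity \eqref{eq:unipotent} to rewrite $g_t\lambda_{a,b}$ as a normalized integral over a $W$-orbit segment of length $e^{3t}\abs{I}$, then observe that a fixed shift by $w(r_0)$ only changes two boundary intervals of length $\abs{r_0}$, giving an error $O\bigl(e^{-3t}\norm{f}_\infty\bigr)$ that vanishes as $t=t_i\to\infty$.
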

	
	\begin{proof}
		By \eqref{eq:unipotent}, for any $t>0$ and any $f\in C_c(X)$,
	\begin{equation} \label{eq:deflambda}
	\int_{X} f\,\mathrm{d}(g_t\lambda_{a,b}) =\fint_I f(g_t\phi_{a,b}(s)x_0)\,\mathrm{d}s=\frac{1}{e^{3t}}\int_0^{e^{3t}} f(w(r)h_tx_0)\,\mathrm{d}r,
	\end{equation}
	where $h_t=g_t\phi_{a,b}(s_0)$. So for any $r_0\in\R$,
	\begin{equation} \label{eq:wr-inv}
	\begin{array}{l}
	    \int_{X} f(w(r_0)x)\,\mathrm{d}(g_t\lambda_{a,b})(x)=\frac{1}{e^{3t}}\int_0^{e^{3t}} f(w(r_0)w(r)h_tx_0)\,\mathrm{d}r\\
	    =\frac{1}{e^{3t}}\int_{r_0}^{r_0+e^{3t}} f(w(r)h_tx_0)\,\mathrm{d}r
	    =\frac{1}{e^{3t}}\int_0^{e^{3t}} f(w(r)h_tx_0)\,\mathrm{d}r +\eps_t\\
	    =\int_{X} f\,\mathrm{d}(g_t\lambda_{a,b})+\eps_t,
	    \end{array}
	\end{equation}
	where $\abs{\eps_t}\leq 2r_0e^{-3t}\norm{f}_\infty$. Then observe that the left-most (resp., right-most) term of \eqref{eq:wr-inv} converges to $\int_{X} f(w(r_0)x)\,\mathrm{d}\mu(x)$ (resp.,  to $\int_{X} f\,\mathrm{d}\mu$) as $t=t_i\to\infty$.
	\end{proof}
	
	\begin{prop} \label{prop:Ave-W-inv}
	    Suppose that $\mu$ is a weak-* limit of $(1/T_i)\int_{0}^{T_i} g_t\lambda_{a,b}\,\mathrm{d}t$ for a sequence $T_i\to\infty$. Then $\mu$ is invariant under the action of $W$.
	\end{prop}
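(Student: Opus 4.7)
The plan is to reduce this statement to the computation already carried out in the proof of \Cref{prop:U-inv} by interchanging the $w(r_0)$-translation with the time average. The key observation is that the error term $\eps_t$ in \eqref{eq:wr-inv} decays exponentially in $t$, so after averaging over $t\in[0,T]$ and dividing by $T$, the accumulated error still tends to zero as $T\to\infty$.

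More concretely, fix $f\in C_c(X)$ and $r_0\in\bbR$. For any $T>0$, write
\[
\int_X f(w(r_0)x)\,\mathrm{d}\!\left(\tfrac{1}{T}\int_0^T g_t\lambda_{a,b}\,\mathrm{d}t\right)\!(x)
= \frac{1}{T}\int_0^T\!\int_X f(w(r_0)x)\,\mathrm{d}(g_t\lambda_{a,b})(x)\,\mathrm{d}t
\]
by Fubini. Applying the identity \eqref{eq:wr-inv} from the proof of \Cref{prop:U-inv} inside the integrand, the right hand side equals
\[
\frac{1}{T}\int_0^T \left(\int_X f\,\mathrm{d}(g_t\lambda_{a,b}) + \eps_t\right)\mathrm{d}t
= \int_X f\,\mathrm{d}\!\left(\tfrac{1}{T}\int_0^T g_t\lambda_{a,b}\,\mathrm{d}t\right) + \frac{1}{T}\int_0^T\eps_t\,\mathrm{d}t,
\]
again by Fubini.

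Since $\abs{\eps_t}\leq 2r_0 e^{-3t}\norm{f}_\infty$, the accumulated error is bounded by
\[
\left|\frac{1}{T}\int_0^T\eps_t\,\mathrm{d}t\right|\leq \frac{2r_0\norm{f}_\infty}{3T}(1-e^{-3T}) \xrightarrow{T\to\infty} 0.
\]
Taking $T = T_i\to\infty$ and passing to the weak-$\ast$ limit on both sides yields $\int f(w(r_0)x)\,\mathrm{d}\mu(x) = \int f\,\mathrm{d}\mu$. Since this holds for all $f\in C_c(X)$ and all $r_0\in\bbR$, we conclude that $\mu$ is $W$-invariant. There is no real obstacle here; the only point requiring a little care is confirming that Fubini applies (which it does, as $f$ is continuous, compactly supported, and the measures involved are finite), and that the error bound from \eqref{eq:wr-inv} is genuinely uniform in the sense needed.
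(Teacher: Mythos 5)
Your proposal is correct and follows the same route as the paper, which simply says to average the identity \eqref{eq:wr-inv} over $t\in[0,T_i]$ and pass to the limit; you have merely spelled out the Fubini step and the bound on the accumulated error $\frac{1}{T}\int_0^T \eps_t\,\mathrm{d}t$ explicitly. (One cosmetic point: the bound should read $2\abs{r_0}e^{-3t}\norm{f}_\infty$ rather than $2r_0 e^{-3t}\norm{f}_\infty$, a slip already present in the paper's \eqref{eq:wr-inv}.)
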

	
	\begin{proof} We perform the average over $t\in[0,T_i]$ in \eqref{eq:deflambda} and \eqref{eq:wr-inv} and take the limit as $i\to\infty$ to conclude that $\mu$ is $w(r_0)$-invariant. 
	\end{proof}

	With \Cref{prop:U-inv}, we will be able to apply Ratner's description of ergodic invariant measures for actions of unipotent one-parameter subgroups on $X$ to analyze the limiting distributions of $\{g_t\lambda_{a,b}\}$ as $t\to\infty$. For this purpose we will apply what is now called `the linearization technique'~\cite{DM93}. 
	
	Let $\pi \colon G \rightarrow X$ denote the natural quotient map. Let $\mathcal{H}$ denote the collection of closed connected subgroups $H$ of $G$ such that $H\cap\Gamma$ is a lattice in $H$, and such that a unipotent one-parameter subgroup contained in $H$ acts ergodically with respect to the $H$-invariant probability measure on $H/H\cap\Gamma$. Then any $H\in\cH$ is a real algebraic group defined over $\bbQ$ \cite[(3.2)~Proposition]{Shah:MathAnn91}. In particular, $\mathcal{H}$ is a countable collection~\cite{Ra91}.
	
	Let $W$ be a one-parameter unipotent subgroup of $G$. For a closed connected subgroup $H$ of $G$, define
	\begin{equation*}
	N(H,W) = \{g\in G\colon g^{-1}Wg \subset H \}.
	\end{equation*}
	Now, suppose that $H\in\mathcal{H}$. We define the associated singular set
	\begin{equation*}
	S(H,W) = \bigcup_{{F\in\mathcal{H}}, \, {F\subsetneq H}}N(F,W).
	\end{equation*}
	Note that $N_G(W)N(H,W)=N(H,W)= N(H,W)N_G(H)$. By \cite[Proposition 2.1, Lemma 2.4]{MS95},
	\begin{equation*}
	N(H,W)\cap N(H,W)\gamma \subset S(H,W), \; \forall \gamma\in\Gamma\backslash N_G(H).
	\end{equation*}
	By Ratner's theorem \cite[Theorem 1]{Ra91}, as explained in \cite[Theorem 2.2]{MS95}, we have the following.
	\begin{thm}[Ratner] \label{thm:Ratner}
		Given a $W$-invariant probability measure $\lambda$ on $X$, there exists $H\in\mathcal{H}$ such that
		\begin{equation*}\label{eq:positive_limit_measure_on_singular_set}
		\lambda(\pi(N(H,W)))>0 \quad \text{and} \quad \lambda(\pi(S(H,W)))=0.
		\end{equation*}
		Moreover, almost every $W$-ergodic component of $\lambda$ restricted to $\pi(N(H,W))$ is a measure of the form $g\mu_H$, where $g\in N(H,W)\backslash S(H,W)$ and $\mu_H$ is a finite $H$-invariant measure on $\pi(H)\cong H/H\cap\Gamma$.
		
		Further, if $H$ as above is a normal subgroup of $G$, then $\lambda$ is $H$-invariant. 
		\end{thm}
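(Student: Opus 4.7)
The plan is to deduce the statement from Ratner's measure classification theorem together with the linearization setup of Dani--Margulis and Mozes--Shah; the argument is essentially the one described in \cite[\S2]{MS95}, specialized to the unipotent one-parameter subgroup $W$. First I would invoke Ratner's theorem \cite{Rat91} to decompose $\lambda$ into $W$-ergodic components: almost every ergodic component $\lambda_x$ is the normalized Haar measure on a closed orbit $g_x \pi(H_x)$ for some $H_x \in \mathcal{H}$ (with $W \subset g_x H_x g_x^{-1}$, equivalently $g_x \in N(H_x, W)$). Thus $\lambda$-almost every point of $X$ lies in $\bigcup_{H \in \mathcal{H}} \pi(N(H,W))$, and since $\mathcal{H}$ is countable, there must exist $H \in \mathcal{H}$ with $\lambda(\pi(N(H,W))) > 0$.

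Among all such $H$, I would choose one with $\dim H$ minimal, and among those with minimal dimension, one with the fewest connected components of $H \cap \Gamma \backslash H$ (any well-ordering of $\mathcal{H}$ compatible with proper inclusion suffices). I claim this $H$ satisfies $\lambda(\pi(S(H,W))) = 0$. Indeed, suppose for contradiction that $\lambda(\pi(N(F,W))) > 0$ for some $F \in \mathcal{H}$ with $F \subsetneq H$. Then $F$ itself would be an admissible candidate of strictly smaller dimension, contradicting minimality of $H$. Covering $S(H,W) = \bigcup_{F \subsetneq H,\,F \in \mathcal{H}} N(F,W)$ by countably many such $F$, we conclude $\lambda(\pi(S(H,W))) = 0$.

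For the description of $W$-ergodic components on $\pi(N(H,W))$, I would again apply Ratner's classification: any ergodic component supported on $\pi(N(H,W)) \setminus \pi(S(H,W))$ must be the Haar measure on a closed orbit $\pi(gH')$ for some $H' \in \mathcal{H}$ with $g \in N(H',W)$; but if $H' \subsetneq H$ we land in $\pi(S(H,W))$ (null set), and if $H' \supsetneq H$ the orbit $\pi(gH')$ would engulf a positive-measure portion of $\pi(N(H,W))$, again contradicting minimality after replacing $H$ by $H'$. Hence $H' = H$, and each such ergodic component has the form $g\mu_H$ with $g \in N(H,W) \setminus S(H,W)$. The key observation making this rigorous is the identity $N(H,W) \cap N(H,W)\gamma \subset S(H,W)$ for $\gamma \notin N_G(H)$, quoted in the excerpt from \cite[Proposition~2.1, Lemma~2.4]{MS95}, which guarantees that outside $\pi(S(H,W))$ the representative $g$ is determined up to right multiplication by $N_G(H)$.

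Finally, if $H$ is normal in $G$, then for any $g \in G$ the measure $g\mu_H$ is again $H$-invariant (conjugation by $g$ preserves $H$, so the pushforward of the $H$-invariant measure under left translation by $g$ remains $H$-invariant). Since $\lambda$ is a convex combination of such $W$-ergodic components (plus a $\lambda$-null piece supported on $\pi(S(H,W))$), it is itself $H$-invariant. The main conceptual obstacle is not any single step but organizing the bookkeeping between ergodic decomposition and the countable stratification by $\mathcal{H}$; with the identities from \cite{MS95} in hand this organization is routine.
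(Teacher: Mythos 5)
The paper does not prove this statement --- it is quoted as Ratner's theorem, citing \cite[Theorem~1]{Rat91} and \cite[Theorem~2.2]{MS95} --- so there is no internal proof to compare against; your plan is a reconstruction of the cited Mozes--Shah argument. The first half is correct as written: decompose $\lambda$ into $W$-ergodic components via Ratner, then choose $H\in\cH$ of minimal dimension among those with $\lambda(\pi(N(H,W)))>0$; any $F\subsetneq H$ in $\cH$ is connected and so has strictly smaller dimension, whence $\lambda(\pi(N(F,W)))=0$, and countability of $\cH$ gives $\lambda(\pi(S(H,W)))=0$. (The proposed tie-break by ``fewest connected components of $H\cap\Gamma\backslash H$'' is vacuous, since every $H\in\cH$ is connected, but it is also unnecessary.)

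The case analysis in the ergodic-component step has a real gap. You correctly rule out $H'\subsetneq H$ by noting the orbit lies in the null set $\pi(S(H,W))$. But the assertion that $H'\supsetneq H$ ``contradicts minimality after replacing $H$ by $H'$'' is not an inference --- a larger group with positive measure is perfectly compatible with $H$ being dimension-minimal --- and you never address the possibility that $H'$ is incomparable with $H$. The reason neither case actually occurs is different: if $x=g\Gamma$ with $g\in N(H,W)$, then $g^{-1}Wg\subset H$, so $Wx\subset gH\Gamma/\Gamma$; since $H\in\cH$ this orbit is closed, hence $\overline{Wx}\subset gH\Gamma/\Gamma$, and Ratner's classification then produces a subgroup already contained in a conjugate of $H$. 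With that observation in hand the dichotomy reduces to $H'\subsetneq H$ versus $H'=H$, which you do handle. This is the ingredient that makes the Mozes--Shah bookkeeping go through; the identity $N(H,W)\cap N(H,W)\gamma\subset S(H,W)$ you quote controls only the ambiguity in the representative $g$ modulo $N_G(H)$, not the identification of the group $H'$.
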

		
		To justify the last sentence in \Cref{thm:Ratner}, note that $\lambda(\pi(N(H,W)))>0$, so $N(H,W)\neq\emptyset$. Since $N_G(H)=G$, we have $N(H,W)=N(H,W)N_G(H)=G$, and hence $\lambda$ restricted to $\pi(N(H,W))$ equals $\lambda$. And for every $g\in G$, $g\mu_H$ is $H$-invariant. So almost every $W$-ergodic component of $\lambda$ is $H$-invariant, so $\lambda$ is $H$-invariant. 
	 
	Now let $H\in \cH$ and put $d = \dim H$. Let $\mathfrak{g}$ denote the Lie algebra of $G$ and take $V=\bigwedge^d\mathfrak{g}$.
	Then $V$ admits a $\bbQ$-structure corresponding to the standard $\bbQ$-structure on $\mathfrak{g}$.
	Also $G$ acts on $V$ via the adjoint action of $G$ on $\mathfrak{g}$. 
	Since $H$ is defined over $\bbQ$, its Lie algebra $\mathfrak{h}$ is a $\bbQ$-subspace of $\mathfrak{g}$.
	Fix $p_H \in \bigwedge^d\mathfrak{h}(\bbQ)\backslash \{0\}$. 
	Then the orbit $\Gamma p_H$ is a discrete subset of $V$. 
	We note that for any $g\in N_G(H)$, $gp_H = \det(\mathrm{Ad}\,g\vert_\mathfrak{h})p_H$. 
	Hence the stabilizer of $p_H$ in $G$ equals
	\begin{equation*}
	N_G^1(H) := \{ g\in N_G(H)\colon \det(\mathrm{Ad}\,g\vert_\mathfrak{h})=1\}.
	\end{equation*}
	Fix $w_0\in\mathfrak{g}$ such that $\mathrm{Lie}(W)=\bbR w_0$, 
	and for $V$ as above define
	\begin{equation*}
	\mathcal{A} = \{ v\in V \colon v\wedge w_0 = 0 \}.
	\end{equation*}
	Then $\mathcal{A}$ is a linear subspace of $V$ and we observe that
	\begin{equation*}
	N(H,W) = \{ g\in G\colon g\cdot p_H \in \mathcal{A} \}.
	\end{equation*}
	
	By the linearization technique \cite[Proposition 4.2]{DM93} we obtain the following:
	
	\begin{prop}\label{prop:nonfocusing_criterion}
		Let $\cC$ be a compact subset of $N(H,W)\setminus S(H,W)$. Given $\eps > 0$, there exists a compact set $\mathcal{D}\subset\mathcal{A}$ such that, given a neighborhood $\Phi$ of $\mathcal{D}$ in $V$, there exists a neighborhood $\mathcal{O}$ of $\pi(\cC)$ in $X$ such that for any $t\in\bbR$ and any subinterval $J\subset I$, one of the following statements holds:
		\begin{enumerate}
			\item[\rm (1)] $\lvert \{ s\in J \colon g_t\phi_{a,b}(s)x_0 \in \mathcal{O} \} \rvert \leq \eps \lvert J \rvert$.
			\item[\rm (2)] There exists $\gamma\in\Gamma$ such that $g_t\phi_{a,b}(s)\gamma p_H \in \Phi$ for all $s\in J$.
		\end{enumerate}
	\end{prop}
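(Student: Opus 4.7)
The plan is to reduce directly to Dani--Margulis' linearization proposition \cite[Proposition 4.2]{DM93}. The bridge is the identity \eqref{eq:unipotent}:
\[
g_t\phi_{a,b}(s)x_0 = w\bigl(e^{3t}(s-s_0)\bigr)\, h_t x_0, \qquad h_t := g_t\phi_{a,b}(s_0),
\]
valid for any subinterval $J\subset I$ with $s_0:=\inf J$. Under the affine change of variable $r=e^{3t}(s-s_0)$, the arc $\{g_t\phi_{a,b}(s)x_0:s\in J\}$ is exactly the $W$-trajectory segment $\{w(r)h_tx_0:r\in[0,e^{3t}|J|]\}$, and analogously $g_t\phi_{a,b}(s)\gamma p_H = w(r)h_t\gamma p_H$ for any $\gamma\in\Gamma$.

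First, I would verify the polynomial setup underlying the linearization machinery. For any $v\in V$, the map $r\mapsto w(r)v$ is a polynomial of degree at most $\dim V$ in $r$, since $W$ is a one-parameter unipotent subgroup acting on a finite-dimensional representation. Consequently, the coordinates of $r\mapsto w(r)h_t\gamma p_H$ in a fixed basis of $V$ are polynomials in $r$ of uniformly bounded degree, hence $(C_0,\alpha_0)$-good on every interval, with constants depending only on $\dim V$.

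Then I would invoke \cite[Proposition 4.2]{DM93} with the unipotent one-parameter subgroup $W$ and the compact set $\cC\subset N(H,W)\setminus S(H,W)$. That proposition delivers a compact $\mathcal{D}\subset\mathcal{A}$ such that, for any neighborhood $\Phi\supset\mathcal{D}$, there is a neighborhood $\mathcal{O}$ of $\pi(\cC)$ for which the following dichotomy holds on every interval $[0,L]$ and every basepoint $h_tx_0$: either $|\{r\in[0,L]:w(r)h_tx_0\in\mathcal{O}\}|\leq\eps L$, or there is some $\gamma\in\Gamma$ with $w(r)h_t\gamma p_H\in\Phi$ for all $r\in[0,L]$.

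Finally, I would transport this dichotomy back through the affine reparametrization $r=e^{3t}(s-s_0)$; being affine, it preserves the ratio of the measure of a measurable subset to the ambient length, so the two alternatives translate verbatim into (1) and (2). The conceptually substantive content is packaged inside the citation: producing a \emph{single} $\gamma$ that works for every point of the subinterval combines the $(C_0,\alpha_0)$-good property of polynomials (amplifying ``many $s$'s'' to ``all $s\in J$'', at the cost of enlarging $\mathcal{D}$ to $\Phi$) with the separation property that $N(H,W)\cap N(H,W)\gamma\subset S(H,W)$ for $\gamma\notin N_G(H)\cap\Gamma$; together with the assumption $\cC\cap S(H,W)=\emptyset$, this forces local uniqueness of the candidate $\gamma$ once $\mathcal{O}$ is sufficiently small. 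I would expect this uniqueness-in-the-presence-of-lattice-density step, rather than the polynomial estimate or the reparametrization, to be the principal obstacle were one to reprove the result from scratch rather than cite it.
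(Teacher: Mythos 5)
Your proposal is correct and takes exactly the route the paper takes: the paper sets up the reparametrization $g_t\phi_{a,b}(s)=w(r)h_t$ in \eqref{eq:unipotent} and then simply says ``by the linearization technique \cite[Proposition 4.2]{DM93} we obtain the following,'' so the affine change of variables $r=e^{3t}(s-s_0)$ taking an arbitrary $J\subset I$ (with $s_0=\inf J$) to $[0,e^{3t}|J|]$, followed by a direct citation of Dani--Margulis, is precisely the intended proof; your extra paragraph on $(C_0,\alpha_0)$-goodness is material already internal to the cited proposition and not something the reader needs to re-verify, but it does no harm.
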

	
	Let $\lambda_{a,b}$ be as in \eqref{eq:lambda_ab}.
	
	\begin{prop}\label{prop:equidistribution}
		Let $\mu$ be a weak-* limit of $g_{t_i}\lambda_{a,b}$ for a sequence $t_i\to\infty$. Suppose $\mu$ is not the $G$-invariant probability measure $\mu_{X}$. Then there exists $R>0$ and a sequence $\{v_i\}\subset\bbZ^3\setminus\{0\}$ such that
		\begin{equation} \label{eq:bounded}
		\sup_{s\in I}\norm{g_{t_i}\phi_{a,b}(s)v_i}\leq R.
		\end{equation}
		In particular $(a,b)\in\Wtwo$.
	\end{prop}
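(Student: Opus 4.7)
The plan is to split the analysis according to whether the weak-$*$ limit $\mu$ is a probability measure or has lost mass, and in both subcases produce the bound \eqref{eq:bounded}; a single application of \Cref{lem:interpretation_of_Wtwo} will then immediately yield $(a,b)\in\Wtwo$.

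First I would handle the escape of mass case. If $\mu(X)<1$, set $\eps=(1-\mu(X))/2>0$. For every compact $K\subset X$, weak-$*$ convergence gives the portmanteau inequality $\limsup_i g_{t_i}\lambda_{a,b}(K)\leq \mu(K)\leq 1-2\eps$, so condition~(2) of \Cref{prop:consequence_of_divergence} holds along $\{t_i\}$. Its equivalence with condition~(3) immediately supplies $v_i\in\bbZ^3\setminus\{0\}$ with $\sup_{s\in I}\norm{g_{t_i}\phi_{a,b}(s)v_i}\to 0$, which satisfies \eqref{eq:bounded} for any fixed $R>0$.

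Next I would treat the case when $\mu$ is a probability measure distinct from $\mu_X$. By \Cref{prop:U-inv}, $\mu$ is $W$-invariant, so Ratner's \Cref{thm:Ratner} produces some $H\in\cH$ with $\mu(\pi(N(H,W)))>0$ and $\mu(\pi(S(H,W)))=0$; the hypothesis $\mu\neq\mu_X$ combined with the Ratner description of the ergodic components forces $H$ to be a proper closed connected subgroup of $G$ (if $H=G$ were the only option, every ergodic component would equal $\mu_X$, forcing $\mu=\mu_X$). I would then pick a compact $\cC\subset N(H,W)\setminus S(H,W)$ with $\mu(\pi(\cC))>0$, set $\eta=\mu(\pi(\cC))/2$, and feed this into the linearization \Cref{prop:nonfocusing_criterion}. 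With $J=I$ and $t=t_i$ large, weak-$*$ convergence $g_{t_i}\lambda_{a,b}\to\mu$ yields $g_{t_i}\lambda_{a,b}(\cO)>\eta$, ruling out alternative~(1). Alternative~(2) then produces $\gamma_i\in\Gamma$ such that $g_{t_i}\phi_{a,b}(s)\gamma_i p_H\in\Phi$ for every $s\in I$; in particular $\sup_{s\in I}\norm{g_{t_i}\phi_{a,b}(s)\gamma_i p_H}$ is bounded by a constant depending only on $\Phi$.

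Finally I would apply \Cref{prop:consequence_of_linear_focusing} with $v_0=p_H$. Its hypothesis that $v_0$ is not $G$-fixed holds because $G=\SL_3$ has no proper connected normal subgroup, so the proper $H$ is not normal and its adjoint stabilizer $N_G^1(H)$ is a proper subgroup of $G$. The proposition then produces $R>0$ and $v_i\in\bbZ^3\setminus\{0\}$ giving \eqref{eq:bounded} for all sufficiently large $t_i$ (extending $\{v_i\}$ arbitrarily on the finitely many remaining indices completes the sequence), and \Cref{lem:interpretation_of_Wtwo} delivers $(a,b)\in\Wtwo$. The main obstacle I anticipate is the linearization step: one must genuinely exhibit $\cC$ carrying positive $\mu$-mass while avoiding the singular set $S(H,W)$ (which rests on the countability of $\cH$ and property \eqref{eq:positive_limit_measure_on_singular_set}) and ensure that \Cref{prop:nonfocusing_criterion} applies with $J=I$ rather than only on subintervals. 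Once the quantitative bound in $V=\bigwedge^{\dim H}\fg$ is in hand, the heavy lifting of descending from an arbitrary $G$-representation to the standard one is already packaged inside \Cref{prop:consequence_of_linear_focusing}, after which the Diophantine conclusion is automatic.
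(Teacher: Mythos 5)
Your proposal is correct and follows essentially the same route as the paper: split on whether $\mu$ has lost mass (invoking \Cref{prop:consequence_of_divergence}) or is a non-Haar probability measure (invoking \Cref{prop:U-inv}, Ratner's \Cref{thm:Ratner}, the linearization \Cref{prop:nonfocusing_criterion} with $J=I$, and then \Cref{prop:consequence_of_linear_focusing} applied to $p_H$), and finally apply \Cref{lem:interpretation_of_Wtwo}. You also supply the small check — omitted in the paper — that $p_H$ is not $G$-fixed, using simplicity of $\SL_3$; that is a welcome addition rather than a deviation.
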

	
	\begin{proof}
	If $\mu$ is not a probability measure on $X$, then condition~(2), and hence condition~(3), of \Cref{prop:consequence_of_divergence} hold. So \eqref{eq:bounded} follows.
	
	Therefore, we now assume that $\mu$ is a probability measure on $X$. 
		By \Cref{prop:U-inv} $\mu$ is $W$-invariant. By \Cref{thm:Ratner}, there exists $H\in \cH$ such that \[ 
		\mu(\pi(N(H,W)))>0 \text{ and } \mu(\pi(S(H,W)))=0,
		\]
		and since $\mu$ is not $G$-invariant, $H\neq G$. So $\dim(H)<\dim(G)$. 
		
		We thus conclude that there exist $\eps>0$ and a compact set $\cC\subset N(H,W)\setminus S(H,W)$ such that $\mu(\pi(\cC))>\eps$.
		By \Cref{prop:nonfocusing_criterion} applied to $\eps/2$ in place of $\eps$, we obtain a compact set $\cD\subset\cA$. Then we pick $R_1>0$ such that $\cD$ is contained in the open norm-ball of radius $R_1$ in $V$, denoted $\Phi$, and obtain a neighborhood $\cO$ of $\pi(\cC)$ in $G/\Gamma$ so that the conclusion of \Cref{prop:nonfocusing_criterion} holds. 
		
		Since $\mu(\pi(\cC))>\eps$, there exists $i_0\in\bbN$ such that for all $i\geq i_0$, $g_{t_i}\lambda_{a,b}(\cO)>\eps$. So for $i\geq i_0$, $t=t_i$ and $J=I$ condition~(1) of the conclusion of \Cref{prop:nonfocusing_criterion} fails to hold, and hence condition~(2) of the conclusion holds for some $\gamma_i\in\Gamma$; that is,
		\begin{equation*}
		    \sup_{s\in I}\norm{g_{t_i}\phi_{a,b}(s)(\gamma_i p_H)}\leq R_1.
		\end{equation*}
		Therefore \eqref{eq:bounded} follows from \Cref{prop:consequence_of_linear_focusing} for a choice of $R>0$ depending on $p_H\in V(\bbQ)\setminus\{0\}$ and $R_1>0$. 
	\end{proof}

	\begin{prop}\label{prop:Ave-equidistribution}
		Let $\mu$ be a weak-* limit of $\mu_i:=(1/T_i)\int_{0}^{T_i}g_{t_i}\lambda_{a,b}\,\mathrm{d}t$ for a sequence $T_i\to\infty$; here $0\leq \mu(X)\leq 1$. Suppose $\mu \ne \mu_X$. 
		Then there exists $R>0$ such that 
		\begin{equation} \label{eq:Ave-bounded}
		   \liminf_{I\to\infty} \frac{\abs{\cI_R\cap[0,T_i]}}{T_i}>0,
		\end{equation}
		where $\cI_{R}$ is defined in \eqref{eq:defOfIR}.
	\end{prop}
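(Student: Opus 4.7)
The plan is to mirror the argument of \Cref{prop:equidistribution}, and add the extra step of extracting from each averaged statement about $\mu_i$ a positive-density statement about single times $t\in[0,T_i]$. Throughout, $\mu$ is $W$-invariant by \Cref{prop:Ave-W-inv}, which opens the door to the Ratner/linearization machinery of \Cref{sect:linearization}.

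I split into two cases. First suppose $\mu$ is not a probability measure on $X$, so that $\mu(X)<1-2\eps$ for some $\eps>0$. Fix an arbitrary $R>0$ and let $K\subset X$ and $t_I\geq0$ be given by \Cref{thm:KMnondivergence} for parameters $\eps/2$ and $R$. Since $\mu(K)\leq\mu(X)<1-2\eps$, for all large $i$ one has $\mu_i(X\setminus K)>\eps$, i.e.\ $T_i^{-1}\int_0^{T_i}g_t\lambda_{a,b}(X\setminus K)\,\mathrm{d}t>\eps$. Because the integrand is bounded by $1$, a Chebyshev-type estimate yields
\[
\frac{|A_i|}{T_i}\geq\frac{\eps/2}{1-\eps/2}\geq\frac{\eps}{2},\qquad A_i:=\{t\in[0,T_i]:g_t\lambda_{a,b}(X\setminus K)>\eps/2\}.
\]
For each $t\in A_i$ with $t\geq t_I$, possibility (1) of \Cref{thm:KMnondivergence} fails, hence its possibility (2) forces $t\in\cI_R$. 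Thus $\liminf_i|\cI_R\cap[0,T_i]|/T_i\geq\eps/2$.

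Next suppose $\mu$ is a probability measure with $\mu\neq\mu_X$. By \Cref{thm:Ratner} I will obtain $H\in\cH$ with $H\subsetneq G$ (otherwise the ``further'' clause of Ratner's theorem would force $\mu=\mu_X$) such that $\mu(\pi(N(H,W)))>0$ and $\mu(\pi(S(H,W)))=0$. I then pick a compact $\cC\subset N(H,W)\setminus S(H,W)$ and $\eps'>0$ with $\mu(\pi(\cC))>\eps'$, and apply \Cref{prop:nonfocusing_criterion} with parameter $\eps'/2$ to produce a compact $\cD\subset\cA$; I take $\Phi$ to be an open norm-ball of radius $R_1$ containing $\cD$, and obtain the corresponding neighborhood $\cO\supset\pi(\cC)$. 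Since $\cO$ is open and $\mu_i\to\mu$ weakly, $\liminf_i\mu_i(\cO)\geq\mu(\pi(\cC))>\eps'$, so the same Chebyshev argument delivers $|B_i|/T_i\geq\eps'/2$ for large $i$, where $B_i:=\{t\in[0,T_i]:g_t\lambda_{a,b}(\cO)>\eps'/2\}$.

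For each $t\in B_i$ one has $|\{s\in I:g_t\phi_{a,b}(s)x_0\in\cO\}|>(\eps'/2)|I|$, so possibility (1) of \Cref{prop:nonfocusing_criterion} fails with $J=I$, producing $\gamma_t\in\Gamma$ with $\sup_{s\in I}\norm{g_t\phi_{a,b}(s)\gamma_tp_H}\leq R_1$. Because $\fg=\mathfrak{sl}_3$ is simple and $H\ne G$, $\mathfrak{h}$ is not $\Ad(G)$-invariant, so $p_H\in V(\bbQ)\setminus\{0\}$ is not $G$-fixed; \Cref{prop:consequence_of_linear_focusing} will then produce $R>0$ and $t_0\geq0$ such that for every $t>t_0$ in $B_i$ there exists $v\in\bbZ^3\setminus\{0\}$ with $\sup_{s\in I}\norm{g_t\phi_{a,b}(s)v}\leq R$; in particular $t\in\cI_R$. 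Hence $|\cI_R\cap[0,T_i]|/T_i\geq\eps'/2-t_0/T_i$, and $\liminf_i|\cI_R\cap[0,T_i]|/T_i\geq\eps'/2>0$. The main obstacle beyond invoking the earlier machinery is precisely the passage from the averaged positivity of $\mu_i(\cO)$ to a positive-density set of individual times $t$ with $g_t\lambda_{a,b}(\cO)$ bounded below, which is exactly what the Chebyshev-type estimate accomplishes.
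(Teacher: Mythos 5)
Your proof is correct and follows essentially the same two-case structure as the paper's: split on whether $\mu$ is a probability measure, apply \Cref{thm:KMnondivergence} (resp.\ Ratner plus \Cref{prop:nonfocusing_criterion} and \Cref{prop:consequence_of_linear_focusing}), and use a Chebyshev-type estimate to upgrade the averaged inequality $\mu_i(\cdot)>\eps$ to a positive-density set of individual times. You additionally spell out two small points the paper leaves implicit — the finite thresholds $t_I$ and $t_0$ contribute vanishing density, and $p_H$ is not $G$-fixed because $\mathfrak{sl}_3$ is simple and $\{e\}\subsetneq H\subsetneq G$ — which are correct and harmless refinements.
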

	
	\begin{proof} There are two possibilities: $\mu(X)<1$, or $\mu$ is a probability measure which is not $G$-invariant. By \Cref{prop:Ave-W-inv}, $\mu$ is $W$-invariant. So there exists $\eps>0$ such that one of the following two possibilities occur:
	\begin{enumerate}
\item[(i)] $\mu(X)<1-\eps$;
\item[(ii)] or, by \Cref{thm:Ratner}, there exists $H\in\cH$  with 
	$H\neq G$ and a compact set $\cC\subset N(H,W)\setminus S(H,W)$ such that $\mu(\pi(\cC))>\eps$. 
	\end{enumerate}
	
	First suppose that possibility (i) occurs. Take any $R>0$ and pick a compact $K\subset X$ given by \Cref{thm:KMnondivergence} for $\eps/2$ in place of $\eps$.
	Then, for each non-negative $t\not\in \cI_R$, by definition~\eqref{eq:defOfIR}, the possibility (2) of \Cref{thm:KMnondivergence} does not hold, and hence its possibility (1) must hold; that is, $g_t\lambda_{a,b}(K)\geq 1-\eps/2$. Write $\kappa_i=\abs{\cI_R\cap[0,T_i]}/{T_i}$. So for all large $i$,
	\[
	(1-\kappa_i)(1-\eps/2)\leq \frac{1}{T_i}\int_{0}^{T_i} g_t\lambda_{a,b}(K)\,\mathrm{d}t \leq \mu_i(X)< 1-\eps;
	\]
	 and hence $\kappa_i>\eps/2$. So \eqref{eq:Ave-bounded} holds.
	
	Now suppose possibility (ii) occurs. Then for any open neighborhood $\cO$ of $\pi(\cC)$, $\mu(\cO)>\eps$, and so for all large $i$, 
	\[
	\frac{1}{T_i}\int_{0}^{T_i} g_t\lambda_{a,b}(\cO)\,\mathrm{d}t > \eps.
	\]
	Let 
	\[
	\cI_\cO=\{t\in[0,\infty): g_t\lambda_{a,b}(\cO)>\eps/2\} \text{ and }
	\kappa_i=\frac{\abs{\cI_{\cO}\cap [0,T_i]}}{T_i}.
	\]
	Then for all large $i$, 
	\[
	(1-\kappa_i)\eps/2+\kappa_i\geq \frac{1}{T_i}\int_{0}^{T_i} g_t\lambda_{a,b}(\cO)\,\mathrm{d}t>\eps,
	\]
	and hence $\kappa_i> \eps/2$.
	
	By \Cref{prop:nonfocusing_criterion} applied to the set $\cC\subset N(H,W)\setminus S(H,W)$ and $\eps/2$ in place of $\eps$, we obtain a compact set $\cD\subset \cA$. Pick $R_1>0$ such that $\cD$ is contained in the open norm-ball of radius $R_1$ in $V$, denoted $\Phi$, and obtain a neighborhood $\cO$ of $\pi(\cC)$ in $G/\Gamma$ so that the conclusion of \Cref{prop:nonfocusing_criterion} holds. 
	
	Now suppose $t\in \cI_{\cO}$. Then for $J=I$, the condition (1) of \Cref{prop:nonfocusing_criterion} fails to hold, so condition (2) of  \Cref{prop:nonfocusing_criterion} holds: there exists $\gamma\in\Gamma$ such that  
	\begin{equation} \label{eq:bndC}
	    \sup_{s\in I} \norm{g_t\phi_{a,b}(s)\gamma p_H}<R_1.
	\end{equation}
	Let $R>0$ be the quantity given by \Cref{prop:consequence_of_linear_focusing} applied to $v_0=p_H\in V(\bbQ)\setminus\{0\}$ and $C=R_1$.
	Under \eqref{eq:bndC}, \Cref{prop:consequence_of_linear_focusing} shows that there exists $v\in\bbZ^3\setminus\{0\}$ such that $\sup_{s\in I} \norm{g_t\phi_{a,b}(s)v}\leq R$; that is, $t\in I_R$.
	Thus $I_\cO\subset I_R$. Therefore 
	\[
	\abs{I_R\cap[0,T_i]}/T_i\geq \kappa_i> \eps/2
	\]
	for all large $i$, and \eqref{eq:Ave-bounded} follows.
	\end{proof}
	
	\section{Proof of {Theorem~\ref{thm:main_thm_equidistribution} and Theorem~\ref{thm:main_thm_sequence}}}
	
	In this section we prove \Cref{thm:main_thm_equidistribution} and \Cref{thm:main_thm_sequence}.
	
	\begin{proof}[\bf Proof of \Cref{thm:main_thm_equidistribution}]
First suppose that the $g_t$-translates of $\lambda_{a,b}$ do \emph{not} get equidistributed in $X$ as $t\to\infty$. Then there exist $t_i\to\infty$ such that $g_{t_i}\lambda_{a,b}$ weak-* converge to a measure which is not $\mu_{X}$. So by \Cref{prop:equidistribution} we have $(a,b)\in\Wtwo$. 
		
		Conversely, suppose $(a,b)\in \Wtwo$. We want to show that the $g_t$-translates of $\lambda_{a,b}$ do \emph{not} get equidistributed in $X$.
		
		By \Cref{lem:interpretation_of_Wtwo}, there exist $R\geq 1$, $t_i\to\infty$ and $\{\gamma_i\}\subset \Gamma$ such that for all $i\in\bbN$,
		\begin{equation} \label{eq:temp11} 
		\sup_{s\in I}\norm{g_{t_i}\phi_{a,b}(s)\gamma_ie_1}\leq R.
		\end{equation}
		
		\subsubsection*{Case 1:}  Suppose there exists $c>0$ such that for all $i\geq1$ and all $s\in I$,
		\[
		\norm{g_{t_i}\phi_{a,b}(s)\gamma_ie_1}\geq c.
		\]
		Then by \Cref{prop:consequence_of_divergence}, after passing to a subsequence, we may assume that $g_{t_i}\lambda_{a,b}$ weak-* converge to a probability measure $\mu$. It suffices to show that the support of $\mu$ is not full.
		
		Let $E$ denote the set of unimodular lattices in $\bbR^3$ containing a primitive vector whose (sup)norm is in the interval $[c,R]$. Then $E$ is closed and contains the support of each $g_{t_i}\lambda_{a,b}$. Therefore the support of $\mu$ is also contained in $E$. But $X\setminus E$ is a nonempty open set, as $E$ does not contain the unimodular lattice $\bbZ M^{-2}e_1 + \bbZ Me_2 + \bbZ Me_3$, for any $M>R$ such that $M^{-2}<c$. Thus the support of $\mu$ is not full. 
		\subsubsection*{Case 2:} Suppose Case~1 does not occur. Then
		after passing to a subsequence, there exists a sequence $\{s_i\}\subset I$ such that
		\begin{equation} \label{eq:temp12b}
		\norm{g_{t_i}\phi_{a,b}(s_i)\gamma_ie_1}=c_i\to0 \text{ as $i\to\infty$}.
		\end{equation}
		We write $\gamma_ie_1=\begin{pmatrix}p_{1,i}\\ p_{2,i}\\ q_i\end{pmatrix}\in\bbZ^3$. Then for all $s\in I$ and $t\in\R$
		\begin{equation} \label{eq:coordinates1}
		g_t\phi_{a,b}(s)\gamma_ie_1=\begin{pmatrix}e^{2t}x_1(s)\\ e^{-t}p_{2,i}\\ e^{-t}q_i
		\end{pmatrix},
		\end{equation}
		where $x_1(s)=(aq_i+p_{2,i})s+(bq_i+p_{1,i})$.
		So by \eqref{eq:temp12b}, $e^{-t_i}\abs{p_{2,i}}\leq c_i$ and $e^{-t_i}\abs{q_i}\leq c_i$, and by \eqref{eq:temp11}, $\abs{e^{2t_i}x_1(s)}\leq R$. 
		We note that if $p_{2,i}=0$ and $q_i=0$, then $\abs{x_1(s)}=\abs{p_{1,i}}\geq 1$, and hence $e^{2t_i}\leq R$ for all large $i$, which is absurd. Hence $$c_ie^{t_i}\geq \max\{\abs{p_{2,i}},\abs{q_i}\}\geq 1;$$ that is, $t_i+\log c_i\geq 0$ for all $i$. 
		Let $t_i':=t_i+(1/3)\log c_i\geq -(2/3)\log c_i$, so $t_i'\to\infty$.
		By \eqref{eq:coordinates1}, for all $s\in I$,
		\begin{align*} 
		\norm{g_{t_i'}\phi_{a,b}(s)\gamma_ie_1}&\leq\max\left\{
		\abs{c_i^{2/3}e^{2t_i}x_1(s)},\abs{c_i^{-1/3}e^{-t_i}p_{2,i}},\abs{c_i^{-1/3}e^{-t_i}q_i}\right\}\\
		&\leq \max\left\{Rc_i^{2/3},c_i^{-1/3}c_i\right\}\leq Rc_i^{2/3}.
		\end{align*}
		Since $c_i\to 0$, by \Cref{prop:consequence_of_divergence},
		for any compact $K\subset X$ we have $g_{t_i'}\lambda_{a,b}(K)=0$ for all large $i$.
		\end{proof}
	
	\begin{proof}[\bf Proof of \Cref{thm:main_thm_sequence}]
(1)$\Rightarrow$(2): Assume $(a,b)\notin\bbQ^2$. Then (3) of \Cref{lem:inter_of_Q}  fails to hold, so there exists a sequence $t_i\to\infty$ such that for every sequence $\{v_i\}\subset \bbZ^3\setminus\{0\}$,
		\begin{equation*} 
		\sup_{s\in I} \norm{g_{t_i}\phi_{a,b}(s)v_i}\to\infty \text{ as $i\to\infty$}.
		\end{equation*}
		So by \Cref{prop:equidistribution}, we conclude that $g_{t_i}\lambda_{a,b}\stackrel{\text{weak-*}}{\longrightarrow}\mu_{X}$.
 
 (2)$\Rightarrow$(1): If $(a,b)\in \bbQ^2$, then by Remark~\ref{rem:Q-diverge}, the translated measure $g_t\lambda_{a,b}$ escapes to $\infty$ as $t\to\infty$. Therefore (2) fails to hold. 
 
Thus (1) and (2) are equivalent. Next we will prove that (1)$\Rightarrow$(3) and (3)$\Rightarrow$(1).

		(1)$\Rightarrow$(3):  
		We assume (1) and argue by contradiction, supposing that the set
		\[
		E=\left\{ s\in \bbR : \{ g_t\phi_{a,b}(s)x_0 \}_{t\geq 0} \text{ is not dense in } X \right\}
		\]
		has positive Lebesgue measure. We take a countable topological basis $\{B_i\}_{i\in\bbN}$ of $X$ consisting of non-empty open subsets, and let
		\[
		E_i=\left\{ s\in\bbR : \{ g_t\phi_{a,b}(s)x_0 \}_{t\geq 0}\cap B_i = \emptyset \right\}.
		\]
		One has $E=\bigcup_{i\in\bbN}E_i$. Since $\abs{E}>0$, there exists $i_0\in\bbN$ such that $\abs{E_{i_0}}>0$. Without loss of generality we may assume that $\abs{E_1}>0$. By the Lebesgue density theorem, there exists a compact interval $I\subset \bbR$ with non-empty interior such that
		\begin{equation}\label{eq:LebesgueDensity}
		\frac{\abs{I\cap E_1}}{\abs{I}}\geq 1-\frac{\mu_{X}(B_1)}{2},
		\end{equation}
		as $\mu_X(B_1)>0$. Because we assumed (1), and since we have proved that (1)$\Rightarrow$(2),  there exists $t_i\to\infty$ such that $g_{t_i}\lambda_{a,b}\to\mu_X$ in the weak-* topology. Since $B_1$ is non-empty and open, for all large $i$,
		\[
		\frac{1}{\abs{I}}\abs{\{s\in I : g_{t_i}\phi_{a,b}(s)x_0\in B_1 \}} > \frac{\mu_{X}(B_1)}{2},
		\]
		which, by the definition of $E_1$, implies that
		\[
		\frac{\abs{I\setminus E_1}}{\abs{I}}> \frac{\mu_{X}(B_1)}{2}.
		\]
		This contradicts \eqref{eq:LebesgueDensity}. Hence we must have $\abs{E}=0$.
		
		(3)$\Rightarrow$(1): To prove this by contraposition, suppose that $(a,b)\in\bbQ^2$. Let $B\subset X$ be a non-empty relatively compact open set. By Remark~\ref{rem:Q-diverge}, there exists $t_0>0$ such that $g_t\{\phi_{a,b}(s)x_0:s\in \R\}\cap B=\emptyset$ for all $t>t_0$. If $q\in\bbN$ be such that $qa\in\bbZ$, then $\phi_{a,b}(s+q)\bbZ^3=\phi_{a,b}(s)\bbZ^3$ for all $s\in\R$. Therefore $\{\phi_{a,b}(s)x_0:s\in\R\}$ is compact. So $C:=\cup_{0\leq t\leq t_0}g_t\{\phi_{a,b}(s)x_0:s\in\R\}$ is a compact subset of a $2$-dimensional submanifold of $X$. So $B\setminus C$ is a non-empty open subset of $X$. Therefore for every $s\in\bbR$, 
		\[
		\{g_t\phi_{a,b}(s)x_0:t\geq 0\}\cap (B\setminus C)=\emptyset;
		\]
		in particular, $\{g_t\phi_{a,b}(s)x_0:t\geq 0\}$ is not dense in $X$. So (3) fails to hold.
	\end{proof}
	
	\section{Behavior on average --- Proofs of {Lemma~\ref{lem:density}} and  {Theorem~\ref{thm:main_thm_average}}}
	\label{sec:averages}
	In this section, we discuss the averages of the $g_t$-translates, and prove \Cref{thm:main_thm_average}. As \Cref{lem:density} will be used in the proof of \Cref{thm:main_thm_average}, we first provide its proof. 
	
	\begin{proof}[\bf Proof of \Cref{lem:density}]
	(1)$\Rightarrow$(2) is obvious.
		
		To prove that (2)$\Rightarrow$(3), we pick $R\geq 1$ such that \eqref{eq:positive_upper_density} holds. 
		For any 
		 $v=\begin{pmatrix}p_1\\ p_2\\ q\end{pmatrix}\in \bbZ^3\setminus\{0\}$ and $t\geq 0$, by \eqref{eq:temp6b} we get
		\begin{equation} \label{eq:temp6b3}
		\sup_{s\in I}\norm{g_t\phi_{a,b}(s)v} < R \Rightarrow
		\begin{cases}
		\abs{qb+p_{1}}< R_1e^{-2t} \\
		\abs{qa+p_{2}}<R_1e^{-2t} \\
		\abs{q}< Re^{t}
		\end{cases}\Rightarrow 
		\begin{cases}
		\lip q\begin{pmatrix}
		b\\a \end{pmatrix}\rip<R_1e^{-2t}\\
		\abs{q}< Re^{t},
		\end{cases}
		\end{equation}
		where $R_1=\norm{
		\begin{pmatrix}
		1 & s_0 \\ 1 & s_1
		\end{pmatrix}^{-1}}R\geq R$, and 
		$\lip \begin{pmatrix}
	x_1\\x_2
	\end{pmatrix} \rip$ denotes the sup-norm distance between $\begin{pmatrix}
	x_1\\x_2
	\end{pmatrix}$ and its nearest integral vector. Note that if $q=0$ in \eqref{eq:temp6b3}, then $t<(1/2)\log R_1$. 
	
	For each $q\in\bbN$, define
		\begin{align}
		E_q&=\{t>0: e^{-t}q<R \text{ and } e^{2t}\lip q\begin{pmatrix}
		b\\a \end{pmatrix}\rip<R_1 \} \label{eq:definitionOfEq}\\
		&=\bigl\{t>0: R^{-1}q<e^t<R_1^{1/2}\lip q\begin{pmatrix}
		b\\a \end{pmatrix}\rip^{-1/2}\bigr\} \nonumber\\
		&=\bigl(\log q-\log R, -\frac{1}{2}\log \lip q\begin{pmatrix}
		b\\a
		\end{pmatrix}\rip + \frac{1}{2}\log R_1\bigr)\cap (0,\infty). \nonumber
		\end{align}
		Now $t\in E_q$ if and only if the right-most term of \eqref{eq:temp6b3} holds, so
		\begin{equation} \label{eq:Eqnonempty}
		    E_q\neq\emptyset \iff
		\lip q\begin{pmatrix}
		b\\a
		\end{pmatrix}\rip < R_1(Rq^{-1})^2.
		\end{equation}
		
		Let $\cP(\bbZ^3)$ denote the set of primitive integral vectors in $\bbZ^{3}$.  From \eqref{eq:defOfIR}, note that
		\begin{equation} \label{eq:definitionOfIR}
		\cI_{R}=\left\{ t\in[0,+\infty) : \sup_{s\in I}\norm{g_t\phi_{a,b}(s)v} < R, \text{for some } v\in\cP(\bbZ^3)\setminus\{0\} \right\}.
		\end{equation}
		Let $\cQ$ be the collection of 
		$q\in\bbN$ such that $E_q\neq\emptyset$, and 
		\[
		\lip q\begin{pmatrix}
		b\\a \end{pmatrix}\rip = \max\{\abs{qb+p_1},\abs{qa+p_2}\} \text{ for some } p_1,p_2\in\bbZ \text{ such that } \begin{pmatrix}p_1\\p_2\\q\end{pmatrix}\in \cP(\bbZ^3).  
		\]
		Let $t_0=(1/2)\log R_1$. Then by \eqref{eq:definitionOfIR}, 
		\eqref{eq:temp6b3} and \eqref{eq:definitionOfEq} we get
		\begin{equation} \label{eq:inUnionOfPrimitiveEq}
		\cI_R\cap [t_0,\infty]\subset\bigcup_{q\in \cQ}E_q.
		\end{equation}
		
		We may assume that $(a,b)\not\in\bbQ^2$, because otherwise $(a,b)\in \Wtwoo$ and we are done. Then for any $q\in\cQ$, $E_q$ is a finite interval. Therefore by \eqref{eq:positive_upper_density}, $\cQ$ is infinite. We write $\cQ=\{q_1,q_2,\dots\}$, where $q_i<q_{i+1}$ for all $i$. 
		
		\subsubsection*{Claim 1} {\it If $E_q$ and $E_{q'}$ are both non-empty for some $q,q'\in\cQ$ and $q'>q$, then $q^2<Cq'$, where $C=2R_1R^2\geq 2$. In particular, for any $n\in\bbN$,
		\begin{equation} \label{eq:growth}
		    \log q_{i}<\log C+2^{-(n-i)}\log q_n,\,\forall i<n.
		\end{equation}
		}
		
		Indeed, by definition of $\cQ$ and \eqref{eq:Eqnonempty}, there exist $\begin{pmatrix}
		p_1\\p_2\\q
		\end{pmatrix}, \begin{pmatrix}
	p_1'\\p_2'\\q'
		\end{pmatrix}\in \cP(\bbZ^3)$ such that 
		\begin{equation*}
		\norm{q\left(\begin{matrix}
			b\\a
			\end{matrix}\right)+\left(\begin{matrix}
			p_1\\p_2
			\end{matrix}\right)}<R_1R^2q^{-2} \text{ and }
		\norm{q'\left(\begin{matrix}
			b\\a
			\end{matrix}\right)+\left(\begin{matrix}
			p'_1\\p'_2
			\end{matrix}\right)}<R_1R^2 q'^{-2}.
		\end{equation*}
		 By primitivity, $\frac{1}{q}\begin{pmatrix} p_1\\p_2\end{pmatrix}\neq \frac{1}{q'}\begin{pmatrix}
		 p_1'\\p_2'
		 \end{pmatrix}$. Hence, by triangular inequality,
		\begin{equation*}
		\frac{1}{qq'}\leq\norm{\frac{1}{q}\left(\begin{matrix}
		p_1\\p_2
		\end{matrix}\right)-\frac{1}{q'}\left(\begin{matrix}
		p_1'\\p'_2
		\end{matrix}\right)}\leq R_1R^2(q^{-3}+q'^{-3})< (2R_1R^2)q^{-3}. 
		\end{equation*}
		Therefore $q^2<(2R_1R^2)q'$. This proves the first part of the claim. 
		
		For the second assertion of the claim, we iteratively apply the inequality $q^2<Cq'$ to $q=q_j$ and $q'=q_{j+1}$, for $j=i,\dots,n-1$ to get
		\begin{align*}
		\log q_i & < \frac{\log C}{2} + \frac{\log q_{i+1}}{2}\\
		& < \frac{\log C}{2} +\frac{\log C}{4} + \frac{\log q_{i+2}}{4}\\
		& < \dots \\
		& < \log C + 2^{-(n-i)}\log q_n.
		\end{align*}
		
		Next, in view of \eqref{eq:positive_upper_density} and \eqref{eq:inUnionOfPrimitiveEq}, to achieve the quantity $\limsup_{T\to\infty}\frac{\abs{[0,T]\cap \bigcup_{q\in\cQ}E_{q}}}{T}$, it is enough to let $T$ vary along the sequence $\{T_n\}$ of right endpoints of intervals $E_{q_n}$, which, by \eqref{eq:definitionOfEq}, can be rewritten $T_n=\log q_n-\log R+\abs{E_{q_n}}$.
		Then,
		\[
		\frac{\abs{[0,T_n]\cap \bigcup_{q\in\cQ}E_{q}}}{T_n} \leq \frac{\sum_{i=1}^n\abs{E_{q_i}}}{\log q_n-\log R+\abs{E_{q_n}}}.
		\]
		Therefore we infer from \eqref{eq:positive_upper_density} and \eqref{eq:inUnionOfPrimitiveEq} that
		\begin{equation*}
		\limsup_{n\to\infty}\frac{\sum_{i=1}^n\abs{E_{q_i}}}{\log q_n-\log R+\abs{E_{q_n}}} > 0.
		\end{equation*}
		It follows that there exists $\eps>0$ such that
		\begin{equation}\label{eq:positiveUpperDensity}
		\limsup_{n\to\infty}\frac{\sum_{i=1}^n\abs{E_{q_i}}}{\log q_n} = 4\eps > 0.
		\end{equation}
		
		\subsubsection*{Claim 2}{\it We claim that
		\begin{equation} \label{eq:positiveUpperDensityEn}
		\limsup_{n\to\infty}\frac{\abs{E_{q_n}}}{\log q_n} > \eps.
		\end{equation}}
		
		Indeed, suppose $\limsup_{n\to\infty}\frac{\abs{E_{q_n}}}{\log q_n} \leq \eps$. Then there exists $N>0$ such that $q_N>C$ and for all $n\geq N$, $\abs{E_{q_n}}<2\eps\log q_n$.  Therefore
		\begin{equation*}
		\begin{split}
		&\limsup_{n\to\infty}\frac{\sum_{i=1}^n\abs{E_{q_i}}}{\log q_n}=\limsup_{n\to\infty}\frac{\sum_{i=N}^n\abs{E_{q_i}}}{\log q_n}
		<\limsup_{n\to\infty}\frac{\sum_{i=N}^n2\eps\log q_i}{\log q_n}\\
		{\leq}&\limsup_{n\to\infty}\frac{2\eps(n-N)\log C+2\eps\sum_{i=N}^n2^{-(n-i)}\log q_n}{\log q_n}= 0+2\epsilon\sum_{i=N}^n 2^{-(n-i)}<4\epsilon,
		\end{split}
		\end{equation*}
		because by \eqref{eq:growth}, for any $i< n$,
		\[
		\log q_i< \log C+2^{-(n-i)}\log q_n \text{ and } \log q_n>2^{(n-N)}(\log q_N-\log C).
		\]
		This contradicts \eqref{eq:positiveUpperDensity}, and proves Claim~2.
		
		\bigskip
		Now in view of \eqref{eq:positiveUpperDensityEn}, for any $Q>0$ there exists   $q>Q$ such that $\abs{E_q}>\eps\log q$. By \eqref{eq:definitionOfEq}, this means
		\begin{equation*}
		(1/2)\log R_1+\log R - \log q - \frac{1}{2}\log \lip q\begin{pmatrix}
		b\\a
		\end{pmatrix}\rip >\eps\log q,
		\end{equation*}
		or equivalently,
		\[
		\lip q\begin{pmatrix}
		b\\a
		\end{pmatrix}\rip < R_1R^2 q^{-(2+2\eps)}.
		\]
		Hence $\lip q\begin{pmatrix}
		b\\a
		\end{pmatrix}\rip\leq q^{-(2+\eps)}$ has infinitely many solutions $q\in\bbN$, which means  $(a,b)\in\Wtwoo$. This proves (2)$\Rightarrow$(3). 
		
		\bigskip
		Now to prove (3)$\Rightarrow$(1), suppose that $(a,b)\in\Wtwoo$. Then there exists $\eps>0$ and an increasing sequence $\{q_n\}_{n\in\bbN}$ of positive integers such that
		\begin{equation}\label{eq:Wtwoo2}
		\begin{cases}
		\abs{q_nb+p_{1,n}}\leq q_n^{-(2+\eps)} \\
		\abs{q_na+p_{2,n}}\leq q_n^{-(2+\eps)}
		\end{cases},\text{ for some }p_{1,n},p_{2,n}\in\bbZ.
		\end{equation}
		For each $n\in\bbN$, pick $v_n=\begin{pmatrix}
		p_{1,n}\\p_{2,n}\\q_n
		\end{pmatrix}\in\bbZ^3$ such that \eqref{eq:Wtwoo2} holds. For any $t\in\bbR$,
		\begin{equation} \label{eq:temp6a2}
		g_{t}\phi_{a,b}(s)v_n=\begin{pmatrix}
		e^{2t}\bigl((bq_n+p_{1,n})+(aq_n+p_{2,n})s\bigr)\\e^{-t}p_{2,n}\\e^{-t}q_n
		\end{pmatrix}.
		\end{equation}
		Pick any constants $0<c_1<c_2<1/2$, independent of $n$. Let 
		\[
		t\in[(1+c_1\eps)\log q_n,(1+c_2\eps)\log q_n].
		\]
		Then
		\begin{equation*}
		q_n^{-(2+\eps)}\leq e^{-\bigl(\frac{(1-2c_2)\eps}{1+c_2\eps}\bigr) t}e^{-2t} \text{ and } q_n\leq e^{-\bigl(\frac{c_1\eps}{1+c_1\eps}\bigr) t}e^{t}.
		\end{equation*}
		By \eqref{eq:Wtwoo2},
		\[
		\abs{p_{2,n}}\leq q_n^{-(2+\eps)}+\abs{a}q_n\leq 1+\abs{a}q_n
		\]
		and so, by \eqref{eq:temp6a2}, 
		\begin{equation}\label{eq:temp10}
		\norm{g_t\phi_{a,b}(s)v_n}\leq  C_1e^{-\eps_1 t}, \,\forall s\in I=[s_0,s_1],
		\end{equation}
		where $\eps_1:=\min\left\{\frac{c_1\eps}{1+c_1\eps},\frac{(1-2c_2)\eps}{1+c_2\eps}\right\}>0$ and $C_1:=(1+\abs{s_0}+\abs{s_1}+\abs{a})$.
		
		Given any $R>0$, let $N>0$ such that for every $n>N$, 
		\[
		C_1q_n^{-(1+c_1\eps)\eps_1}<R.
		\]		
		For $n>N$, by \eqref{eq:temp10}, one has $[(1+c_1\eps)\log q_n,(1+c_2\eps)\log q_n]\subset \cI_{R}$.
		So, setting $T_n=(1+c_2\eps)\log q_n$, we get 
		\[
		\frac{\abs{\cI_{R}\cap [0,T_{n}]}}{T_{n}}\geq \frac{\abs{[(1+c_1\eps)\log q_n,(1+c_2\eps)\log q_n]}}{T_{n}}=\frac{(c_{2}-c_{1})\eps}{1+c_2\eps}.
		\]
		Therefore
		\[
		\limsup_{T\to\infty}\frac{\abs{\cI_R\cap[0,T]}}{T}\geq \frac{(c_{2}-c_{1})\eps}{1+c_2\eps}>0.
		\]
		This proves that (3)$\Rightarrow$(1). 
	\end{proof}
	
	Now we are ready to prove \Cref{thm:main_thm_average}.
	\begin{proof}[\bf Proof of \Cref{thm:main_thm_average}]
		 (1)$\Rightarrow$(2) is obvious.
		
		To prove (2)$\Rightarrow$(3) by contrapositive, suppose that $(a,b)\in\Wtwoo$. Let $K$ be a compact subset of $X$, which, as we may recall,  is identified with the space of unimodular lattices in $\bbR^{3}$. By Mahler's criterion, there exists $R>0$ such that every nonzero vector in any lattice in $K$ has norm at least $R$.
		So, by \eqref{eq:defOfIR},  for any $t\in \cI_{R}$, we have $g_{t}\phi_{{a,b}}(s)\bbZ^{n}\notin K$ for all $s\in I$; in particular, 
		\begin{equation} \label{eq:gtlambdaK}
		g_{t}\lambda_{{a,b}}(K)=0.
		\end{equation} 
		
		Since $(a,b)\in\Wtwoo$, \Cref{lem:density} shows that there exists a sequence $T_n\to\infty$ and an $\eps>0$ such that for all $n$,
		\[
		\frac{\abs{\cI_{R}\cap [0,T_{n}]}}{T_{n}}\geq \epsilon,
		\]
		and hence, by \eqref{eq:gtlambdaK},
		\[
		\frac{1}{T_{n}}\int_{0}^{T_{n}}g_{t}\lambda_{{a,b}}(K)\,\mathrm{d}t\leq 1-\epsilon, 
		\]
		where the $\epsilon$ is independent of $K$. Thus, the family of averages $\{\frac{1}{T}\int_{0}^{T}g_{t}\lambda_{{a,b}}\,\mathrm{d}t\}_{T>0}$ has escape of mass. This proves that (2)$\Rightarrow$(3).
		
		To prove (3)$\Rightarrow$(1) by contraposition, suppose that (1) fails to hold. Then there exists a sequence $T_i\to\infty$ such that $\mu_i:=(1/T_i)\int_{0}^{T_i}g_t\lambda_{a,b}\,\mathrm{d}t$ does not converge to 
		$\mu_X$. Since the $\mu_i$ are probability measures, by passing to a subsequence, without loss of generality we may assume that $\mu_i$ converges to a Borel measure $\mu$ on $X$ which is not 
		$\mu_{X}$; here $0\leq \mu(X)\leq 1$. Then by \Cref{prop:Ave-equidistribution} there exists $R>0$ such that 
		\[
		\liminf_{i\to\infty} \frac{\abs{\cI_R\cap [0,T_i]}}{T_i}>0.
		\]
		Then by \Cref{lem:density}, we get $(a,b)\in \Wtwoo$, which contradicts~(3). 
		\end{proof}

\end{document}